\newcommand{\LoadPackagesNow}{}
\newcommand{\LoadPackageLater}[1]{%
   \g@addto@macro{\LoadPackagesNow}{%
      \usepackage{#1}%
   }%
}
\let\chapter\section
\definecolor{pdfurlcolor}{rgb}{0,0,0.6}
\definecolor{pdffilecolor}{rgb}{0.7,0,0}
\definecolor{pdflinkcolor}{rgb}{0,0,0.6}
\definecolor{pdfcitecolor}{rgb}{0,0,0.6}
\g@addto@macro\bfseries{\boldmath}
\newcommand{\ifargdef}[3][{}]{\ifthenelse{\equal{#2}{}}{#1}{#3}}
\newtheoremstyle{claim}
	{\topsep}{\topsep}%
	{\itshape}
	{}
	{\bfseries\boldmath}
	{}
	{.5em}
	{\thmname{#1} \thmnumber{#2} \thmnote{(#3)}}
\newtheoremstyle{definition}
	{\topsep}{\topsep}%
	{}
	{}
	{}
	{}
	{.5em}
	{{\bfseries\thmname{#1} \thmnumber{#2}} \thmnote{(#3)}}
\newtheoremstyle{remark}
	{\topsep}{\topsep}%
	{}
	{}
	{}
	{}
	{.5em}
	{{\itshape\thmname{#1}:}}
\newtheoremstyle{example}
	{\topsep}{\topsep}%
	{}
	{}
	{}
	{}
	{.5em}
	{{\itshape\thmname{#1}:}}
\declaretheorem[style=claim,numberwithin=section]{theorem}
\declaretheorem[style=claim,sibling=theorem]{proposition}
\declaretheorem[style=claim,sibling=theorem]{lemma}
\declaretheorem[style=claim,sibling=theorem]{remark}
\declaretheorem[style=claim,sibling=theorem]{example}
\declaretheorem[style=definition,sibling=theorem]{definition}
\newcommand{\opleft}[1]{\mathopen{}\left#1}
\newcommand{\opright}[1]{\right#1\mathclose{}}
\newcommandx{\braces}[4]{%
\ifstrequal{#3}{normal}{#1#4#2}{%
\ifstrequal{#3}{auto}{\left#1#4\right#2}{%
\ifstrequal{#3}{opauto}{\opleft#1#4\opright#2}{%
#3#1#4#3#2}}}%
}
\newcommandx{\opannot}[3][3=\downarrow]{\stackrel{\mathclap{\substack{#1 \\ #3 \vspace{2pt}}}}{#2}}
\newcommandx{\lineannot}[3][3=\rightarrow]{\mathllap{\boxed{\text{\textsmaller{#1}}} #3} #2}
\newcommandx{\multilineannot}[4][4=\rightarrow]{\mathllap{\boxed{\parbox{#1}{\RaggedRight\textsmaller{#2}}} #4} #3}
\newcommand{\R}{\mathbb{R}} 
\newcommand{\suchthat}[1][normal]{\ifstrequal{#1}{normal}{\mid}{#1|}} 
\newcommand{\dist}[2]{\operatorname{dist}(#1, #2)} 
\newcommandx{\intvcl}[3][1=normal]{\braces{[}{]}{#1}{#2, #3}} 
\newcommandx{\intvop}[3][1=normal]{\braces{(}{)}{#1}{#2, #3}} 
\newcommandx{\intvclop}[3][1=normal]{\braces{[}{)}{#1}{#2, #3}} 
\newcommandx{\intvopcl}[3][1=normal]{\braces{(}{]}{#1}{#2, #3}} 
\DeclareMathOperator*{\argmin}{argmin} 
\DeclareMathOperator*{\argmax}{argmax} 
\DeclareMathOperator*{\convHull}{conv} 
\newcommandx{\abs}[2][1=normal]{\braces{\lvert}{\rvert}{#1}{#2}} 
\newcommandx{\ceil}[2][1=normal]{\braces{\lceil}{\rceil}{#1}{#2}} 
\newcommandx{\floor}[2][1=normal]{\braces{\lfloor}{\rfloor}{#1}{#2}} 
\newcommandx{\round}[2][1=normal]{\braces{[}{]}{#1}{#2}} 
\newcommandx{\der}[1]{D^{#1}} 
\newcommandx{\partder}[4][1={},4={}]{\frac{\partial^{#4} #2}{\partial #3^{#4}}\ifargdef{#1}{\Big|_{#1}}} 
\newcommandx{\integ}[4][1={},2={}]{\int_{#1}^{#2} #3 \, #4} 
\newcommandx{\asympffaster}[2][1=normal]{o\braces{(}{)}{#1}{#2}} 
\newcommandx{\asympfaster}[2][1=normal]{O\braces{(}{)}{#1}{#2}} 
\newcommandx{\asympeq}[2][1=normal]{\Theta\braces{(}{)}{#1}{#2}} 
\newcommandx{\asympsslower}[2][1=normal]{\omega\braces{(}{)}{#1}{#2}} 
\newcommandx{\asympslower}[2][1=normal]{\Omega\braces{(}{)}{#1}{#2}} 
\newcommandx{\norm}[2][1=normal]{\braces{\|}{\|}{#1}{#2}} 
\renewcommandx{\sp}[3][1=normal]{\braces{\langle}{\rangle}{#1}{#2, #3}} 
\newcommand{\adj}[1]{{#1}^\ast} 
\newcommandx{\End}[2][2={}]{\mathcal{L}\opleft( #1 \ifargdef{#2}{, #2} \opright)} 
\DeclareMathOperator{\spann}{\operatorname{span}} 
\newcommandx{\measure}[2][1=normal]{\operatorname{vol}\braces{(}{)}{#1}{#2}} 
\DeclareMathOperator{\supp}{supp} 
\newcommandx{\Leb}[3][1={},3=normal]{L^{#2}\ifargdef{#1}{\braces{(}{)}{#3}{#1}}{}} 
\newcommandx{\Lebnorm}[4][1=normal,3={2},4={}]{\norm[#1]{#2}_{\Leb[#4]{#3}}} 
\renewcommandx{\l}[3][1={},3=normal]{\ell^{#2}\ifargdef{#1}{\braces{(}{)}{#3}{#1}}} 
\newcommandx{\lnorm}[4][1=normal,3={2},4={}]{\norm[#1]{#2}_{\l[#4]{#3}}} 
\newcommandx{\Smooth}[4][1={},3={},4=normal]{C_{#3}^{#2}\ifargdef{#1}{\braces{(}{)}{#4}{#1}}} 
\newcommandx{\Schwartz}[2][1={},2=normal]{\mathscr{S}\ifargdef{#1}{\braces{(}{)}{#2}{#1}}} 
\newcommandx{\Schwartzpoly}[2][1=normal]{\braces{\langle}{\rangle}{#1}{\abs[#1]{#2}} } 
\newcommandx{\Tempdistr}[2][1={},2=normal]{\mathscr{S}'\ifargdef{#1}{\braces{(}{)}{#2}{#1}}} 
\newcommandx{\distrinp}[3][1=normal]{\braces{\langle}{\rangle}{#1}{#2, #3}} 
\newcommand{\Linedistr}[1][]{\mathfrak{L}\ifargdef{#1}{_{#1}}{}} 
\newcommandx{\ft}[3][1=default,2=auto]{
\ifstrequal{#1}{default}{\widehat{#3}}{
\ifstrequal{#1}{long}{{\braces{(}{)}{#2}{#3}}^{\wedge}}{}}} 
\newcommandx{\ift}[3][1=default,2=auto]{
\ifstrequal{#1}{default}{\check{#3}}{
\ifstrequal{#1}{long}{{\braces{(}{)}{#2}{#3}}^{\vee}}{}}} 
\newcommand{\rnd}{\operatorname{round}}
\title{\bfseries Compressed Sensing for Finite-Valued Signals}
\author{\hspace*{-1cm}
  Sandra Keiper$^1$, Gitta Kutyniok$^1$, Dae Gwan Lee$^2$, and G\"otz E. Pfander$^2$\\[.5em]
  \small{\textsc{\hspace*{-1cm}$^1$Technische Universit\"at Berlin, $^2$Philipps-Universit\"at Marburg }}\\[.5em]
}
\begin{document}

\listoftodos
\newcommand{\OpAnalysis}[1]{T_{#1}} 
\newcommand{\OpSynthesis}[1]{\adj T_{#1}} 
\newcommand{\OpFrame}[1]{S_{#1}} 
\newcommand{\defsf}{\varphi} 
\newcommand{\InpSp}{\mathcal{H}} 
\newcommand{\InpSpK}{\InpSp_K} 
\newcommand{\ProK}{P_K} 
\newcommand{\InpSpM}{\InpSp_M} 
\newcommand{\ProM}{P_M} 
\newcommand{\sig}{x^0} 
\newcommand{\sigrec}{x^\star} 
\newcommand{\PF}{\Phi} 
\newcommand{\pf}{\phi} 
\newcommand{\cluster}{\Lambda} 
\newcommand{\concentr}[2]{\kappa\ifargdef{#1}{\opleft( #1, #2 \opright)}} 
\newcommand{\clustercoh}[2]{\mu_c \ifargdef{#1}{( #1 , #2)}} 
\newcommand{\anorm}[2]{\norm{#1}_{1,#2}} 
\newcommand{\ver}{\mathrm{v}} 
\newcommand{\hor}{\mathrm{h}} 
\newcommand{\dir}{\imath} 
\newcommand{\meyerscal}{\phi} 
\newcommand{\Scalfunc}{\Phi} 
\newcommand{\Corofunc}{W} 
\newcommand{\Coro}{\mathscr{K}} 
\newcommand{\conefunc}{v} 
\newcommand{\Conefunc}[1]{V_{(#1)}} 
\newcommand{\Cone}[1]{\mathscr{C}_{(#1)}} 
\newcommand{\pscal}{A} 
\newcommand{\pshear}{S} 
\newcommand{\pscalcone}[2]{A_{#1,(#2)}} 
\newcommand{\shearcone}[1]{S_{(#1)}} 
\newcommand{\unishplain}{\psi} 
\newcommand{\unishplainft}{\ft{\unishplain}} 
\newcommand{\unish}[5][{}]{\unishplain_{#3,#4,#5}^{#2\ifargdef{#1}{,(#1)}}} 
\newcommand{\unishft}[5][{}]{\unishplainft_{#3,#4,#5}^{#2\ifargdef{#1}{,(#1)}}} 
\newcommand{\Scalparamdomain}{\mathsf{A}} 
\newcommand{\aj}{\alpha_j} 
\newcommandx{\Unish}[3][1=\meyerscal,2=\conefunc,3=(\aj)_j]{\operatorname{SH}(#1, #2, #3)} 
\newcommandx{\UnishLow}[1][1=\meyerscal]{\operatorname{SH}_{\mathrm{Low}}(#1)} 
\newcommandx{\UnishInt}[3][1=\meyerscal,2=\conefunc,3=(\aj)_j]{\operatorname{SH}_{\mathrm{Int}}(#1, #2, #3)} 
\newcommandx{\UnishBound}[3][1=\meyerscal,2=\conefunc,3=(\aj)_j]{\operatorname{SH}_{\mathrm{Bound}}(#1, #2, #3)} 
\newcommand{\Unishgroup}{\Gamma} 
\newcommand{\Unishind}{\gamma} 
\newcommand{\lmax}{l_j} 
\newcommand{\weight}{w} 
\newcommand{\wlen}{\rho} 
\newcommand{\model}{{\weight\Linedistr}} 
\newcommand{\modelrec}{\model^\star} 
\newcommand{\Corofilter}{F} 
\newcommand{\mdiam}{h} 
\newcommand{\mask}[1]{\mathscr{M}_{#1}} 
\newcommand{\Unishshort}{\Psi} 
\newcommand{\scalpm}[1]{#1^{\pm1}} 
\newcommand{\translind}[2]{#1^{(#2)}} 
\maketitle
\begin{abstract}
The need of reconstructing discrete-valued sparse signals from few measurements, that is solving  an undetermined system of linear equations, appears frequently in science and engineering. Whereas classical compressed sensing algorithms do not incorporate the additional knowledge of the discrete nature of the signal, classical lattice decoding approaches such as the sphere decoder do not utilize sparsity constraints.

In this work, we present an approach that incorporates a discrete values prior into basis pursuit. In particular, we address unipolar binary and bipolar ternary sparse signals, i.e., sparse signals with entries in $\{0,1\}$, respectively in $\{-1,0,1\}$. We will show that phase transition takes place earlier than when using the classical basis pursuit approach and that, independently of the sparsity of the signal, at most $N/2$, respectively $3N/4$, measurements are necessary to recover a unipolar binary, and a bipolar ternary signal uniquely, where $N$ is the dimension of the ambient space. We will further discuss robustness of the algorithm and generalizations to signals with entries in larger alphabets.
\end{abstract}

\vspace{.1in}
\noindent\textbf{Keywords.}
Compressed Sensing, Sparse Recovery, Null Space Property, Finite Alphabet, Statistical Dimension, Phase Transitions\\
\textbf{AMS classification.} 15A12, 15A60, 15B52, 42A61, 60B20, 90C05, 94A12, 94A20
\vspace{.1in}
%
%
%
%
%


\section{Introduction}
About ten years ago compressed sensing was introduced as an effective tool to acquire signals from an underdetermined system of linear equations, under the additional, by applications motivated, constraint, that the signal possesses a sparse or nearly sparse representation. More precisely, the main objective is to solve the underdetermined system
\begin{align}
Ax=b,
\end{align}
with $A\in \R^{m\times N}$ ($m<N$) and $b\in \R^m$ by using the a priori information that $x$ is $k$-sparse, i.e., at most $k$ entries of $x$ are non-zero.
In this situation, necessary and sufficient conditions, for instance, null space and incoherence properties of the measurement matrix $A$ for the exact recovery of the signal $x$, even when $b$ is contaminated with noise, are known. For a survey, we refer to \cite{DDEK}.

In many applications we can assume a secondary structure constraint besides sparsity, namely that the nonzero entries of $x$ come from a finite or discrete alphabet. Those signals appear, for example, in error correcting codes \cite{CRTV} as well as massive Multiple-Input Multiple-Output (MIMO) channel \cite {RHE} and wideband spectrum sensing \cite{ALLP}. A particular example is given by wireless communications, where the transmitted signals are sequences of bits, i.e., with entries in $\{0,1\}^N$. In this regime, sensor networks have gained some interest over the last years. In those, one observes the behavior that a large number of sensor nodes are either silent or transmit data (often $\pm 1$) to a receiver. If the receiver is equipped with a small number of antennas, the detection of the active sensors and their transmitted data results in an underdetermined inverse problem. However, there also exist several examples of applications, where the transmitted data originate from a general finite set $\mathcal{A}\subset \R$ such as in source decoding or radar.

In the following we will focus on signals with entries from a bounded lattice and show that compressed sensing recovery guarantees for those signals can be improved significantly in some cases.

\subsection{Finite-Valued Sparse Signals}

In this paper we derive recovery guarantees for structured sparse signals from an underdetermined system of linear equations. We will focus on the structural assumption that the entries of the original sparse signal $x\in \R^N$ stem from a finite alphabet $\mathcal{A}$, more precisely, $\mathcal{A} \subset \R$ is a finite set of real numbers. We first consider the cases $\mathcal{A} = \{0,1\}$ and $\mathcal{A}=\{-1,0,1\}$, since those will already illustrate the main concepts and arguments of our approach. Surprisingly, it will turn out that, in particular, the alphabets $\mathcal{A} = \{-1,0,1\}$ and $\{0,1,2\}$ exhibit quite different phenomena due to the positioning of the zero within the set. Certainly, all results hold true for any general finite alphabet, and we will then discuss this situation in the second part of the paper.

For notational purposes, we will call $x$ a \emph{unipolar binary signal}, if $x\in \{0,1\}^N$, a \emph{bipolar ternary signal}, if $x\in \{-1,0,1\}^N$, a \emph{unipolar finite-valued signal}, if, for $L\in \mathbb{Z}$, $x\in \{0,\dots, L\}^N$ and a \emph{bipolar finite-valued signal}, if, for $L_1,L_2\in \mathbb{N}$, $x\in\{-L_1,\dots, L_2\}$. Moreover, throughout our paper, sparsity will be imposed directly on the signal $x$ with respect to the canonical basis of $\mathbb{R}^N$. We may alternatively assume that $x$ possesses a sparse representation, that is
\begin{align}
x=Gv \quad \text{for some} \;\; G\in \R^{N \times N} \;\; \text{and} \;\; \text{a $k$-sparse vector} \;\; v \in \mathcal{A}^N,
\end{align}
in which case we simply replace the measurement matrix $A$ with $AG$. We remark that unipolar binary signals are also considered in the framework of \emph{1-bit compressed sensing} \cite{BB}. However, in this problem complex, the quantized measurement vector $b$ is unipolar binary rather than the signal $x$ itself.


\subsection{Recovery of Finite-Valued Signals using Basis Pursuit}\label{sec:BasPur}

A natural approach to recover sparse signals from an underdetermined linear system is to use $\ell_0$-minimization, i.e., to solve the problem
\begin{align}
 \min\|x\|_0 \text{ subject to } Ax=b  \label{P0}\tag{$P_0$}.
\end{align}
This problem, however, is known to be NP-hard \cite{Natarajan}.

A popular and by now well-understood approach is to relax \eqref{P0} to
\begin{align}
  \min\|x\|_1 \text{ subject to } Ax=b  \label{P1}\tag{$P_1$},
\end{align}
which is known as \emph{basis pursuit} \cite{CDS}. As this problem is convex, it can be solved easily with the help of convex
optimization methods. A necessary and sufficient condition under which $x_0$ is uniquely recovered by basis pursuit is given as follows:
The set of all feasible solutions, $x_0+\ker(A)$, intersects with the set $\{x:\|x\|_1\le \|x_0\|_1\}$ exactly at $x_0$ (cf.~Figure \ref{fig:NSP}).
This condition provides a useful geometric intuition about properties of measurement matrices to ensure uniqueness of the solution.
One of those properties is the so-called \emph{null space property (NSP)} given by
\noeqref{NSP}%
\begin{align}\label{NSP}\tag{NSP}
 \ker(A)\cap \{w\in \R^N: \|w_K\|_1\ge \|w_{K^C}\|_1\}=\{0\}.
\end{align}
It is well-known that if $A$ fulfills the NSP with respect to some subset $K\subset [N]$, where $[N]=\{1,\dots,N\}$, then every signal $x_0$
supported on $K$ is the unique minimizer of \eqref{P1} with $b=Ax_0$ (cf. \cite{FR}).

\begin{figure}[ht]
\begin{center}
\includegraphics[scale=0.5]{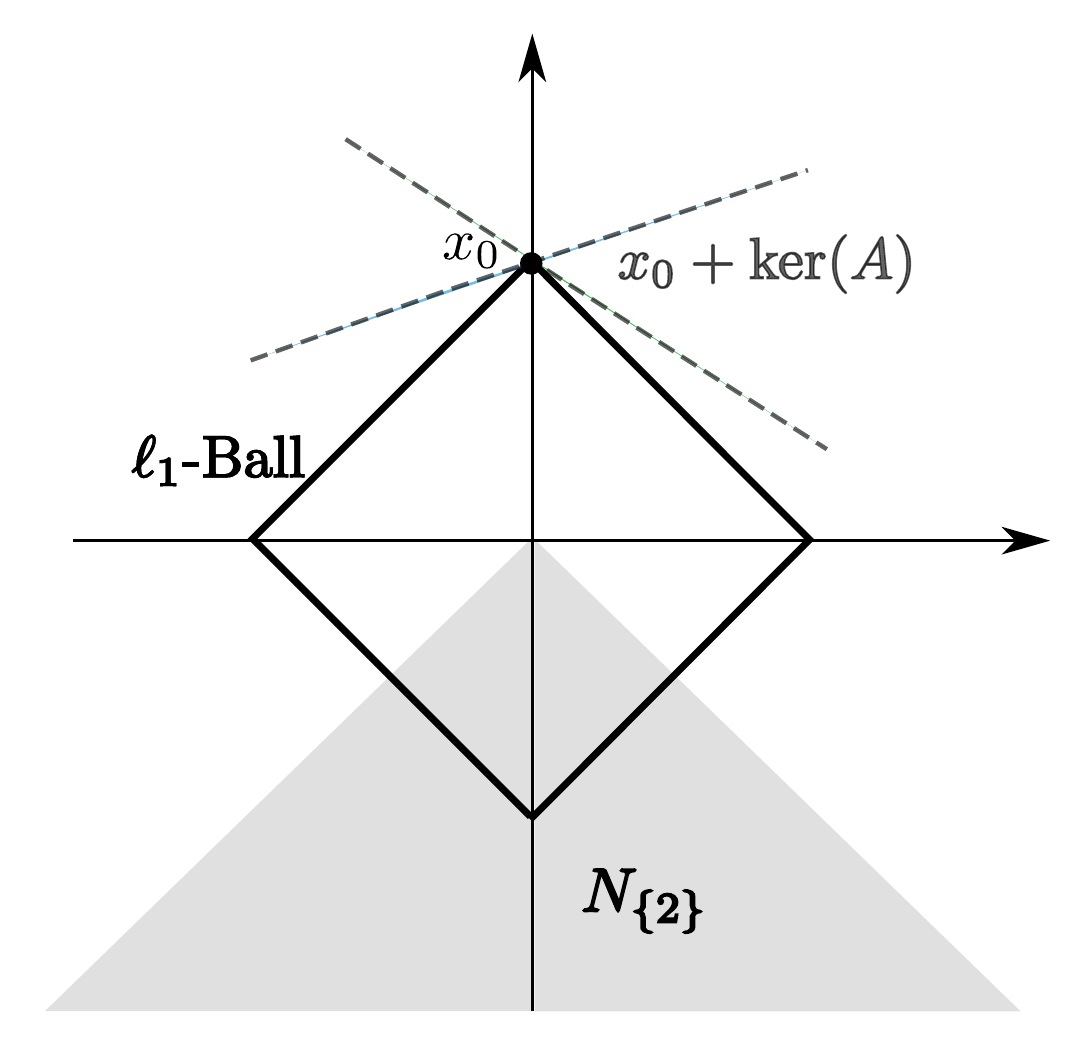}
\caption{Illustration of the NSP condition. If the set of feasible solutions $x_0+\ker(A)$ does not intersect the $\ell_1$-ball, then $x_0$ is the solution with the smallest $\ell_1$-norm and therefore the unique minimizer of \protect\eqref{P1}. This is equivalent to the condition that the kernel of $A$ does not intersect the set $N_{\{2\}}$, which is the descent cone corresponding to basis pursuit (cf.~Section \protect\ref{sec:IntPhaseTrans}).}\label{fig:NSP}
\end{center}
\end{figure}

By using random matrices $A$ such as a matrix with Gaussian iid entries, it is possible to achieve a very high probability of $A$ having NSP and therefore of \eqref{P1} to succeed given that the number of measurements satisfies $m\ge C k\log(N)$, where $k$ is the sparsity of the signal $x_0$ and $C$ some positive constant not depending on $k$ and $N$ \cite{CRT}. In the following, we aim to decrease the number of measurements $m$ further using additional structural assumptions. For finite-valued signals, we ask the following two questions:
\begin{enumerate}
 \item Can performance guarantees of basis pursuit be improved when a signal is finite-valued, i.e., are less measurements or a weaker NSP condition needed
 to recover an $\mathcal{A}$-valued sparse signal via \eqref{P1}?
 \item Can quantization of the output of basis pursuit help to improve the recovery performance?
\end{enumerate}

Unfortunately, the answers to both questions are not affirmative. The second question was already studied in \cite{FK}, where the authors showed that
basis pursuit followed by a quantization as post-processing does not help to recover the exact solution; one intuition behind this result being that
the finite nature of the signal is not incorporated in the reconstruction algorithm.

The first question is answered in part by the following result. For this, we consider an adaption of basis pursuit, which performs on positive signals 
much better than classical basis pursuit \cite{Stojnic+}, namely,
\begin{align}
 \min\|x\|_1 \text{ subject to } Ax=b \text{ and } x\in \R_+^N  \label{P+}\tag{$P_{+}$}.
\end{align}
For this recovery strategy, we can prove that provided \eqref{P+} yields the unique solution for unipolar binary signals with support $K\subset [N]=\{1,\dots,N\}$,
then it will also recover every other positive-valued signal supported on $K$. Thus, to hope for unique recovery by basis pursuit using less measurements
than for positive-valued signals is not reasonable. A short proof of the following result can be found in the Appendix \ref{sec:app0}.

\begin{proposition}\label{prop:classicL1}
 Let $A\in\R^{m \times N}$ be an arbitrary measurement matrix and $K\subset [N]$. Then the following conditions are equivalent:
 \begin{enumerate}
  \item[(i)] Any vector $x_0\in \R^N_+$ with $\supp x_0\subseteq K$ is the unique solution of \eqref{P+}.
  \item[(ii)] $\mathds{1}_K$ is the unique solution of \eqref{P+} with $b = A x_0$, where $\mathds{1}_K$ denotes the unipolar binary $k$-sparse vector in $\R^N$ 
  whose nonzero entries are supported exactly on $K$.
 \end{enumerate}
\end{proposition}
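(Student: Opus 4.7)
My plan is to reduce uniqueness of the non-negative basis pursuit problem to a null-space style condition and then exploit a simple scaling argument. Throughout I read condition (ii) as ``$\mathds{1}_K$ is the unique minimizer of \eqref{P+} with $b=A\mathds{1}_K$'', since otherwise the statement makes no sense.

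The direction (i)$\Rightarrow$(ii) is free: the vector $\mathds{1}_K$ lies in $\R_+^N$ and is supported in $K$, so it is one of the signals to which (i) applies.

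For (ii)$\Rightarrow$(i), the first step is to rewrite $(P_+)$ as a linear program. Since $x\in\R_+^N$, we have $\|x\|_1=\mathbf{1}^\T x$, so for any $y\in\R_+^N$ with $\supp y\subseteq K$ the feasibility set of $(P_+)$ with $b=Ay$ is $\{y+v:v\in\ker A,\ v\ge -y\}$, and along this set the objective equals $\|y\|_1+\mathbf{1}^\T v$. Hence $y$ is the \emph{unique} minimizer of $(P_+)$ if and only if
\begin{equation}
  v\in\ker A,\quad v\ge -y,\quad v\ne 0 \quad\Longrightarrow\quad \mathbf{1}^\T v>0. \tag{$\star_y$}
\end{equation}
Thus (ii) is precisely $(\star_{\mathds{1}_K})$, and I need to show $(\star_{\mathds{1}_K})$ implies $(\star_{x_0})$ for every $x_0\in\R_+^N$ with $\supp x_0\subseteq K$.

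The main step is a scaling argument. Fix such an $x_0$. The case $x_0=0$ is immediate: any nonzero $v\ge 0$ has $\mathbf{1}^\T v>0$, so $(\star_0)$ holds unconditionally. If $x_0\ne 0$, set $M:=\|x_0\|_\infty>0$ and note that $\supp x_0\subseteq K$ gives $x_0\le M\mathds{1}_K$ entrywise. Now take any nonzero $v\in\ker A$ with $v\ge -x_0$ and define $v':=v/M$. Then $v'\in\ker A$, $v'\ne 0$, and
\begin{equation}
  v'=\tfrac{1}{M}v\ \ge\ -\tfrac{1}{M}x_0\ \ge\ -\mathds{1}_K,
\end{equation}
so $v'$ satisfies the hypotheses of $(\star_{\mathds{1}_K})$. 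By (ii), $\mathbf{1}^\T v'>0$, hence $\mathbf{1}^\T v>0$, establishing $(\star_{x_0})$ and therefore uniqueness of $x_0$ in \eqref{P+}.

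The only thing one has to be a little careful with is the bookkeeping of the constraint $v\ge -y$ under rescaling and the degenerate cases $x_0=0$ or $K=\emptyset$; none of this is a genuine obstacle. The conceptual content is that $\mathds{1}_K$ is the ``worst'' positive signal supported on $K$ in the sense that its feasible perturbation cone $\{v\ge-\mathds{1}_K\}$ is the largest one among rescalings of positive signals supported on $K$, so the null-space condition needed for $\mathds{1}_K$ already implies it for every other such $x_0$.
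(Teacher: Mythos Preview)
Your proof is correct and uses the same underlying scaling idea as the paper. The paper's proof of (ii)$\Rightarrow$(i) proceeds in two steps: first it shows that (ii) implies the NSP$_+$ condition $\ker(A)\cap N^+\cap H_K^+=\{0\}$ by taking a nonzero $u\in\ker(A)$ with $u_i\ge 0$ on $K^C$, scaling to $\|u\|_\infty\le 1$, and observing that $\mathds{1}_K+u$ is then feasible for $(P_+)$ with $\|\mathds{1}_K+u\|_1=|K|+\sum_i u_i$; second, it invokes the result from \cite{Stojnic+} that NSP$_+$ implies unique recovery of every nonnegative signal supported on $K$. Your argument collapses these two steps into one: rather than passing through the named NSP$_+$ condition and an external citation, you characterize uniqueness directly via $(\star_y)$ and scale a would-be perturbation of $x_0$ into a perturbation of $\mathds{1}_K$. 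The scaling is the same device, applied in the reverse direction, and your version is fully self-contained.
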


These considerations imply that a better performance for unipolar binary signals can only be achieved if the finite nature of the unipolar binary signals
is incorporated into basis pursuit. One first idea could be to solve the problem given by
\begin{align}
   \min\|x\|_1 \text{ subject to } Ax=b \quad \text{and} \quad x\in \mathcal{A}^N.
\end{align}
Unfortunately, this is a very hard problem due to the non-convexity of $\mathcal{A}$. To resolve the problem of
nonconvexity of the set $\mathcal{A}^N$, one can consider the same problem with a convexified version of this set which is
$\convHull{\mathcal{A}^N}$ as the convex hull of $\mathcal{A}^N$. Then the minimization problem reads
\begin{align}
 \min\|x\|_1 \text{ subject to } Ax=b \text{ and } x\in \convHull{\mathcal{A}}^N. \label{PA}\tag{$P_{\mathcal{A}}$}
\end{align}

We wish to mention that this reformulation was first considered by Stojnic \cite{Stojnic_Binary} with $\mathcal{A}=\{0,1\}$, 
in which case basis pursuit is adapted to
\begin{align}
 \min\|x\|_1 \text{ subject to } Ax=b \text{ and } x\in [0,1]^N  \label{Pbin}\tag{$P_{\text{bin}}$}.
\end{align}
Stojnic introduced a new NSP condition, which guarantees exact recovery of unipolar binary signals with support being a fixed 
set $K\subset [N]$ by using \eqref{Pbin} \cite{Stojnic_Binary}. In this paper, the author also derives a sufficient condition on 
the sparsity to ensure successful recovery through \eqref{Pbin}, by computing the Gaussian width of the so-called descent cone. 
His approach is however not only quite complicated and does not easily allow an interpretation, e.g., in terms of the number of
necessary measurements; it is also entirely restricted to the binary case.

\subsection{Phase Transition in Convex Programs} \label{sec:IntPhaseTrans}

As mentioned in Subsection \ref{sec:BasPur}, basis pursuit recovers a vector $x_0$ uniquely if and only if the set of feasible solutions $\{x : Ax=b\}$
intersects with an $\ell_1$-ball precisely in $x_0$. The latter condition can also be described in terms of the descent cone of the
$\ell_1$ norm as illustrated in Figure \ref{fig:NSP}. The \emph{descent cone} of a convex function $f:\R^N\rightarrow \bar{\R}$ at a
point $x$ is the conic hull of the perturbations of $f$ which do not increase $f$ near $x$, i.e.,
\begin{align}
 \mathcal{D}(f,x)= \bigcup_{\tau >0}\{y\in \R^N: f(x+\tau y)\le f(x)\}.
\end{align}

To analyze the success of a general convex program of the form
\begin{align}\label{Pf}
 \min f(x) \quad \text{subject to} \quad Ax=b,
\end{align}
where $f:\R\rightarrow \bar{\R}$ is a convex function, the fact that a vector $x_0$ is the unique optimal solution of the convex program
\eqref{Pf} if and only if $\mathcal{D}(f,x_0)\cap \ker{A}=\{0\}$ can be employed \cite{ALMT}. Thus, if the measurement matrix $A$ is a
random matrix, the probability that $\mathcal{D}(f,x_0)\cap \ker{A}=\{0\}$ would need to be computed.

In this work we focus on \emph{Gaussian matrices} $A\in \R^{m \times N}$, i.e., on matrices whose entries are drawn from the standard normal
distribution:
\begin{align}
 A=m^{-1/2}\; [a_{i,j}]_{i,j=1}^{m,N} \quad \text{with} \quad  a_{i,j} \sim \mathcal{N}(0,1).
\end{align}
It was shown in \cite{ALMT} that the probability of $\mathcal{D}(f,x_0)\cap \ker{A}=\{0\}$ can be computed using the so-called \emph{statistical
dimension} of  $\mathcal{D}(f,x_0)$. Recall that the \emph{statistical dimension} $\delta(C)$ of a closed convex cone $C\in \R^N$ is defined by
\begin{align}
 \delta\left(C\right)=\mathbb{E}\left[\|\pi_C(g)\|_2^2\right],
\end{align}
where $g\in \R^N$ is a standard normal vector and $\pi_C$ denotes the Euclidean projection onto the cone $C$. 

More precisely, \cite[Thm. II]{ALMT}  states that the transition from failure to success of
\eqref{Pf} occurs when the number of measurements equals the statistical dimension $\delta\left(\mathcal{D}(f,x_0)\right)$ and that the shift
from failure to success takes place over a range of about $\mathcal{O}(\sqrt{N})$ measurements. It was also shown in \cite{ALMT} that an
upper bound for the statistical dimension of a descent cone can be derived as follows:
\begin{enumerate}
 \item Compute the subdifferential $\partial f(x_0)=\{s\in \R^N: f(y)- f(x_0)\ge\langle s,y-x_0 \rangle \text{ for all } y\in \R^N\}$ of $f$ at $x_0$.
 \item For each $\tau\ge 0$, compute $J(\tau)=\mathbb{E}[\dist{\mathbf{g}}{\tau \, \partial f(x_0)}^2]$, where $\mathbf{g}$ is a standard normal vector.
 \item Then $\inf_{\tau\ge0}J(\tau)$ is an upper bound for $\delta(\mathcal{D}(f,x_0))$.
\end{enumerate}

We will see that for unipolar binary and bipolar ternary signals, interestingly, the bound $\Delta=\inf_{\tau\ge0}J(\tau)$ depends only on the size of
the support of $x_0$. Therefore, in the sequel, we denote it as $\Delta_{\text{bin}}(k)$ or $\Delta_{\pm\text{ter}}(k)$, where $k$ is the
cardinality of the support of $x_0$.

\subsection{Previous Work}

Until today sparsity and finiteness have been considered mostly separately. Compressed sensing focusses almost entirely on sparsity without considering
finiteness \cite{Donoho,FR}, whereas approaches such as lattice decoding  \cite{AEVZ,YW} utilizes the finite nature of a signal without taking sparsity into
account.

On the theoretical side, besides the previously mentioned 1-bit compressed sensing, one approach which deals with signals having entries
from a finite alphabet $\mathcal{A}=\{0,1,\dots, p\}$, is considered, for example, in the papers \cite{DM,TLL}. The authors assume that
$\mathcal{A}$ is a field, i.e., $p$ is assumed to be prime, which is very different from our assumption. More closely related is the
publication \cite{MR}, in which signals with entries in $\mathcal{A}=\{-1,1\}$ have been considered as so-called \emph{saturated vectors}.
For those, it was shown that $\ell_{\infty}$ minimization, i.e.,
\begin{align}
 \min\|x\|_{\infty} \text{ subject to } Ax=b   \label{PInf}\tag{$P_{\infty}$}
\end{align}
is perfectly suited \cite{JJF}. The authors in \cite{MR} have shown -- similar to our result in the situation of unipolar binary signals -- that 
at most $\frac{N}{2}$ measurements will be sufficient
to almost certainly recover any signal in $\{\pm1\}$. In our work, we will consider, among others, signals with entries in $\mathcal{A}=\{-1,0,1\}$.
Hence, in the case $|\supp{x_0}|=N$, i.e., $x_0$ having full support, it will necessarily only consist of entries in $\{-1,1\}$.
In this case, our results then reduce to the statement that $\frac{N}{2}$ measurements are sufficient with high probability.

A slightly different approach, which will though be very useful to us is based on the geometrical view of basis pursuit, as, e.g., 
carried out by Donoho et al. \cite{DT}. In \cite{DTHC} a geometrical analysis of basis pursuit with so-called
box constraint was performed, which will be the tool of choice in the underlying work. In this work the uniform recovery of
positive-valued sparse signals and so-called $k$-simple signals have been considered. A signal $x\in \R^N$ is called $k$-simple,
if $x\in [0,a]^N$ and at most $k$ of its entries are not in $\{0,a\}$, for some $a\in \R_+$. However, those results do not provide
phase transitions for special classes of matrices, they rather give criteria that matrices need to fulfill.

In \cite{LPST} conditions for the unique recoverability of integer-valued signals have been studied. The therein considered problems 
are in general NP-hard. However, for binary variables, medium-sized problems are shown to be solvable in reasonable time.

On the algorithmic side, the \emph{sphere decoder} \cite{AEVZ} is a useful algorithm to generally recover integer signals from noisy
linear measurements. There have been some attempts to incorporate sparsity constraints into the sphere decoder \cite{TLL, ZG}. However,
underdetermined systems still remain very problematic, even if there do exist some papers dealing with this situation: In \cite{WL},
the authors suggest to artificially add extra equations to the system, which probably cannot achieve a good performance for the sparse
setting we are considering. Another idea, presented in \cite{DAB}, is to combine sphere decoding on a part of the signal of length $m$
with a brute force search on the rest of the signal. However, this forces an immense growth of the complexity of computations. In
\cite{ZLH} another way of determining the remaining part of the signal has been suggested. However, the methods remain very heuristically
and theoretical performance analysis are rare.

There already exist some few cases in which compressed sensing algorithms were adapted to the finite or rather discrete situation. One case is orthogonal
matching pursuit (OMP), which has been considered in connection with quantization, soft feedback \cite{SFSD} and the sphere decoder
\cite{SFSF}. Additionally, in \cite{FK} the knowledge of the discrete nature has been used to initialize the support set for the OMP
algorithm.  This approach is able to slightly beat conventional compressed sensing algorithms. The first mentioned approaches show
improvements of the symbol error rates by incorporating the finite or discrete structure in OMP, however, they do not consider the reduction of
the number of measurements.

\subsection{Our Contribution}

Our work analyzes the recovery of finite-valued $k$-sparse signals using basis pursuit with box constraints in full generality. The 
related alphabets belong to either the unipolar ($\mathcal{A}=\{0,\dots, L\}, L\in \mathbb{Z}$) or the bipolar ($\mathcal{A}=\{-L_1,\dots, L_2\}^N$, 
$L_1,L_2\in\mathbb{N}$) situation.

Our main contributions can be summarized as follows.
\begin{itemize}
\item {\em Null Space Properties.} We provide null space properties for the recovery of finite-valued $k$-sparse signals, which allow equivalent
conditions for unique recoverability of such signals (see Theorems \ref{thm:BinNec}, \ref{thm:BTNSP} for the unipolar binary and bipolar ternary cases, and Theorems \ref{thm:NSPUF}, \ref{thm:NSPBF} for the general situation). The sufficiency of the null space property by Stojnic \cite{Stojnic_Binary} in the unipolar
binary case is a special case of our results. 
\item {\em Stability.} All versions of basis pursuit with box constraints --~adapted to the specific alphabet considered~-- are shown to be stable
under noisy measurements with precise error bounds.
\item {\em Phase Transitions.} We analytically compute the phase transitions of all versions of basis pursuit with box constraints using the statistical
dimension of the associated descent cone as the main methodology (see Theorems \ref{mainthm1}, \ref{mainthm2} for the unipolar binary and bipolar ternary cases, and Theorems \ref{mainthm3}, \ref{mainthm4} for the general situation). Those are
then verified numerically in Section \ref{sec:numerics}.
\item {\em Comparison of Different Alphabets.} Our analysis will surprisingly show that the unipolar situation is very different from the bipolar one. 
One of our findings is that the positioning of the zero -- i.e., whether it is a boundary element or not -- is crucial. A second key observation is
the fact that mainly the boundary elements play a role in the sense of $-L_1$ and $L_2$ in the case of  bipolar finite-valued signals.
\end{itemize}

\section{Binary Sparse Signals}

\subsection{Adapted NSP Condition}

We start our considerations with unipolar binary sparse signals, i.e.,  $x_0\in \{0,1\}^N$. The key objective is to solve the 
underdetermined system of linear equations
\begin{align}
 Ax_0=b,
\end{align}
with $A\in \R^{m\times N}$ and $b\in \R^m$, $m<N$, under the additional assumption that $x$ is sparse and unipolar binary, i.e.,
$x_0\in \mathcal{A}^N$, $\mathcal{A}=\{0,1\}$, and $\|x_0\|_0\le k$. As already described in the introduction, a natural
approach is to exploit basis pursuit under the additional constraint that $x_0\in \mathcal{A}^N$. However, to derive a
convex program we need to 'convexify' $\mathcal{A}$, which yields \emph{binary basis pursuit} as introduced in
\cite{Stojnic_Binary}, namely,
\begin{align}
 \min\|x\|_1 \text{ subject to } Ax=b \text{ and } x\in [0,1]^N \tag{$P_{\text{bin}}$}.
\end{align}
Notice that this minimization does not solely require that $\mathcal{A}$ is of the form $\{0,1\}$, but is in general also 
applicable to non-integer alphabets such as $\{0,\frac12, 1\}$; certainly, with a very different analysis of recoverability.

To address the question under which conditions this program uniquely recovers a given unipolar binary sparse signal, we take a
closer look at the null space property (NSP). In \cite{Stojnic_Binary} the following definition for a weakened NSP has
been introduced.

\begin{definition}\label{def:BNSP}
A matrix $A\in \R^{m\times N}$ is said to satisfy the \emph{binary NSP} with respect to some set $K\subset [N]$, if
\noeqref{BNSP}
\begin{align}
 \ker(A)\cap N_K\cap H_K =\{0\} \label{BNSP} \tag{B-NSP},
\end{align}
where $N_K=\{w\in \R^N:\|w_K\|_1\ge\|w_{K^C}\|_1\}$ and $H_K=\{w\in \R^N: w_i\le 0\text{ for } i\in K, \text{ and }
w_i\ge 0 \text{ for } i\in K^C\}$.
\end{definition}

Observe that the NSP can be rewritten as $\ker(A)\cap N_K=\{0\}$. Thus, B-NSP is indeed weaker than NSP. Further note
that we could have alternatively formulated B-NSP by substituting $N_K$ by $N^+=\{w\in\R^N:0\ge\sum_{i=1}^Nw_i\}$.
However, the formulation of NSP conditions for other specially structured signals requires the use of $N_K$ (e.g.,
bipolar ternary) and sometimes of $N^+$ (e.g., unipolar finite-valued signals).

\begin{figure}[ht]
\begin{center}
 \includegraphics[scale=0.5]{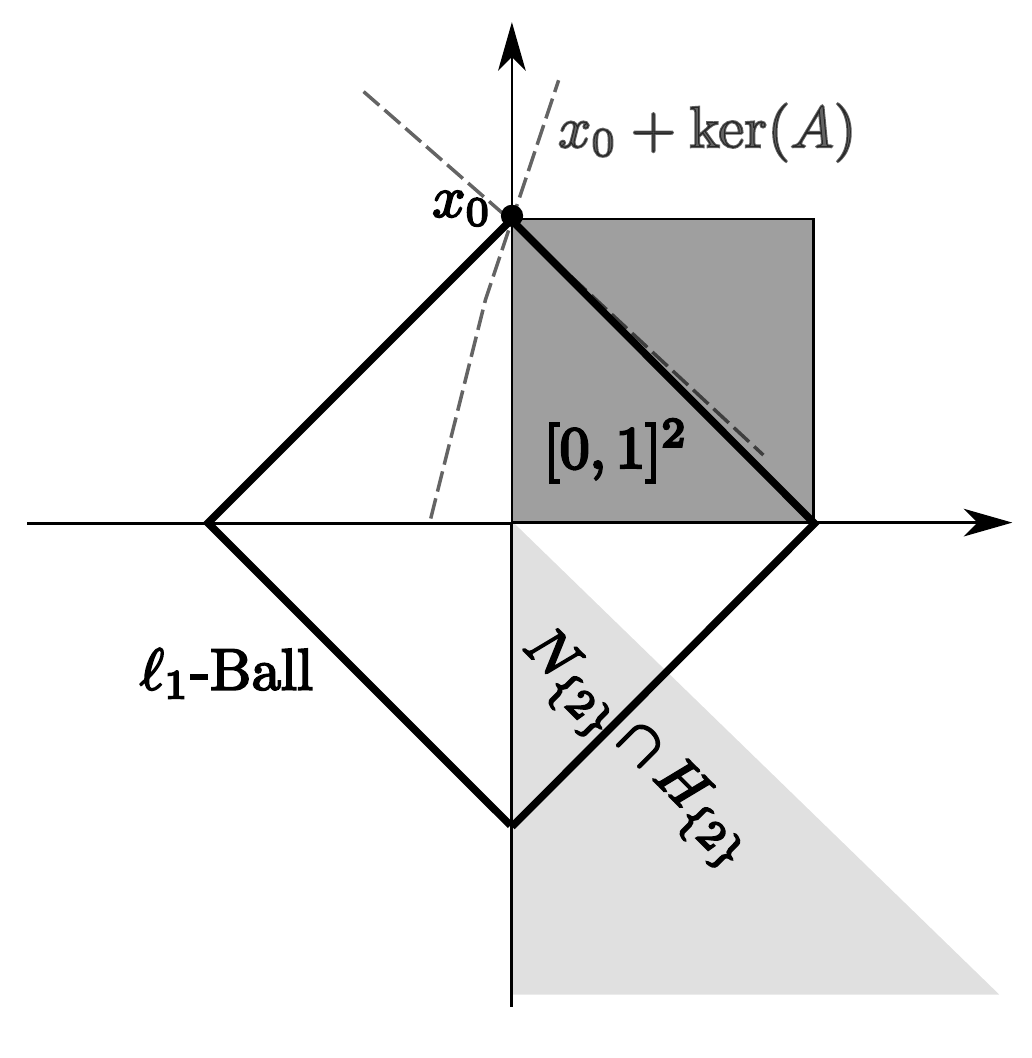}
 \caption{Geometrical Interpretation of the B-NSP. If the set  $x_0+\ker(A)$ does not intersect the $\ell_1$-ball and the constraint set $[0,1]^2$ simultaneously, $x_0$ is the unique solution of \protect\eqref{Pbin}. This is equivalent to the condition that the kernel of $A$ does not intersect the descent cone $N_{\{2\}}\cap H_{\{2\}}$ corresponding to \protect\eqref{Pbin}.}\label{fig:NSPBin}
 \end{center}
\end{figure}

The following theorem states that B-NSP is indeed sufficient to recover unipolar binary sparse signals. Its proof is very easy
and omitted in \cite{Stojnic_Binary}. We will though still provide a proof, which will then motivate our further considerations.

\begin{theorem}{\cite{Stojnic_Binary}}\label{StojBin}
 Let $A\in \R^{m\times N}$ be a measurement matrix which fulfills B-NSP with respect to a set $K\subset [N]$.
 Then $\mathds{1}_K$ is the unique solution of \eqref{Pbin} with $b=A\mathds{1}_K$ and also the unique unipolar binary solution of \eqref{P0}.
\end{theorem}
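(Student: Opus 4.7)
The plan is to deduce the uniqueness claim by a direct geometric application of \eqref{BNSP}. I would take any feasible competitor $y$ for \eqref{Pbin} with $b = A\mathds{1}_K$ whose objective value is at most $\|\mathds{1}_K\|_1 = |K|$, set $w := y - \mathds{1}_K$, and argue that $w \in \ker(A) \cap N_K \cap H_K$, so that B-NSP forces $w = 0$, i.e., $y = \mathds{1}_K$. Membership $w \in \ker(A)$ is immediate from $Ay = A\mathds{1}_K$, so the remaining work is to verify the two cone memberships.

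For $w \in H_K$, I would simply read off the box constraint: $y_i \in [0,1]$ gives $w_i = y_i - 1 \le 0$ for $i \in K$ and $w_i = y_i \ge 0$ for $i \in K^C$. For $w \in N_K$, the key computation is $\|w_K\|_1 = \sum_{i \in K}(1 - y_i) = |K| - \sum_{i \in K} y_i$ and $\|w_{K^C}\|_1 = \sum_{i \in K^C} y_i$, which yields $\|w_K\|_1 - \|w_{K^C}\|_1 = |K| - \|y\|_1 \ge 0$. Here the final step crucially uses the non-negativity of $y$ (from the box constraint) to rewrite $\|y\|_1 = \sum_i y_i$, together with the feasibility hypothesis $\|y\|_1 \le |K|$. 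For the second statement about \eqref{P0}, I would observe that any $y \in \{0,1\}^N$ with $Ay = A\mathds{1}_K$ and $\|y\|_0 \le |K|$ automatically satisfies $y \in [0,1]^N$ and $\|y\|_1 = \|y\|_0 \le |K|$, hence is a legitimate competitor in \eqref{Pbin}; the first part then gives $y = \mathds{1}_K$.

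I do not anticipate a real technical obstacle: the whole argument is essentially the observation that $N_K \cap H_K$ is precisely the descent cone of the objective of \eqref{Pbin} at $\mathds{1}_K$ (cf.~Figure~\ref{fig:NSPBin}), which is exactly what B-NSP is designed to control. The only point that warrants mild care is tracking the two distinct roles of the box constraint $y \in [0,1]^N$: it enters once as the sign structure that places $w$ in $H_K$, and a second time (via $y \ge 0$) to convert the $\ell_1$ objective bound into the $N_K$ inequality. Once these are kept straight, the proof is a bookkeeping exercise.
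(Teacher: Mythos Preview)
Your proposal is correct and follows essentially the same route as the paper's proof. The only cosmetic difference is the direction of the contrapositive: the paper takes an arbitrary feasible $x$, shows $x-\mathds{1}_K\in\ker(A)\cap H_K$, and then uses B-NSP to deduce $x-\mathds{1}_K\notin N_K$ (hence $\|x\|_1>\|\mathds{1}_K\|_1$) whenever $x\neq\mathds{1}_K$; you instead assume $\|y\|_1\le|K|$ up front, verify $w\in\ker(A)\cap N_K\cap H_K$ directly, and conclude $w=0$ from B-NSP.
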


\begin{proof}
Assume that $A$ satisfies B-NSP with respect to $K$. Further, let $x$ be a solution of \eqref{Pbin} with $b = A\mathds{1}_K$.
Then it follows that $x-\mathds{1}_K \in \ker{A}$ as well as
\begin{enumerate}
 \item[(a)] for $i\in K^C$ it holds $(x-\mathds{1}_K)_i=x_i\ge0$, \label{prop1}
 \item[(b)] for $i\in K$ it holds $(x-\mathds{1}_K)_i=x_i-1\le 0$, \label{prop2}
\end{enumerate}
since $x$ is the solution of \eqref{Pbin} and therefore all its entries lie between zero and one. This shows that $x-\mathds{1}_K \in \ker{A}
\cap H_K$. Together with B-NSP, we derive $x-\mathds{1}_K \notin N_K$, that is,
\begin{align}
 \sum_{i\in K}|(x-\mathds{1}_K)_i| < \sum_{i\in K^C}|(x-\mathds{1}_K)_i|.
\end{align}
Using (a) and (b), this is equivalent to
\begin{align}
-\sum_{i\in K}(x-\mathds{1}_K)_i< \sum_{i\in K^C}x_i,
\end{align}
which implies
\begin{align}
\|\mathds{1}_K\|_1=k=&\sum_{i\in K}(\mathds{1}_K)_i< \sum_{i=1}^N x_i = \|x\|_1.
\end{align}
This shows that $\mathds{1}_K$ is the unique solution of \eqref{Pbin} with $b = A\mathds{1}_K$. Similarly, if $x$ is a unipolar binary solution of
\eqref{Pbin} with $b = A\mathds{1}_K$, we obtain
\begin{align}
\|\mathds{1}_K\|_0 = k < \|x\|_1 \le \|x\|_0.
\end{align}
Therefore $\mathds{1}_K$ is the unique unipolar binary solution of \eqref{P0} with $b = A\mathds{1}_K$.
\end{proof}

Notice that condition (b) in the proof hinges on the fact that $\mathds{1}_K$ is a unipolar binary vector. This argument does not apply to signals
which have entries in a set of cardinality larger than two. Further note that it is essential that the signal is exactly supported on $K$.
If the signal happens to be supported on a proper subset of $K$, we cannot deduce that this signal is the unique solution of \eqref{Pbin}
(also compare with Theorem \ref{thm:Uniform})

In fact, B-NSP is optimal in the sense that unique recovery of a unipolar binary $k$-sparse signal supported on $K$ also implies B-NSP with respect
to $K$. Thus, we can show the following equivalence.

\begin{theorem}\label{thm:BinNec}
Let $A\in \R^{m\times N}$. Then the following conditions are equivalent:
\begin{enumerate}
\item[(i)] The measurement matrix $A$ satisfies B-NSP with respect to a set $K\subset [N]$.
\item[(ii)] The vector $\mathds{1}_K$ is the unique solution of \eqref{Pbin} with $b=A\mathds{1}_K$.
\end{enumerate}
If one of those cases is true, then $\mathds{1}_K$ is also the unique unipolar binary solution of \eqref{P0}.
\end{theorem}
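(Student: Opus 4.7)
The proof splits into the two implications of the equivalence, plus the trailing claim about \eqref{P0}.

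The direction (i)$\Rightarrow$(ii) is exactly Theorem \ref{StojBin}, so nothing new is required there. For the converse (ii)$\Rightarrow$(i), I plan to argue by contraposition: assuming B-NSP fails, I will exhibit a feasible point for \eqref{Pbin} which is different from $\mathds{1}_K$ but has $\ell_1$-norm no larger than $k$, contradicting uniqueness. Concretely, if B-NSP fails there is a nonzero $w\in\ker(A)\cap N_K\cap H_K$, so $Aw=0$, $w_i\le 0$ for $i\in K$, $w_i\ge 0$ for $i\in K^C$, and $\|w_K\|_1\ge \|w_{K^C}\|_1$. Consider the perturbation
\begin{align}
x=\mathds{1}_K+tw
\end{align}
for $t>0$. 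Since $Aw=0$, we have $Ax=A\mathds{1}_K=b$, so $x$ satisfies the measurement constraint for every $t$. The sign pattern coming from $H_K$ ensures that $x_i=1+tw_i\in[0,1]$ for $i\in K$ and $x_i=tw_i\in[0,1]$ for $i\in K^C$, provided $t>0$ is chosen sufficiently small (taking $t\le \min\{-1/w_i:i\in K,w_i<0\}\cup\{1/w_i:i\in K^C,w_i>0\}$ works). Hence $x$ is feasible for \eqref{Pbin}.

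Computing the $\ell_1$-norm exploits the sign structure directly: $|x_i|=1+tw_i$ for $i\in K$ and $|x_i|=tw_i$ for $i\in K^C$, so
\begin{align}
\|x\|_1=k+t\sum_{i\in K}w_i+t\sum_{i\in K^C}w_i=k-t\|w_K\|_1+t\|w_{K^C}\|_1\le k=\|\mathds{1}_K\|_1,
\end{align}
using $\|w_K\|_1\ge\|w_{K^C}\|_1$. Since $w\neq 0$ and $t>0$, we have $x\neq\mathds{1}_K$, so either $\|x\|_1<k$ (so $\mathds{1}_K$ is not optimal) or $\|x\|_1=k$ (so $\mathds{1}_K$ is not the unique minimizer). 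In either case (ii) fails, which proves the contrapositive.

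Finally, the closing statement that, under (i) or (ii), $\mathds{1}_K$ is the unique unipolar binary solution of \eqref{P0} with $b=A\mathds{1}_K$ is already contained in Theorem \ref{StojBin}: once B-NSP holds, any unipolar binary $x$ with $Ax=A\mathds{1}_K$ is in particular feasible for \eqref{Pbin}, so the chain $\|\mathds{1}_K\|_0=k<\|x\|_1\le \|x\|_0$ (the middle inequality coming from the uniqueness proven above, with strict inequality when $x\neq\mathds{1}_K$) gives the conclusion. The only genuine work, then, is the new direction (ii)$\Rightarrow$(i), where the mild obstacle is verifying that the perturbation $\mathds{1}_K+tw$ remains inside the box $[0,1]^N$; this is immediate from the sign conditions of $H_K$ and the freedom to choose $t$ small.
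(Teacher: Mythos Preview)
Your proof is correct and follows essentially the same approach as the paper: for (ii)$\Rightarrow$(i) both argue by contraposition, pick a nonzero $w\in\ker(A)\cap N_K\cap H_K$, and use the sign pattern from $H_K$ to show that a suitably scaled perturbation $\mathds{1}_K+tw$ is feasible for \eqref{Pbin} with $\ell_1$-norm at most $k$. The only cosmetic difference is that the paper normalizes $w$ so that $\|w\|_\infty\le 1$ and then takes $t=1$, whereas you keep $w$ and choose $t$ small; both amount to the same scaling argument.
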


\begin{proof}
(i) $\Rightarrow$ (ii). This is Theorem \ref{StojBin}.

(ii) $\Rightarrow$ (i). Suppose to the contrary that $u \in \ker(A)\cap N_K\cap H_K$ is a nonzero element. Since $\ker(A)\cap N_K\cap H_K$ is
a linear subspace of $\R^N$, we may assume that $u \in [-1,1]^N$, i.e., $\|u\|_{\infty}\le 1$. As $u\in H_K$, we have $u_i\in [-1,0]$ for all
$i\in K$ and $u_i\in [0,1]$ for all $i\in K^C$. Thus $w=\mathds{1}_K + u \neq \mathds{1}_K$ is an element of $[0,1]^N$. Next, $u\in \ker(A)$
implies that $Aw= A\mathds{1}_K$, which implies that $w$ is a feasible solution of \eqref{Pbin}. The condition $u\in N_K$ then yields
\begin{align}
 \|w\|_1=\sum_{i\in K} 1+ \sum_{i\in K}u_i +\sum_{i\in K^C}u_i \le \sum_{i\in K} 1 =\|\mathds{1}_K\|_1,
\end{align}
contradicting the assumption.
\end{proof}

We have shown that B-NSP is optimal, in the sense that we have equivalence between the uniqueness of a unipolar binary $k$-sparse solution and B-NSP.
But it is not clear so far that B-NSP only ensures unique recovery for unipolar binary signals. It may also be possible that all signals with entries
between zero and one that are supported on $K$ can be uniquely recovered if $A$ satisfies B-NSP with respect to $K$. In the following we
provide a counterexample.

\begin{example}\rm
The B-NSP is indeed weaker than NSP and is perfectly suited for the recovery of unipolar binary signals. Theorem \ref{thm:BinNec} states that
unique recovery of a unipolar binary signal $\mathds{1}_K$ using \eqref{Pbin} is possible if and only if $A$ has B-NSP with respect to the set $K$. The following
toy example shows an even stronger result, namely that provided $A$ satisfies B-NSP with respect to some subset $K\subset [N]$, even unique
recovery of $\lambda \mathds{1}_K$ for every $0\le \lambda < 1$ is not guaranteed.

For this, let $A\in\R^{2\times4}$ with
 $$\ker(A)=\spann\left\{u=\begin{bmatrix}
                       -1\\-1\\0\\3
                      \end{bmatrix},v= \begin{bmatrix} 1\\-3\\0.5\\1.5\end{bmatrix}
\right\}.$$
Then $v$ does not fulfill NSP with respect to the set $K=\{1,2\}$, nor NSP$_+$, which is sufficient to uniquely recover every positive-valued
signal supported on $K$ (see Theorem \ref{thm:Uniform}~(iii) for a definition). However, $A$ fulfills B-NSP. 
To show this let $N_K$ and $H_K$ be defined as in Definition \ref{def:BNSP}. If $w\in \ker(A)\cap H_K$, there are $\alpha, \beta\in \R$ such that $w=\alpha u+\beta v$ and it holds $w_1, w_2\le 0$ and $w_3, w_4\ge 0$. From $0\le w_3=\beta v_3=0.5\beta$ and $0\ge w_1=\alpha u_1+\beta v_1=-\alpha+\beta$ it, then, follows immediately that $0\le \beta\le \alpha$. 

Now assume towards a contradiction that $w\in N_K$, i.e., that
$$-w_1-w_2\ge w_3+w_4,$$ which is equivalent to $$\alpha-\beta+\alpha+3\beta\ge \tfrac12\beta+3\alpha+\tfrac32\beta,$$ which yields $$0\ge \alpha.$$
This is a contradiction, since otherwise $\alpha=\beta=0$ and hence, $w=0$.

Now, consider the signal $x_0=\lambda \mathds{1}_K$ and let $w=\frac{\lambda}{3}v=\begin{bmatrix}\frac{\lambda}{3}&-\lambda&\frac{\lambda}{6}&\frac{\lambda}{2}\end{bmatrix}\in \ker(A)$. Then we have $x_0+w=\begin{bmatrix}4\frac{\lambda}{3}&0&\frac{\lambda}{6}&\frac{\lambda}{2}\end{bmatrix}$ ia a feasible solution for $\lambda\le3/4$. Hence,
$\|x+w\|_1=2\lambda=\|x\|_1$ and $Ax=A(x+w)$.
\end{example}

We conclude this section with some observations.

\begin{remark}\rm\

\begin{enumerate}
\item[(1)]  The previous results hold for all alphabets with cardinality 2. Thus, we can replace $1$ by every other value $\alpha\in \R$; statements and proofs remain the same. The statements and results remain also the same, if we replace $0$ by any other value, i.e., we could also consider alphabets of the form $\{-1,1\}$.
\item[(2)] If $A$ fulfills B-NSP with respect to some subset $K\subset[N]$ and $U$ is some orthogonal matrix, then $UA$ also fulfills B-NSP with respect to $K$.
\item[(3)] However, the product $AU$ does not necessarily fulfill B-NSP.
\end{enumerate}

\end{remark}

\subsection{Nonuniform and Uniform Recovery}

Theorem \ref{StojBin} is a nonuniform recovery result for unipolar binary sparse signals via \eqref{Pbin}. This raises the question of what can be shown
concerning uniform recovery. In fact, our next theorem shows a somewhat negative result.

\begin{theorem}\label{thm:Uniform}
 For $A\in\R^{m \times N}$ and $K\subset [N]$, the following statements are equivalent:
 \begin{enumerate}
  \item[(i)] Any unipolar binary vector $x_0$ with $\supp x_0 \subset K$ is the unique solution of \eqref{Pbin} with $b = A x_0$.
  \item[(ii)] Any vector $x_0 \in [0,1]^N$ with $\supp x_0 \subset K$ is the unique solution of \eqref{Pbin} with $b = A x_0$.
  \item[(iii)] The measurement matrix A satisfies NSP$_+$ with respect to $K$, i.e., $\ker(A)\cap N^+\cap H^+_K=\{0\}$, where
  $$H^+_K=\{w\in \R^N: w_i\ge 0 \text{ for } i\in K^C\} \quad \text{and} \quad N^+=\{w\in\R^N:0\ge \sum_{i=1}^N w_i\} . $$
  \end{enumerate}
\end{theorem}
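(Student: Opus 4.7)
The plan is to prove the three-way equivalence via the cycle (i)$\Rightarrow$(iii)$\Rightarrow$(ii)$\Rightarrow$(i). The implication (ii)$\Rightarrow$(i) is immediate, since every unipolar binary vector supported on $K$ lies in $[0,1]^N$ and has support in $K$, so uniform recovery over $[0,1]^N$-vectors trivially restricts to uniform recovery over unipolar binary vectors.

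For (iii)$\Rightarrow$(ii), I would fix an arbitrary $x_0\in [0,1]^N$ with $\supp x_0\subseteq K$ and let $x$ be any minimizer of \eqref{Pbin} with $b=Ax_0$. Setting $u:=x-x_0$, I would make three observations: first, $u\in\ker(A)$ since both $x$ and $x_0$ satisfy $Ax=Ax_0$; second, $u\in H_K^+$ because for $i\in K^C$ we have $(x_0)_i=0$ and $x_i\ge 0$, hence $u_i=x_i\ge 0$; third, $u\in N^+$ because nonnegativity of $x$ and $x_0$ turns the inequality $\|x\|_1\le\|x_0\|_1$ (which holds by minimality, since $x_0$ is feasible) into $\sum_i u_i \le 0$. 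Applying NSP$_+$ then forces $u=0$, so $x=x_0$.

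For (i)$\Rightarrow$(iii) I would use the contrapositive: assume there exists a nonzero $u\in\ker(A)\cap N^+\cap H_K^+$. Since this set is a cone, I rescale so that $\|u\|_\infty\le 1$. The key construction is to define a unipolar binary vector $x_0\in\{0,1\}^N$ supported on $K$ by $(x_0)_i=1$ exactly when $i\in K$ and $u_i<0$, and $(x_0)_i=0$ otherwise. Then $x_0+u$ is feasible for \eqref{Pbin}: for $i\in K^C$ we have $(x_0+u)_i=u_i\in[0,1]$ by $u\in H_K^+$ and $\|u\|_\infty\le 1$; for $i\in K$ with $u_i<0$ we have $(x_0+u)_i=1+u_i\in[0,1]$; and for $i\in K$ with $u_i\ge 0$ we have $(x_0+u)_i=u_i\in[0,1]$. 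Moreover, $\|x_0+u\|_1-\|x_0\|_1=\sum_i u_i\le 0$, so $x_0+u$ is another minimizer distinct from $x_0$, contradicting (i).

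The only delicate point, which I expect to be the main obstacle, is ruling out the degenerate case $x_0=0$. This occurs precisely when $u_i\ge 0$ for every $i\in K$; combined with $u\in H_K^+$ this yields $u\ge 0$ componentwise, and then $\sum_i u_i\le 0$ forces $u=0$, contradicting the choice of $u$. Hence $x_0\ne 0$, and this asymmetric construction succeeds because NSP$_+$ places no sign restriction on coordinates of $u$ inside $K$, so the freedom to flip $(x_0)_i$ between $0$ and $1$ on $K$ exactly matches the freedom of $u_i$ to be positive or negative there.
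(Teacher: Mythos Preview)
Your proof is correct and follows essentially the same cycle and the same core idea as the paper. The paper likewise proves (ii)$\Rightarrow$(i) trivially and, for (i)$\Rightarrow$(iii), also singles out the subset $\hat K=\{i\in K: v_i\le 0\}$ of a putative bad kernel element; the only difference is that the paper routes this through Theorem~\ref{thm:BinNec} (so that (i) yields B-NSP for every $\tilde K\subset K$, which is then applied at $\hat K$), whereas you unroll that step and exhibit the competing feasible point $x_0+u$ for $x_0=\mathds{1}_{\hat K}$ directly. For (iii)$\Rightarrow$(ii) the paper simply cites \cite{Stojnic+}, while you give the short self-contained argument; your version is therefore more elementary but not a different strategy.
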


Before proving this result let us discuss some implications. It unfortunately infers that even if we only wish to recover every
unipolar binary signal $x_0$ with $\supp x_0 \subset K$, the measurement matrix $A$ needs to fulfill a much stronger property. This property
is then sufficient to uniquely recover {\it every} positive signal supported on $K$ via \eqref{P+}. Thus, if we wish to show uniform
recovery results, additional assumptions on the signal to be unipolar binary are not beneficial. Therefore, in the following we will concentrate
on nonuniform recovery results.

\begin{proof}
The implication (ii) $\Rightarrow$ (i) is trivial. Moreover, the implication (iii) $\Rightarrow$ (ii) has been shown in \cite{Stojnic+}.

Thus, it remains to prove (i) $\Rightarrow$ (iii). For this, by Theorem \ref{thm:BinNec}, we can conclude that for $A$ to fulfill (i),
it needs satisfy B-NSP with respect to every subset $\tilde{K}\subset K$. To show that this implies (iii), let $v\in \ker(A)\cap H^+_K$,
i.e., $v_i\ge 0$ for all $i\in K^C$. Then there exists a subset $\hat{K}\subset K$ such that $v_i\le 0$ for all $i\in \hat{K}$ and
$v_i\ge 0$ for all $i\in \hat{K}^C$. Since $A$ satisfies B-NSP with respect to $\hat{K}$, this yields
 \begin{align}
  -\sum_{i\in \hat{K}}v_i < \sum_{i\in \hat{K}^C}v_i \Rightarrow  v\notin N^+.
 \end{align}
This being true for every $v\in \ker(A)\cap H^+_K$ implies that $\ker(A)\cap N^+\cap H^+_K=\{0\}$, i.e., $A$ satisfies NSP$_+$.
\end{proof}

\subsection{Phase Transition in Binary Basis Pursuit}\label{sec:PhaseTransBin}

We now aim to show that for $m$ large, a Gaussian matrix fulfills B-NSP with respect to some fixed but unknown support set $K\subset [N]$
with high probability, i.e., that the kernel of a Gaussian matrix does not intersect the convex cone $N_K\cap H_K$ with high probability.
In \cite{ALMT} it has been shown that this probability can be computed in terms of the statistical dimension of $N_K\cap H_K$. However, the
statistical dimension of this cone seems impossible to calculated directly. Therefore we use an approach first suggested in \cite{ALMT} to
obtain an upper bound instead. For this, we rely on the fact that $N_K\cap H_K$ can be recast into the form of a descent cone with
$f:\R^N\rightarrow \R_+$ defined by
\begin{equation} \label{eq:fx}
 f(x) =\begin{cases}
        \|x\|_1 & \text{ if } x\in [0,1]^N,\\
        \infty & \text{ otherwise}.
       \end{cases}
\end{equation}

This can be seen as follows: Using the observation $f(\mathds{1}_K+\tau y)\le f(\mathds{1}_K)<\infty$ and hence, $\mathds{1}_K+\tau y\in [0,1]^N$,
we obtain
\begin{align}
\mathcal{D}(f,\mathds{1}_K):&=\cup_{\tau>0}\{y\in\R^N:f(\mathds{1}_K+\tau y)\le f(\mathds{1}_K)\}\\
&=\cup_{\tau>0}\{y\in\R^N: \|\mathds{1}_K+\tau y\|_1\le \|\mathds{1}_K\|_1 \text{ and } \mathds{1}_K+\tau y\in [0,1]^N\}\\
&=N_K\cap H_K.
\end{align}
The intuition behind this calculation is illustrated in Figure \ref{fig:NSPBin}.

Consequently, we obtain the number of measurements necessary to recover unipolar binary signals with high probability.

\begin{theorem}\label{mainthm1}
Fix a tolerance $\varepsilon >0$. Let $K\subset [N]$, $A\in \R^{m \times N}$ be Gaussian, and $b=A\mathds{1}_K$. Further set $k=|K|$.
Then \eqref{Pbin} will succeed to recover $\mathds{1}_K$ uniquely with probability larger than $1-\varepsilon$ provided that
\begin{align}\label{eqn:m}
 m\ge \Delta_{\text{bin}}(k)+\sqrt{8\log(4/\varepsilon)N},
\end{align}
where
\begin{align}\label{eqn:Deltak}
 \Delta_{\text{bin}}(k):=\inf_{\tau\ge0}\left\{J_k(\tau)\right\}
\end{align}
with
\begin{align}\label{eqn:Jktau}
J_k(\tau) := k\int_{-\infty}^{\tau}(u-\tau)^2\phi(u)du + (N-k)\int_{\tau}^{\infty}(u-\tau)^2\phi(u)du
\end{align}
and $\phi(u)\!=\!(2\pi)^{-1/2}e^{-u^2/2}$ being the probability density of the Gaussian distribution.
\end{theorem}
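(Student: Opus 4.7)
The strategy is to recognize \eqref{Pbin} as the convex program~\eqref{Pf} with $f$ given by~\eqref{eq:fx}, use the identification $\mathcal{D}(f,\mathds{1}_K) = N_K \cap H_K$ already carried out at the start of Section~\ref{sec:PhaseTransBin}, and then apply the Gaussian kinematic bound of Amelunxen--Lotz--McCoy--Tropp~\cite{ALMT}. By Theorem~\ref{thm:BinNec}, the event ``$\mathds{1}_K$ is the unique minimizer of \eqref{Pbin}'' coincides with $\ker(A) \cap (N_K \cap H_K) = \{0\}$, and for a standard Gaussian $A$ the main bound of \cite{ALMT} guarantees this event with probability at least $1-\varepsilon$ whenever $m \ge \delta(\mathcal{D}(f,\mathds{1}_K)) + \sqrt{8\log(4/\varepsilon)N}$. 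Since any upper estimate on the statistical dimension delivers a sufficient measurement count, the theorem reduces to showing that the three-step recipe reviewed in Section~\ref{sec:IntPhaseTrans} produces exactly the quantity $\Delta_{\text{bin}}(k) = \inf_{\tau\ge 0}J_k(\tau)$ for this specific $f$ and $x_0 = \mathds{1}_K$.

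First I would compute $\partial f(\mathds{1}_K)$. Decomposing $f$ as the sum $\|\cdot\|_1 + \iota_{[0,1]^N}$ of a finite convex function and the convex indicator of the unit cube, the Moreau--Rockafellar sum rule applies (since the domain of $\|\cdot\|_1$ is all of $\R^N$ and hence interior-intersects $[0,1]^N$), and both summands split coordinate-wise. For $i\in K$, where $(\mathds{1}_K)_i = 1$, the $\ell_1$-subgradient is $\{+1\}$ and the normal cone to $[0,1]$ at the upper endpoint is $[0,\infty)$, yielding the coordinate slot $[1,\infty)$; for $i\in K^c$, where $(\mathds{1}_K)_i = 0$, the $\ell_1$-subgradient is $[-1,1]$ and the normal cone to $[0,1]$ at $0$ is $(-\infty,0]$, yielding the slot $(-\infty,1]$. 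Consequently
\[
 \tau\,\partial f(\mathds{1}_K) \;=\; \prod_{i\in K}[\tau,\infty) \;\times\; \prod_{i\in K^c}(-\infty,\tau].
\]

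The remaining computation is a routine Gaussian moment estimate. Because $\tau\,\partial f(\mathds{1}_K)$ is a Cartesian product of half-lines and the coordinates of $\mathbf{g}\sim\mathcal{N}(0,I_N)$ are independent, the squared distance to the set splits as $\sum_{i\in K}(\tau - g_i)_+^2 + \sum_{i\in K^c}(g_i - \tau)_+^2$. Taking expectations against the density $\phi$ gives $\mathbb{E}[(\tau - g_i)_+^2] = \int_{-\infty}^{\tau}(u-\tau)^2 \phi(u)\,du$ for $i\in K$ and $\mathbb{E}[(g_i - \tau)_+^2] = \int_{\tau}^{\infty}(u-\tau)^2 \phi(u)\,du$ for $i\in K^c$, reproducing \eqref{eqn:Jktau} exactly. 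Taking $\inf_{\tau\ge 0}$ identifies the upper bound on $\delta(\mathcal{D}(f,\mathds{1}_K))$ with $\Delta_{\text{bin}}(k)$, and substituting back into the ALMT tail bound delivers the claimed measurement count \eqref{eqn:m}.

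The only step where I expect to be careful is the subdifferential calculation, because $\mathds{1}_K$ sits on the boundary of $[0,1]^N$ in \emph{every} coordinate, so the normal cone of the constraint set is non-trivial at each index and must be oriented correctly (outward at the upper face for $i\in K$, outward at the lower face for $i\in K^c$). Fortuitously, that orientation combines with the $\ell_1$-subgradient to produce an \emph{unbounded} half-line in every coordinate, which is exactly why the Gaussian projection integral collapses to the tidy one-dimensional expression $J_k(\tau)$; a reference vector with any entry strictly inside $(0,1)$ would instead give a much richer subdifferential geometry, and correspondingly a more intricate bound.
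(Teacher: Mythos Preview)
Your proposal is correct and follows essentially the same strategy as the paper: both reduce the problem to the ALMT phase-transition bound via the statistical dimension of $\mathcal{D}(f,\mathds{1}_K)$, compute the subdifferential $\partial f(\mathds{1}_K)$, and evaluate the resulting Gaussian expectation to obtain $J_k(\tau)$. The only methodological difference is in the subdifferential step: the paper (Lemma~\ref{lem:subdiff}) verifies the two inclusions by hand directly from the definition, whereas you invoke the Moreau--Rockafellar sum rule for $f=\|\cdot\|_1+\iota_{[0,1]^N}$ and split coordinate-wise---a cleaner and more systematic route that yields the identical set $\prod_{i\in K}[1,\infty)\times\prod_{i\in K^c}(-\infty,1]$.
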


Before proving this theorem let us first discuss the behavior of the function $\Delta_{\text{bin}}$, which is plotted as the 
blue curve in Figure \ref{fig:StatBin}. 

\begin{proposition}\label{lemma2:StatBin}
Let $\mathbf{g}$ be a normally distributed random vector, and let $\Delta_{\text{bin}}(k)$, $k\ge0$, be defined as in Theorem \ref{mainthm1}. Then the following hold:
\begin{enumerate}
\item[(i)] $\Delta_{\text{bin}}(k)< N/2$ for all $0\le k< N/2$.
\item[(ii)] $\Delta_{\text{bin}}(k)=N/2$ for all $N/2\le k\le N$.
\end{enumerate}
\end{proposition}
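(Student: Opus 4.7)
The plan is to first pin down $J_k(0)$ exactly, which will yield $\Delta_{\text{bin}}(k) \le N/2$ uniformly in $k$, and then to analyze the first-order behavior of $J_k$ at $\tau = 0$ to settle (i), and finally to rewrite $J_k$ in a form that makes the sign of $2k - N$ manifest to settle (ii). By symmetry of the standard normal density, $\int_{-\infty}^0 u^2 \phi(u)\,du = \int_0^\infty u^2 \phi(u)\,du = \tfrac12$, and hence $J_k(0) = k/2 + (N-k)/2 = N/2$ for every $0 \le k \le N$. Therefore $\Delta_{\text{bin}}(k) \le N/2$ in all cases, and it only remains to decide when this bound is strict or an equality.

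For part (i), I would differentiate $J_k$ under the integral sign. The Leibniz boundary contributions vanish because $(u - \tau)^2$ is zero at $u = \tau$, and using the half-normal mean $\int_0^\infty u\phi(u)\,du = 1/\sqrt{2\pi}$ one obtains $J_k'(0) = 2(2k - N)/\sqrt{2\pi}$. For $k < N/2$ this derivative is strictly negative, so $J_k(\tau) < J_k(0) = N/2$ for all sufficiently small $\tau > 0$, proving $\Delta_{\text{bin}}(k) < N/2$.

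For part (ii), the key step is the identity $\int_{-\infty}^\infty (u - \tau)^2 \phi(u)\,du = 1 + \tau^2$, i.e., the second moment of a standard normal shifted by $\tau$. Using it to eliminate the tail integral over $(\tau, \infty)$, I obtain
\[ J_k(\tau) = (2k - N)\,I_1(\tau) + (N - k)(1 + \tau^2), \qquad I_1(\tau) := \int_{-\infty}^\tau (u - \tau)^2 \phi(u)\,du. \]
A direct differentiation gives $I_1'(\tau) = -2\int_{-\infty}^\tau (u-\tau)\phi(u)\,du \ge 0$, so $I_1$ is nondecreasing on $[0,\infty)$ with $I_1(0) = \tfrac12$. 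When $k \ge N/2$, both coefficients $2k - N$ and $N - k$ are nonnegative, so combined with $1 + \tau^2 \ge 1$ one obtains $J_k(\tau) \ge (2k-N)/2 + (N-k) = N/2$ for every $\tau \ge 0$. Together with the upper bound from the first step, this yields $\Delta_{\text{bin}}(k) = N/2$.

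The main obstacle is spotting the decomposition used in part (ii). Without it, one is faced with directly minimizing a sum of two truncated Gaussian second-moment integrals, and it is not at all transparent why the minimum should equal $N/2$ precisely in the regime $k \ge N/2$; with the decomposition, everything collapses to the elementary monotonicity of $I_1$ and the bound $1 + \tau^2 \ge 1$, with the sign of $2k - N$ dictating which of the two cases occurs.
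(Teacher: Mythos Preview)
Your proof is correct and follows essentially the same strategy as the paper's: compute $J_k(0)=N/2$, use the identity $\int_{-\infty}^{\infty}(u-\tau)^2\phi(u)\,du=1+\tau^2$ to reduce $J_k$ to a polynomial term plus a single tail integral, and then argue via monotonicity controlled by the sign of $2k-N$. The only difference is cosmetic: the paper eliminates the left tail and writes $J_k(\tau)=k(1+\tau^2)+(N-2k)\int_\tau^\infty(u-\tau)^2\phi(u)\,du$, whereas you eliminate the right tail and keep $I_1(\tau)=\int_{-\infty}^\tau(u-\tau)^2\phi(u)\,du$; and for part (i) you compute $J_k'(0)$ directly rather than via the decomposed pieces.
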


\begin{proof}
Let $J_k(\tau)$ be defined as in Theorem \ref{mainthm1} and note that 
\[
J_k(0)=k\int_{-\infty}^0u^2\phi(u)du+(N-k)\int^{\infty}_0u^2\phi(u)du=k/2+(N-k)/2=N/2\quad \mbox{for all } k=0,\dots, N,
\]
which in turn implies $\inf_{\tau\ge0}J_k(\tau)/N\in [0,1/2]$ for $k=0,\dots,N$.

We next rewrite $J_k$ as
\begin{eqnarray}\nonumber
 J_k(\tau)&=&k\int_{-\infty}^{\infty}(u-\tau)^2\phi(u)du+(N-2k)\int^{\infty}_{\tau}(u-\tau)^2\phi(u)du\\ \nonumber
 &=& k(1+\sqrt{2\pi}\tau^2)+(N-2k)\int^{\infty}_{\tau}(u-\tau)^2\phi(u)du\\ \label{eq:Jk}
 &=:& f_k(\tau)+g_k(\tau).
\end{eqnarray}
The function $f_k$ and $g_k$ satisfy
\begin{align}
 f'_k(\tau)=\sqrt{8\pi}k\tau >0 \quad \text{for} \quad \tau>0, \label{eq:f'k}
\end{align}
and
\begin{align}
 g'_k(\tau)=-2(N-2k)\int_{\tau}^{\infty}(u-\tau)\phi(u)du\begin{cases}
                                                          > 0 & \text{for} \quad k>N/2\\
                                                          < 0 & \text{for} \quad k<N/2\label{eq:g'k}\\
                                                          = 0 & \text{for} \quad k=N/2.
                                                          \end{cases}
\end{align}
Thus, by \eqref{eq:f'k} and \eqref{eq:g'k}, for $k\ge N/2$ the function $J_k$ is monotonically increasing on $[0,\infty)$ and the infimum is attained in 
$\tau=0$. Application of the definition of $\Delta_{\text{bin}}(k)$ (see \eqref{eqn:Deltak}), proves (ii). 

To show (i), notice that, if $k<N/2$, $f_k$ increases and $g_k$ decreases monotonically. Moreover,
\begin{align}
 f_k'(0)=0< 2(N-2k) =-g_k'(0).
\end{align}
Since $f_k'$ and $g_k'$ are continuous, the intermediate value theorem implies the existence of some $\tau^*> 0$ satisfying
\begin{align}
 f_k'(\tau)< -g_k'(\tau) \quad \text{for all} \quad \tau \in (0,\tau^*).
\end{align}
Thus, the function $J_k$ decreases on $\tau\in (0,\tau^*)$. We can hence conclude that $J_k(\tau)<J(0)=N/2$,
which is (i).
\end{proof}

\begin{figure}[ht]
\begin{center}
\includegraphics[scale=0.5]{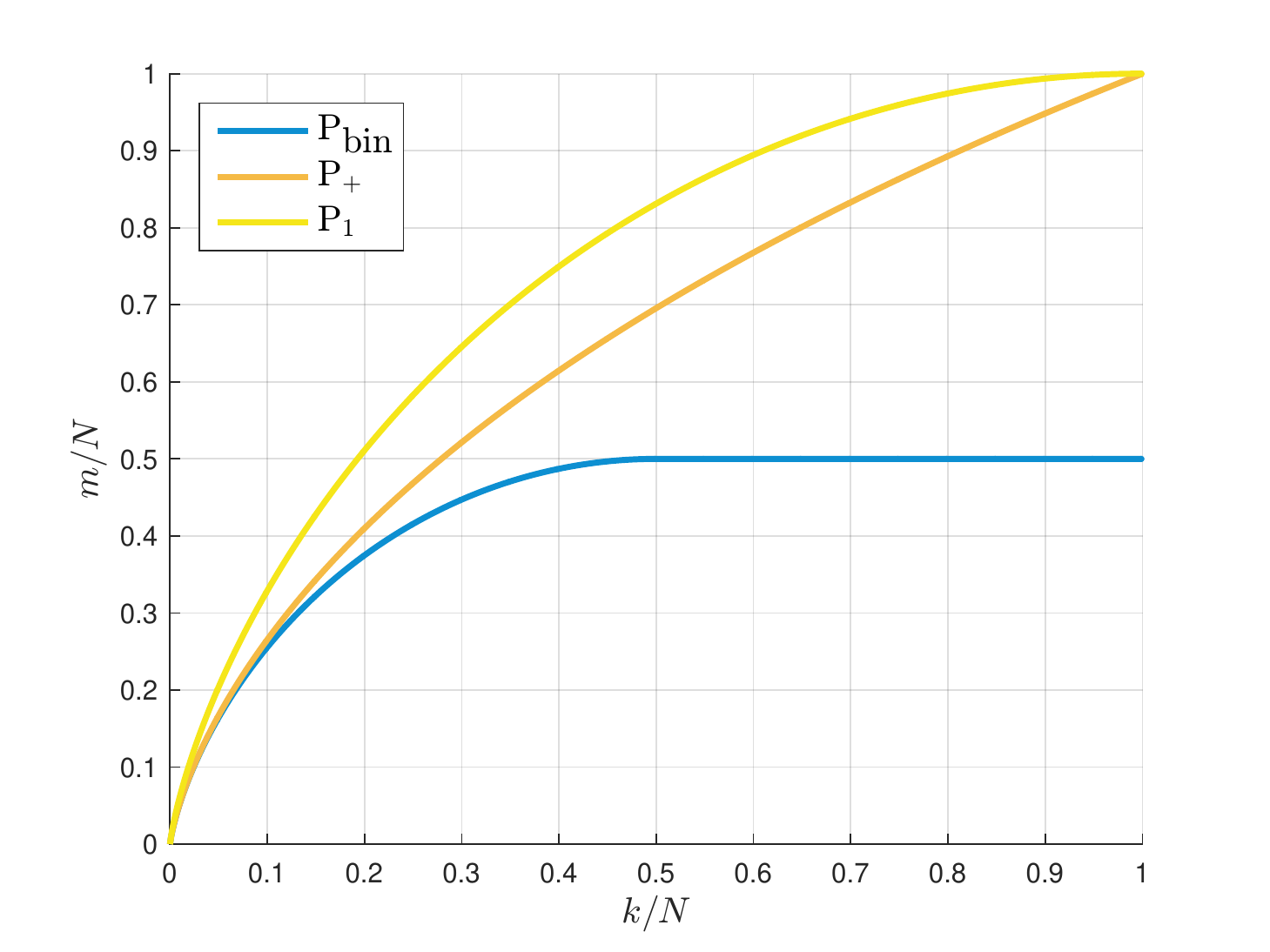}\includegraphics[scale=0.5]{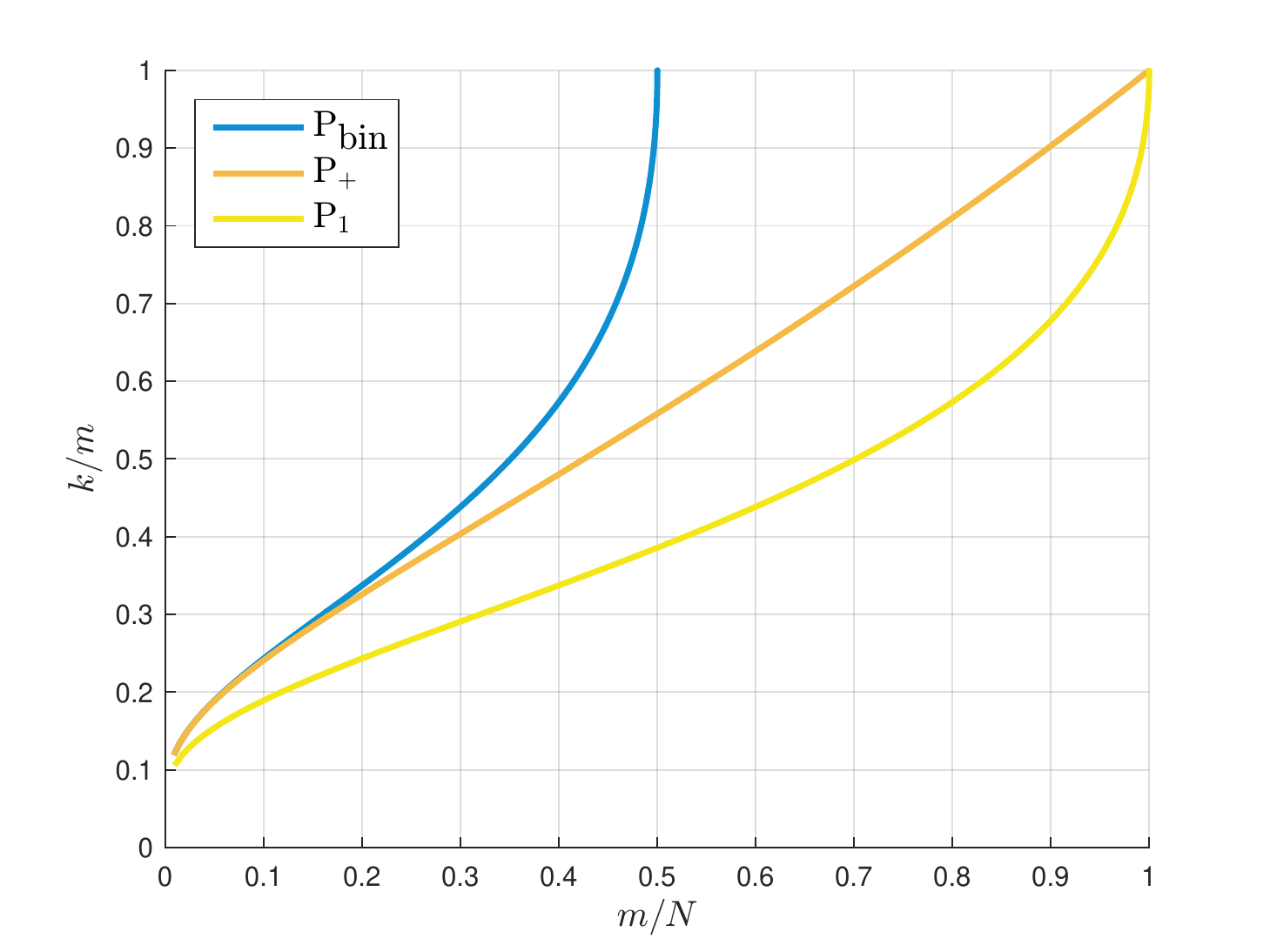}
\put(-320,-15){(a)}
\put(-108,-15){(b)}
\caption{Phase transition of the convex programs \protect\eqref{P1}, \protect\eqref{P+} and \protect\eqref{Pbin}. For the convenience of the reader, the following two illustrations are provided:
Successful recovery related to the area above the curves in (a), and below the curves in (b).}
\label{fig:StatBin}
\end{center}
\end{figure}

As a first step to prove Theorem \ref{mainthm1}, we describe the subdifferential $\partial f(\mathds{1}_K)$.

\begin{lemma}\label{lem:subdiff}
For $f$ defined by \eqref{eq:fx},
\begin{align}
\partial f(\mathds{1}_K) =\{s\in \R^N: s_i \ge 1 \text{ for } i\in K \text{ and } s_i\le 1 \text{ for } i\in K^C \}.
\end{align}
\end{lemma}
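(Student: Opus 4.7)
The plan is to work directly from the definition of the subdifferential, exploiting the fact that $f$ is the $\ell_1$-norm plus the indicator of the box $[0,1]^N$. First, I note that whenever $y\notin[0,1]^N$ we have $f(y)=\infty$, so the subgradient inequality $f(y)-f(\mathds{1}_K)\geq\langle s,y-\mathds{1}_K\rangle$ is vacuous; it suffices to test the inequality on $y\in[0,1]^N$. For such $y$, $f(y)=\|y\|_1=\sum_i y_i$ and $f(\mathds{1}_K)=k=\sum_{i\in K}1$, so after splitting the inner product into its contributions from $K$ and $K^C$ and bringing everything to one side, the inequality rearranges to
\begin{align}
\sum_{i\in K}(1-s_i)(y_i-1)+\sum_{i\in K^C}(1-s_i)\,y_i\ \geq\ 0\quad\text{for all }y\in[0,1]^N.
\end{align}

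For the necessity direction (``$\subseteq$''), I would probe this inequality with vectors that deviate from $\mathds{1}_K$ in only a single coordinate. Fixing an index $j\in K$ and setting $y_i=\mathds{1}_K(i)$ for $i\neq j$ while letting $y_j$ range over $[0,1]$ reduces the condition to $(1-s_j)(y_j-1)\geq 0$; since $y_j-1\leq 0$ can be made strictly negative, this forces $s_j\geq 1$. The symmetric choice with $j\in K^C$ reduces the condition to $(1-s_j)y_j\geq 0$, which forces $s_j\leq 1$.

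For sufficiency (``$\supseteq$''), if $s_i\geq 1$ on $K$ and $s_i\leq 1$ on $K^C$, then on $K$ both factors $(1-s_i)$ and $(y_i-1)$ are $\leq 0$, while on $K^C$ both factors $(1-s_i)$ and $y_i$ are $\geq 0$; hence every summand in the displayed inequality is nonnegative for any $y\in[0,1]^N$, which verifies the subgradient condition.

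There is no real obstacle here; the only thing to be careful about is the algebraic rearrangement that isolates the $K$- and $K^C$-blocks so that the necessity argument can be reduced to one-coordinate perturbations. As a sanity check, the same result can be obtained via the sum rule $\partial f(\mathds{1}_K)=\partial\|\cdot\|_1(\mathds{1}_K)+N_{[0,1]^N}(\mathds{1}_K)$ (applicable since the relative interior of $[0,1]^N$ is nonempty): the $\ell_1$-subdifferential contributes $s_i=1$ on $K$ and $s_i\in[-1,1]$ on $K^C$, while the normal cone to the box at $\mathds{1}_K$ contributes $n_i\in[0,\infty)$ on $K$ and $n_i\in(-\infty,0]$ on $K^C$, and the Minkowski sum reproduces exactly the set claimed.
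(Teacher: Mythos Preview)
Your proof is correct and follows essentially the same route as the paper: reduce to $y\in[0,1]^N$, verify sufficiency by checking the sign of each summand, and obtain necessity by probing with one-coordinate perturbations of $\mathds{1}_K$ (the paper uses exactly $y^j=\mathds{1}_K\pm e_j$). Your rearrangement into the form $\sum_{i\in K}(1-s_i)(y_i-1)+\sum_{i\in K^C}(1-s_i)y_i\ge 0$ is slightly cleaner than the paper's, and the sum-rule sanity check via $\partial\|\cdot\|_1(\mathds{1}_K)+N_{[0,1]^N}(\mathds{1}_K)$ is a nice addition not present in the paper.
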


\begin{proof}
First notice that, for given $y\notin [0,1]^N$, the inequality $f(y)\ge f(\mathds{1}_K) + \langle s, y-\mathds{1}_K\rangle$ is vacuously true
for all $s\in\R^N$. This implies that the subdifferential of $f$ simplifies to
\begin{align}\label{subBin}
\partial f(\mathds{1}_K)=\{s\in \R^N: \|y\|_1\ge \|\mathds{1}_K\|_1 + \langle s, y-\mathds{1}_K \rangle \text{ for all } y\in [0,1]^N\}.
\end{align}
Thus
\begin{align}
\partial f(\mathds{1}_K) &=\{s\in \R^N: k\le \sum_{i=1}^N (1-s_i) y_i + \sum_{i\in K}s_i,  y\in [0,1]^N \}.
\end{align}
Next, without loss of generality, we assume that $K=\{1,\dots, k\}$, and, for the sake of brevity, define
\[
S := \{s\in \R^N: s_i \ge 1 \text{ for } i\in K \text{ and } s_i\le 1 \text{ for } i\in K^C \}.
\]
To prove $S \subseteq \partial f(\mathds{1}_K)$, let $s\in S$ and observe
\begin{align}
\sum_{i=1}^N (1-s_i) y_i + \sum_{i=1}^ks_i
&=\sum_{i=1}^k(1-s_i)y_i + \sum_{i=k+1}^N(1-s_i)y_i + \sum_{i=1}^ks_i \\
&\ge \sum_{i=1}^k(1-s_i)y_i + \sum_{i=1}^ks_i \\&\ge \sum_{i=1}^k(1-s_i) + \sum_{i=1}^ks_i=k,
\end{align}
where we used in the first inequality that $(1-s_i)y_i\ge 0$ for $i=k+1,\dots, N$, and in the last inequality that $(1-s_i)y_i\ge (1-s_i)$ for $i=1,\dots, k$.
Thus, indeed, $S \subseteq \partial f(\mathds{1}_K)$.

To prove  the other inclusion, i.e., $\partial f(\mathds{1}_K) \subseteq S$, we consider the following points $y^j\in [0,1]^N$, $j=1,\dots,N$:
\begin{align}
y^j= \left\{
   \begin{array}{ll} \sum_{i=1}^k e_i - e_j:  \quad &  j = 1, \cdots , k ,\\[3pt]
                     \sum_{i=1}^k e_i + e_j: \quad & j = k+1 , \cdots , N.
   \end{array}
   \right.
\end{align}
Since $y^j\in [0,1]^N$ for all $j$, every $s \in \partial f(\mathds{1}_K)$ must satisfy
\begin{align}\label{sub1}
k\le \sum_{i=1}^N (1-s_i) y^j_i + \sum_{i=1}^ks_i, \quad j=1,\dots, N.
\end{align}
For $j=1, \dots, k$, this inequality is equivalent to
\begin{align}
k\le k-1-\sum_{i=1, i\neq j}^ks_i+\sum_{i=1}^k s_i,
\end{align}
which yields equivalently $s_j \geq 1$. And for $j=k+1,\dots, N$, we have
\begin{align}
k\le k+1- \sum_{i=1}^ks_i -s_j +\sum_{i=1}^k s_i,
\end{align}
which is equivalent to $s_j \le 1$. This implies $\partial f(\mathds{1}_K) \subseteq S$. The claim is proven.
\end{proof}

The statistical dimension of $D(f,x_0)$ can now be bounded by the infimum of the function
\[
J_{k}(\tau):= \mathbb{E}[\dist{\mathbf{g}}{\tau\partial f(\mathds{1}_K)}^2] \quad \mbox{for } \tau \ge 0,
\] 
where $\mathbf{g}\sim\text{\footnotesize{NORMAL}}(0,\mathbf{I})$. The computation of $\dist{\mathbf{g}}{\tau\partial f(\mathds{1}_K)}^2$ 
is quite straightforward, wherefore we omit it, and leads to
\begin{equation} \label{eq:g}
\dist{\mathbf{g}}{\tau\partial f(\mathds{1}_K)}^2
=\|u\|_{2}^2, \quad \text{ with }
u_i = \left\{
   \begin{array}{ll} \max\{\tau-g_i,0\}: \quad &  i \in K ,\\
                     \max\{g_i-\tau,0\}: \quad & i \in K^c .
   \end{array}
   \right.
\end{equation}

This finally allows us to prove Theorem \ref{mainthm1}.
\begin{proof}[Proof of Theorem \ref{mainthm1}]
Equation \eqref{eqn:Jktau} follows immediately by taking the expected value of \eqref{eq:g}. As just discussed,
the statistical dimension of the cone can now be bounded by its infimum, which yields Equation \eqref{eqn:Deltak}. 
Finally, using the results from \cite{ALMT}, the sufficient number of measurements to uniquely recover a unipolar binary sparse signals (Equation \eqref{eqn:m}), follows.
\end{proof}

For the recovery of unipolar binary signals, we observed a phase transition which increases monotonically in the sparsity $k$ 
and becomes constant in the range $k\in [N/2,N]$. However, we have not incorporated the symmetry of the signal in the recovery 
algorithm. The next remark provides an idea how to incorporate this.

\begin{remark}\rm
Note that if the sparsity of a unipolar binary
signal $x_0$ exceeds $N/2$ and if we consider the ``mirrored''  binary basis pursuit
\begin{align}
 \min\|x\|_1 \text{ subject to } Ax=A\mathds{1}_{[N]}-b \text{ and } x\in [0,1]^N \tag{P$_{\text{Mbin}}$} \label{PbinS},
\end{align}
the vector $\mathds{1}_{[N]}-x_0$ is a unipolar binary $N-k$-sparse vector which is feasible for $Ax=A\mathds{1}_N-Ax_0$. Thus, if the sparsity $k$ of $x_0$ is
larger than $1/2$ we can recover $x_0$ with high probability if the number of measurements $m$ is larger than $\Delta_{\text{bin}}(N-k)$.

This shows that running \eqref{Pbin} in the case $k\le N/2$ and \eqref{PbinS} in the case $k>N/2$, we
obtain a reverse parabola for the phase transition (cf. Figure \ref{fig:GespiegeltBin}~(a)). If we do not know the sparsity level in advance, we
propose to run both algorithms \eqref{Pbin} and \eqref{PbinS} to compute the solutions $x^1,x^2$, and choose the $x^i$ which is closest to be unipolar binary,
i.e., the solution of $\argmax_{i=1,2}\|x^i-\rnd{(x^i)}\|_1$. Numerically this approach seems to be very promising (cf. Figure \ref{fig:GespiegeltBin}~(b)).

\begin{figure}[ht]
\begin{center}
\includegraphics[scale=0.4]{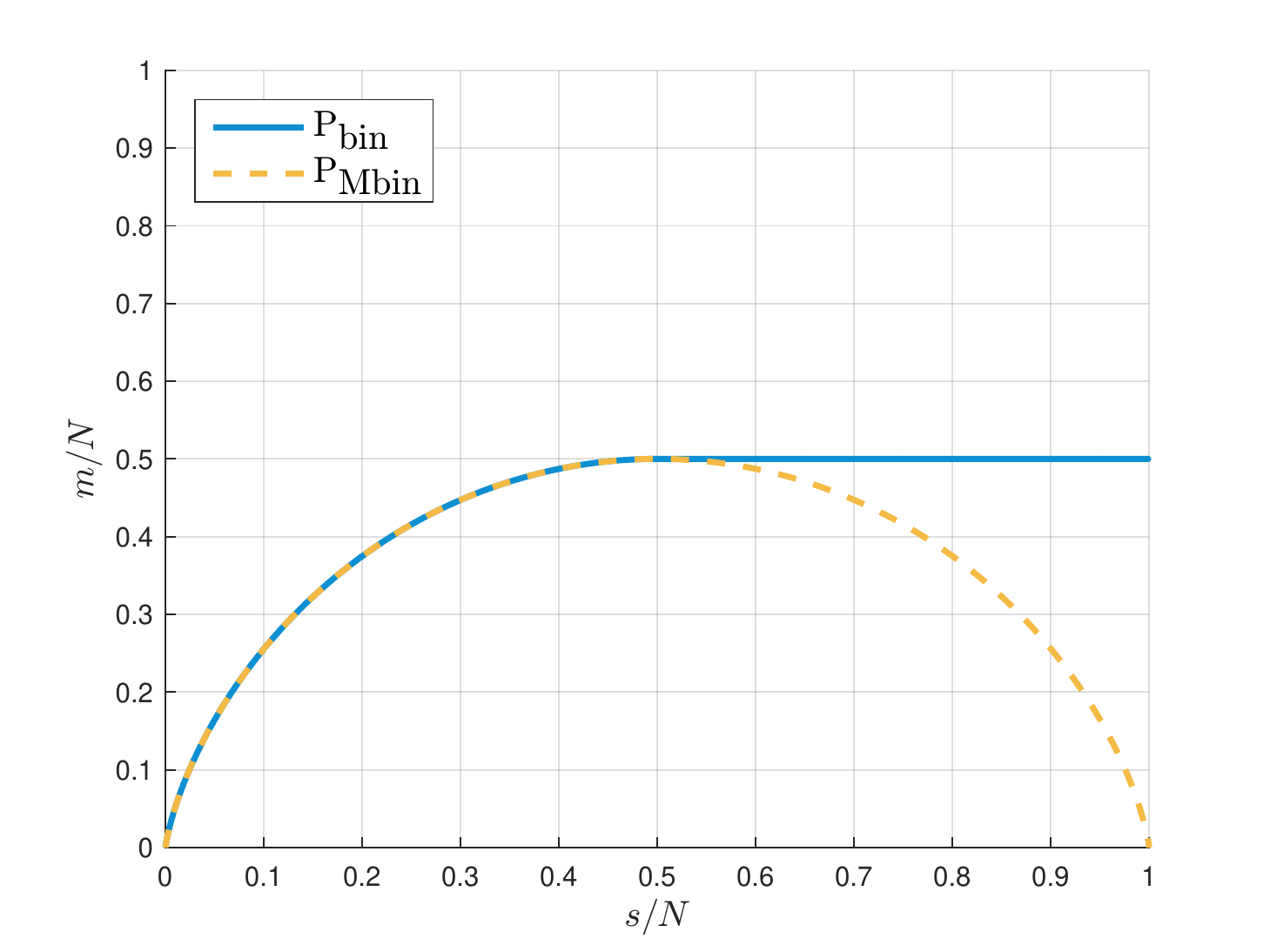}\hspace{1.5cm}
\includegraphics[scale=0.3]{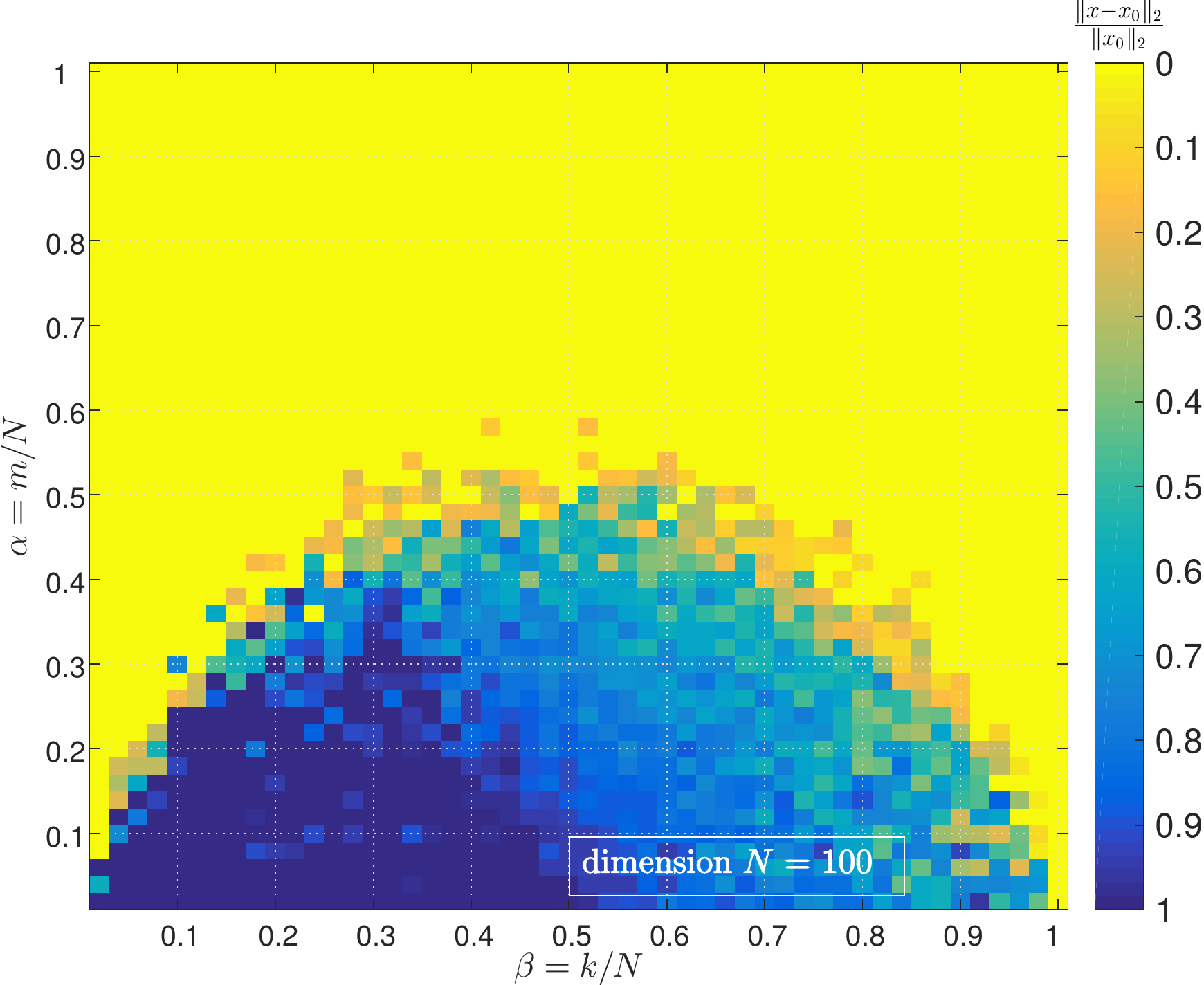}
\put(-288,-15){(a)}
\put(-84,-15){(b)}
\caption{Reconstruction error of the adapted version of \protect\eqref{Pbin}, which runs both \protect\eqref{Pbin} and \protect\eqref{PbinS} and chooses the solution which is closest to 
be unipolar binary. (a): Numerically determined phase transition of \protect\eqref{PbinS}; (b): Numerical experiments.} \label{fig:GespiegeltBin}
\end{center}
\end{figure}
\end{remark}

\subsection{Robust Binary NSP}

In realistic situations, signals cannot be measured with infinite precision. This raises the question of stability of a recovery algorithm with
respect to noisy measurements, i.e., in case the measurement vector $b$ is only an approximation of the vector $Ax$ with $\|Ax-b\|\le \eta$ for
some $\eta\ge 0$ and some norm $\|\cdot\|$. The canonical way to tackle this problem numerically, is to replace the constraint $Ax=b$ in \eqref{Pbin}
by $\|Ax-b\|\le \eta$ (cf. \cite{FR}). In our situation, this then yields the following algorithm, which we refer to as \emph{binary basis
pursuit with inequality constraints}:
\begin{align}
\label{BBPDenoising}
\min\|z\|_{1} \quad \text{subject to} \quad \|Az - y \|_{2} \le \eta \quad \text{and} \quad z\in [0,1]^N \tag{P$_{\text{bin}}^{\eta}$}.
\end{align}
Theorem \ref{thm:BBPDS} will show that indeed \eqref{BBPDenoising} is robust provided that the sensing matrix $A\in \R^{m \times N}$ fulfills the
following adapted B-NSP condition:

\begin{definition}
A matrix $A \in \R^{m \times N}$ is said to satisfy the \emph{robust binary null space property} with constants $0 < \rho < 1$ and $\tau > 0$
relative to a set $K \subset [N]$, if \noeqref{RBNSP}
\begin{align}
- \sum_{i \in K} v_i\le  \rho \sum_{i \in K^C} v_i + \tau \| Av\|_2 \label{RBNSP}\tag{RB-NSP}
\quad
\textnormal{for any } v \in H_K \cap \ker(A).
\end{align}
\end{definition}

In preparation of our robustness theorem (Theorem \ref{thm:BBPDS}), the following lemma provides some equivalent conditions for RB-NSP to hold.

\begin{lemma}
\label{thm_robust_NSP_z_between_0_alpha}
For $A \in \R^{m \times N}$, $K \subset [N]$, $0 < \rho < 1$, and $\tau > 0$, the following conditions are equivalent.
\begin{itemize}
\item[(i)]
$A$ satisfies RB-NSP with $\rho$ and $\tau$ relative to $K$.
\item[(ii)]
For any $x, z \in \R^{N}$ with $z-x\in H_K$,
\begin{align}
\sum_{i\in K}x_i - \sum_{i\in K} z_i\le \rho \|(z-x)_{K^C}\|_1+ \tau\|A(z-x)\|_2.
\end{align}
\item[(iii)]
For any $z \in \R^{N}$ with $0 \leq z_i \le 1$,
\begin{align} |K| - \|z_K\|_1\le \rho \|z_{K^C}\|_1+ \tau\|A(z - \mathds{1}_K) \|_2 .
\end{align}
\end{itemize}
\end{lemma}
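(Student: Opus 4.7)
The plan is to establish the three equivalences cyclically, $(i)\Rightarrow(ii)\Rightarrow(iii)\Rightarrow(i)$. In every step the key tool is that membership in $H_K$ converts signed sums into $\ell_1$-norms via the sign pattern $w_i\le 0$ on $K$ and $w_i\ge 0$ on $K^C$; each implication is then just a judicious substitution. (I read the hypothesis in RB-NSP as being for every $v\in H_K$, so that the $\tau\|Av\|_2$ term can actually be nonzero; otherwise the direction $(i)\Rightarrow(ii)$ cannot control non-kernel perturbations.)

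For $(i)\Rightarrow(ii)$, set $v:=z-x\in H_K$ and apply RB-NSP to $v$. Since $v_i\le 0$ on $K$ and $v_i\ge 0$ on $K^C$, one has $-\sum_{i\in K}v_i=\sum_{i\in K}(x_i-z_i)$ and $\sum_{i\in K^C}v_i=\|(z-x)_{K^C}\|_1$, and the RB-NSP inequality rewrites verbatim as the inequality in $(ii)$.

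For $(ii)\Rightarrow(iii)$, I would specialize to $x=\mathds{1}_K$. For any $z\in[0,1]^N$ one has $(z-\mathds{1}_K)_i=z_i-1\le 0$ on $K$ and $(z-\mathds{1}_K)_i=z_i\ge 0$ on $K^C$, so $z-\mathds{1}_K\in H_K$ and $(ii)$ applies. The identifications $\sum_{i\in K}x_i=|K|$, $\sum_{i\in K}z_i=\|z_K\|_1$ (by $z_i\ge 0$) and $(z-\mathds{1}_K)_{K^C}=z_{K^C}$ turn it into $(iii)$.

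For $(iii)\Rightarrow(i)$, take $v\in H_K$ arbitrary and choose $\lambda>0$ small enough that $z:=\mathds{1}_K+\lambda v\in[0,1]^N$ (possible by the sign pattern of $v$; e.g.\ any $\lambda\le 1/\|v\|_\infty$ works). Applying $(iii)$ to this $z$ and using $\|z_K\|_1=|K|+\lambda\sum_{i\in K}v_i$, $\|z_{K^C}\|_1=\lambda\sum_{i\in K^C}v_i$, and the positive homogeneity $\|A(z-\mathds{1}_K)\|_2=\lambda\|Av\|_2$, division by $\lambda>0$ returns the RB-NSP inequality for $v$. The only real subtlety is careful sign bookkeeping together with the observation that each of the three terms in RB-NSP is positively $1$-homogeneous in $v$, which is precisely what makes the last scaling argument legitimate.
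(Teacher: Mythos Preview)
Your proof is correct and follows essentially the same route as the paper: the paper also substitutes $v=z-x$ for (i)$\Leftrightarrow$(ii), specializes $x=\mathds{1}_K$ for (ii)$\Rightarrow$(iii), and uses the scaling $z_t=\mathds{1}_K+tv$ with $t>0$ small for (iii)$\Rightarrow$(i). Your reading of RB-NSP as ranging over all $v\in H_K$ (rather than $H_K\cap\ker(A)$) is also the paper's intended reading, as its own proof of (iii)$\Rightarrow$(i) takes ``$v\in H_K\setminus\{0\}$'' and keeps the $\tau\|Av\|_2$ term throughout; the ``$\cap\ker(A)$'' in the definition is evidently a typo.
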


\begin{proof}
The equivalence of (i) and (ii) is straightforward by taking $v = z - x$. Also, (ii) $\Rightarrow$ (iii) is immediate by choosing $x = \mathds{1}_K$.

Thus it remains to show that (iii) implies (i). The condition \eqref{RBNSP} holds trivially for $v = 0$; hence we assume that $v \in H_K \setminus \{ 0 \}$.
For sufficiently small $t > 0$, we have $z_t= \mathds{1}_K + tv \in [0,1]^N$. This shows that (iii) implies
\begin{align}
 |K|-\| (\mathds{1}_K + tv)_K  \|_1\le \rho\sum_{i \in K^C} t v_i+ \tau \| A (tv) \|_2 ,
\end{align}
which is equivalent to
\begin{align}
-t\sum_{i\in K}v_i \le t\rho \sum_{i\in K^C} v_i + t\tau \|Av\|_2.
\end{align}
But this in turn is equivalent to \eqref{RBNSP}.
\end{proof}

Now we are ready to prove that the RB-NSP is sufficient to ensure robust recovery of \eqref{BBPDenoising}. We remark that the following result quantifies
the $\ell_1$-error of the approximation as well.

\begin{theorem}
\label{thm:BBPDS}
Let $A \in \mathbb{R}^{m \times N}$ satisfy the RB-NSP with constants $0 < \rho < 1$ and $\tau > 0$ relative to a set $K \subset [N]$. Further,
assume the measurements satisfy $b = A (\mathds{1}_K) + e$, where the noise $e \in \mathbb{R}^N$ satisfies $\|e\|_{2} \le \eta$. Then a
solution $\hat{z}$ of \eqref{BBPDenoising} approximates $\mathds{1}_K$ with $\ell_1$-error
\begin{align}
\| \hat{z} -\mathds{1}_K \|_1 \leq \frac{4 \tau }{1 - \rho} \eta.
\end{align}
\end{theorem}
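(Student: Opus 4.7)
My plan is to compare $\hat{z}$ to the feasible candidate $\mathds{1}_K$, let $v := \hat{z}-\mathds{1}_K$, show that $v$ lives in the cone $H_K$, derive from the $\ell_1$-minimality of $\hat{z}$ that $\sum_{i\in K^C}v_i \le -\sum_{i\in K}v_i$, and then feed $v$ into the RB-NSP. The factor $4/(1-\rho)$ will come out once we combine these two inequalities with the triangle-inequality bound $\|Av\|_2\le 2\eta$.

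First I would observe that $\mathds{1}_K\in[0,1]^N$ and $\|A\mathds{1}_K-b\|_2=\|e\|_2\le\eta$, so $\mathds{1}_K$ is feasible for \eqref{BBPDenoising}. Minimality of $\hat{z}$ therefore gives $\|\hat{z}\|_1\le\|\mathds{1}_K\|_1=|K|$. Since $\hat{z}\in[0,1]^N$, setting $v=\hat{z}-\mathds{1}_K$ yields $v_i=\hat{z}_i-1\le 0$ for $i\in K$ and $v_i=\hat{z}_i\ge 0$ for $i\in K^C$, hence $v\in H_K$. Splitting $\|\hat{z}\|_1$ over $K$ and $K^C$ gives
\begin{align}
\sum_{i\in K^C} v_i \;=\; \sum_{i\in K^C}\hat{z}_i \;\le\; |K|-\sum_{i\in K}\hat{z}_i \;=\; -\sum_{i\in K} v_i.
\end{align}

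Next I would bound $\|Av\|_2$ by the triangle inequality: since both $A\hat{z}$ and $A\mathds{1}_K$ lie within distance $\eta$ of $b$,
\begin{align}
\|Av\|_2 \;=\; \|A\hat{z}-A\mathds{1}_K\|_2 \;\le\; \|A\hat{z}-b\|_2+\|b-A\mathds{1}_K\|_2 \;\le\; 2\eta.
\end{align}
Now I apply \eqref{RBNSP} to $v\in H_K\cap\R^N$ (note RB-NSP is stated for $v\in H_K\cap\ker(A)$, but the equivalent form Lemma~\ref{thm_robust_NSP_z_between_0_alpha}(ii) with $x=\mathds{1}_K$, $z=\hat{z}$ gives exactly the inequality I need without requiring $v\in\ker(A)$):
\begin{align}
-\sum_{i\in K} v_i \;\le\; \rho\sum_{i\in K^C} v_i + \tau\|Av\|_2 \;\le\; \rho\sum_{i\in K^C} v_i + 2\tau\eta.
\end{align}

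Combining this with the $\ell_1$-minimality bound $\sum_{i\in K^C} v_i\le -\sum_{i\in K} v_i$ yields $(1-\rho)\bigl(-\sum_{i\in K} v_i\bigr)\le 2\tau\eta$, i.e.\ $-\sum_{i\in K} v_i\le \tfrac{2\tau\eta}{1-\rho}$. Finally, since $v\in H_K$ we have
\begin{align}
\|v\|_1 \;=\; -\sum_{i\in K} v_i + \sum_{i\in K^C} v_i \;\le\; 2\Bigl(-\sum_{i\in K} v_i\Bigr) \;\le\; \frac{4\tau}{1-\rho}\,\eta,
\end{align}
which is the claimed error bound. There is no real obstacle here; the only subtlety is verifying that RB-NSP can be invoked for a vector that is merely in $H_K$ (not in $\ker(A)$), which is exactly what the reformulation in Lemma~\ref{thm_robust_NSP_z_between_0_alpha} enables.
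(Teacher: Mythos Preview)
Your proof is correct and uses essentially the same ingredients as the paper's: feasibility of $\mathds{1}_K$, the cone membership $v\in H_K$, the RB-NSP via Lemma~\ref{thm_robust_NSP_z_between_0_alpha}, minimality of $\hat z$, and the triangle-inequality bound $\|Av\|_2\le 2\eta$. The only organizational difference is that the paper first derives the general bound $\|z-\mathds{1}_K\|_1 \le \tfrac{1+\rho}{1-\rho}\bigl(\|z\|_1-\|\mathds{1}_K\|_1\bigr)+\tfrac{2\tau}{1-\rho}\|A(z-\mathds{1}_K)\|_2$ for arbitrary $z\in[0,1]^N$ and then specializes to the minimizer, whereas you invoke minimality at the outset, yielding a slightly more direct computation.
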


\begin{proof}
Let $z \in \R^{N}$ satisfy $0 \le z_i \le 1$. Then Lemma \ref{thm_robust_NSP_z_between_0_alpha} implies
\begin{align}
\label{robustness_thm_binary_pf_ineq0}
|K| - \|z_K \|_1 \le \rho \|z_K\|_1 + \tau \| A (z -\mathds{1}_K) \|_2,
\end{align}
which is
\begin{align}
\label{robustness_thm_binary_pf_ineq2}
\|(z - \mathds{1}_K)_{K} \|_1 \le \rho \| z_{K^C} \|_1 + \tau \| A (z - \mathbf{1}_K) \|_2.
\end{align}
Using \eqref{robustness_thm_binary_pf_ineq0}, we have also
\begin{align}
\|\mathds{1}_K \|_1 - \| z \|_1 &= |K| - \| z_K \|_1 - \| z_{K^C} \|_1 \\
&\le \Big( \rho \|z_{K^C} \|_1  + \tau \| A (z - \mathds{1}_K) \|_2 \Big) - \| z_{K^C} \|_1 \\
&\le (\rho - 1) \| z_{K^C} \|_1  + \tau \| A (z -  \mathds{1}_K) \|_2,
\end{align}
which is equivalent to
\begin{align}
\label{robustness_thm_binary_pf_ineq1}
\| z_{K^C} \|_1\le \frac{1}{ 1- \rho } \left( \|z \|_1 - \| \mathds{1}_K \|_1  + \tau \| Az - \mathds{1}_K)\|_2 \right) .
\end{align}
Combining \eqref{robustness_thm_binary_pf_ineq2} and \eqref{robustness_thm_binary_pf_ineq1} we obtain
\begin{align}
\|z - \mathds{1}_K \|_1 &= \| (z - \mathds{1}_K)_{K} \|_1 + \| z_{K^C} \|_1 \\
&\le \Big( \rho \| \mathds{z}_{K^C} \|_1 + \tau \| A (z - \mathds{1}_K) \|_2 \Big) +  \| z_{K^C} \|_1\\
&= (1 + \rho) \|z_{K^C} \|_1 + \tau \| A(z -  \mathds{1}_K) \|_2 \\
&\le \frac{1+ \rho}{ 1- \rho } \Big( \| z \|_1 - \| \mathds{1}_K \|_1  + \tau \| A (z - \mathds{1}_K) \|_2 \Big) + \tau \| A (z - \mathds{1}_K) \|_2\\
&\le \frac{1+ \rho}{ 1- \rho } \Big( \|z \|_1 - \| \mathds{1}_K \|_1 \Big) + \frac{2 \tau }{ 1- \rho } \| A (z - \alpha \mathds{1}_K) \|_2 .
\end{align}
Now if $z = \hat{z}$ is a minimizer of \eqref{BBPDenoising}, then $\| \hat{z} \|_1 \le \| \mathds{1}_K \|_1$ and
\begin{align}
\| \hat{z} -  \mathds{1}_K \|_1 \le \frac{2 \tau }{ 1- \rho } \| A (\hat{z} -  \mathds{1}_K) \|_2 .
\end{align}
Finally, the desired result follows from $\| A (\hat{z} - \mathds{1}_K) \|_2 \leq \| A \hat{z} - y \|_2 +  \| y - A ( \mathds{1}_K) \|_2 \le 2 \eta$.
\end{proof}

\bigskip

\begin{remark}\rm
In the finite-valued setting, `stability' (with respect to the sparsity defect of a finite-valued signal) is not crucial as its entries
are assumed to be from a finite alphabet.
\end{remark}

\subsection{Phase Transition under Noisy Measurements}

As seen in the previous subsection, \eqref{BBPDenoising} is robust provided that the sensing matrix $A$ satisfies RB-NSP. However, this property is 
hard to verify in general. In the case of Gaussian matrices we can though again rely on the statistical dimension and utilize the results 
from Section \ref{sec:PhaseTransBin}. We will show that Gaussian matrices are in terms of the NSP well-suited for robust recovery of unipolar 
binary signals provided that $m$ is sufficiently large. 

Since the outcome of the adapted basis pursuit \eqref{BBPDenoising} is not necessarily an integer, in addition we utilize the finite nature of the 
signal and post-project it on the integers in the spirit of \cite{FK}. Hence, we consider the following algorithm:
\begin{align}
 \tilde{x}=\rnd(\hat{x}) \quad \text{with}\quad \hat{x}=\argmin\|x\|_1 \quad \text{subject to} \quad \|Ax-b\|_2 \le \eta \quad \text{and} \quad x\in [0,1]^N. \label{PbinDR}\tag{$P_{\text{bin}}^{\eta,\text{r}}$}
\end{align}

In the next theorem we will prove that a certain number of measurements is sufficient for \eqref{BBPDenoising} to succeed. This result
then directly yields a number of measurements for \eqref{PbinDR} to succeed as a special case. Indeed, if the solution $\hat{x}$ of \eqref{BBPDenoising} satisfies $\|\mathds{1}_K-\hat{x}\|_{\infty} < 1/2$, then $|(\mathds{1}_K-\hat{x})_i|<1/2$ for all $i$ and, hence, rounding provides the exact solution $\mathds{1}_K$.

\begin{theorem}\label{thm:PhaseTransNoise}
Let $x=\mathds{1}_K$, $k=|K|$, and $A\in \R^{m \times N}$ be a Gaussian measurement matrix. Further, assume the the measurements satisfy $b=Ax+e$,
where $\|e\|_{2} \le \eta$, and let $0 < \varepsilon < 1$, $\tau >0$, and $m$ fulfill
 \begin{align}\label{mainthm2.1}
  \frac{m^2}{m+1}\ge \left(\sqrt{ln(\varepsilon^{-1})}+\sqrt{\Delta_{\text{bin}}(k)}+\tau\right)^2.
 \end{align}
Then, with probability at least $1-\varepsilon$, every minimizer $\hat{x}$ of \eqref{BBPDenoising} satisfies
\begin{align}
 \|x-\hat{x}\|_2\le \frac{2\eta}{\tau}.
\end{align}
In particular, if
 \begin{align}
  \frac{m^2}{m+1}> \left(\sqrt{ln(\varepsilon^{-1})}+\sqrt{\Delta_{\text{bin}}(k)}+4\eta\right)^2,
 \end{align}
then $x$ is the unique solution of \eqref{PbinDR}.
 \end{theorem}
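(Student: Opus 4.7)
The strategy is to combine the descent cone perspective that drove Theorem~\ref{mainthm1} with a Gordon-type lower bound on the minimum gain of a Gaussian matrix over a closed convex cone, as developed in \cite{ALMT}, and then to exploit feasibility of the true signal to bound the reconstruction error. The first task is to locate $\hat x - x$ in the descent cone $\mathcal D(f, x) = N_K \cap H_K$ computed in Section~\ref{sec:PhaseTransBin}. Since $x = \mathds 1_K \in [0,1]^N$ and $\|Ax - b\|_2 = \|e\|_2 \le \eta$, the signal $x$ is feasible for \eqref{BBPDenoising}, whence any minimizer $\hat x$ satisfies $f(\hat x) \le f(x) < \infty$. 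This forces $\hat x \in [0,1]^N$ and $\|\hat x\|_1 \le \|x\|_1$, placing $\hat x - x$ inside $\mathcal D(f, x)$.

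The second ingredient is the cone conditioning inequality from \cite{ALMT}: for Gaussian $A$ and a closed convex cone $C$,
\[
\mathbb P\!\left(\inf_{v \in C,\,\|v\|_2 = 1} \|Av\|_2 \ge \sqrt{m^2/(m+1)} - \sqrt{\delta(C)} - t\right) \ge 1 - e^{-t^2/2}.
\]
Applying this with $C = \mathcal D(f,x)$, using $\delta(C) \le \Delta_{\text{bin}}(k)$ from Theorem~\ref{mainthm1}, and setting $t = \sqrt{\ln \varepsilon^{-1}}$, the hypothesis \eqref{mainthm2.1} guarantees that on an event of probability at least $1 - \varepsilon$,
\[
\inf_{v \in \mathcal D(f,x),\,\|v\|_2 = 1} \|Av\|_2 \ge \tau.
\]
Combined with the feasibility estimate $\|A(\hat x - x)\|_2 \le \|A\hat x - b\|_2 + \|b - Ax\|_2 \le 2\eta$, obtained from the triangle inequality and the fact that both $x$ and $\hat x$ are feasible, this yields $\|\hat x - x\|_2 \le 2\eta/\tau$, which is the first conclusion.

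For the second conclusion, the strict inequality $\frac{m^2}{m+1} > (\sqrt{\ln \varepsilon^{-1}} + \sqrt{\Delta_{\text{bin}}(k)} + 4\eta)^2$ allows me to pick $\tau > 4\eta$ for which the first part applies, so $\|\hat x - x\|_\infty \le \|\hat x - x\|_2 \le 2\eta/\tau < 1/2$ for every minimizer $\hat x$. Since $x = \mathds 1_K$ is integer-valued, coordinate-wise rounding deterministically recovers $x$ from any such $\hat x$, proving that $\tilde x = x$ is the unique output of \eqref{PbinDR}. The main technical delicacy will be invoking the Gordon-type concentration with the paper's normalization $A = m^{-1/2}[a_{ij}]$ and verifying that the constants $\sqrt{m^2/(m+1)}$ and $\tau$ line up so that the hypothesis on $m$ translates into the stated error bound; everything else is a routine chaining of descent-cone identity, cone conditioning, and triangle inequality.
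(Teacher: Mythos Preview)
Your proposal is correct and follows essentially the same approach as the paper: reduce to a lower bound on $\inf_{v\in\mathcal D(f,x)\cap\mathbb S^{N-1}}\|Av\|_2$ via a Gordon-type concentration inequality, then combine with the triangle-inequality estimate $\|A(\hat x-x)\|_2\le 2\eta$. The only cosmetic differences are that the paper outsources the descent-cone localization and error bound to \cite[Thm.~4.36]{FR} rather than arguing directly as you do, and it reaches the concentration bound via \cite[Thm.~9.21]{FR} together with the Gaussian-width/statistical-dimension comparison $\ell(T)\le\sqrt{\delta(T)}$ from \cite{OT}, whereas you invoke a single inequality attributed to \cite{ALMT}; your treatment of the second conclusion (picking $\tau>4\eta$ from the strict inequality, rather than $\tau=4\eta$) is actually slightly cleaner than the paper's.
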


\begin{proof}
The last part follows from \eqref{mainthm2.1} by choosing $\tau=4\eta$ to obtain $\|x-\hat{x}\|_2<1/2$. 

By \cite[Thm. 4.36]{FR}, the first statement is true, if $\inf_{v\in\mathcal{D}_K\cap \mathbb{S}^{N-1}}\|Av\|_2 \ge \tau$, where  $\mathcal{D}_K:=\mathcal{D}(f,\mathds{1}_K)$. 
To show this inequality we use \cite[Thm. 9.21]{FR}, which states that
\begin{align}
 \mathbb{P}\left(\inf_{x\in T}\|Ax\|_2\le E_m-\ell(T)-t\right)\le e^{-t^2/2},
\end{align}
where $E_m$ can be bounded from below by $m/\sqrt{m+1}$ and $\ell(T)=\mathbb{E}\sup_{v\in T}\langle \mathbf{g},v\rangle$ for some subset $T\subset \R^N$ and $\mathbf{g}$ a 
standard Gaussian random vector. Thus, if
\begin{align}
  \frac{m}{\sqrt{m+1}}\ge \tau + \ell(\mathcal{D}_K\cap \mathbb{S}^{N-1})+\sqrt{2\ln(\varepsilon^{-1})},
 \end{align}
 we can conclude that $\inf_{v\in\mathcal{D}_K\cap \mathbb{S}^{N-1}}\|Av\|_2 \ge \tau$.
 The result now follows from  the fact that, for $T\subset \R$, we have $\ell(T)\le \sqrt{\partial(T)}\le
\sqrt{\ell(T)^2+1}$, which was shown in \cite[Remark 3.5]{OT} and from
\begin{align}\label{eq:bin}
 \Delta_{\text{bin}}(k)=\partial(\mathcal{D}_K)=\partial(\sigma(\mathcal{D}_K))=\partial(\mathcal{D}_K\cap \mathbb{S}^{N-1}),
\end{align}
where $\sigma$ denotes the spherical retraction, which is defined as $\sigma(t)=t/\|t\|_2$ for $t\neq 0$ and $\sigma(0)=0$. Notice that the third equation in \eqref{eq:bin} uses the fact that $\mathcal{D}_K$ is a convex cone.
\end{proof}

\section{Bipolar Ternary Signals}

In the last section we analyzed recovery of unipolar binary sparse signals using \eqref{Pbin}. The question remains whether the results can be extended to general
finite-valued signals. In preparation for the general results stated in Section \ref{sec:larger},
this section is dedicated to the study of signals with entries in $\{0, \pm1\}$, which we refer to as \emph{bipolar ternary sparse signals}. Interestingly,
this small extension by one more possible value already leads to several necessary adaptions both theoretically and numerically; and it will turn out that
only slightly weaker results hold. 

In the spirit of \eqref{Pbin}, for bipolar ternary signals we impose the additional constraint $x\in [-1,1]^N$ on basis pursuit, i.e., we use the
following recovery procedure:
\begin{align}\label{PM}
\min \|x\|_1 \text{ subject to } Ax=b \text{ and } x\in [-1,1]^N. \tag{$P_{\pm\text{ter}}$}
\end{align}

\subsection{Bipolar Ternary NSP}

We start by adapting the NSP to the situation of bipolar ternary sparse signals. The bipolar ternary NSP (BT-NSP) stated below can indeed be
shown to provide a necessary and sufficient condition on the sensing matrix for \eqref{PM} to succeed.

\begin{definition}
 A matrix $A\in \R^{m\times N}$ is said to satisfy the \emph{bipolar ternary NSP} with respect to some disjoint subsets $K_1, K_{-1}\subset [N]$, if
 \noeqref{STNSP}%
 \begin{align}
  \ker(A)\cap N_K\cap H_{K_1,K_{-1}} =\{0\} \label{STNSP}\tag{BT-NSP},
 \end{align}
where $H_{K_1,K_{-1}}=\{w\in \R^N: w_i\le 0 \text{ for all } i\in K_1, \text{ and } w_i\ge 0 \text{ for all } i\in K_{-1}\}$ and $K=K_{-1}\cup K_1$.
\end{definition}

An illustration of BT-NSP is provided in Figure \ref{fig:NSPPMBin}. For this, we choose a non-sparse signal to illustrate the advantage of the additional,
bipolar ternary structure. Indeed, if we, for example, wish to recover the non-sparse signal $x_0=[-1\;1]^T\in \R^2$ from one measurement via the classical 
approach \eqref{P1}, we will fail, since the $\ell_1$-ball $\{x\in \R^N:\|x\|_1\le \|x_0\|_1\}$ always contains another feasible solution. 
The $\ell_1$-ball intersected with the constraint set, however, does not need to contain another feasible solution (cf. Figure \ref{fig:NSPPMBin}). 
This intuitively implies that restriction to box constraints yields a high probability of success.

\begin{figure}[ht]
\begin{center}
 \includegraphics[scale=0.5]{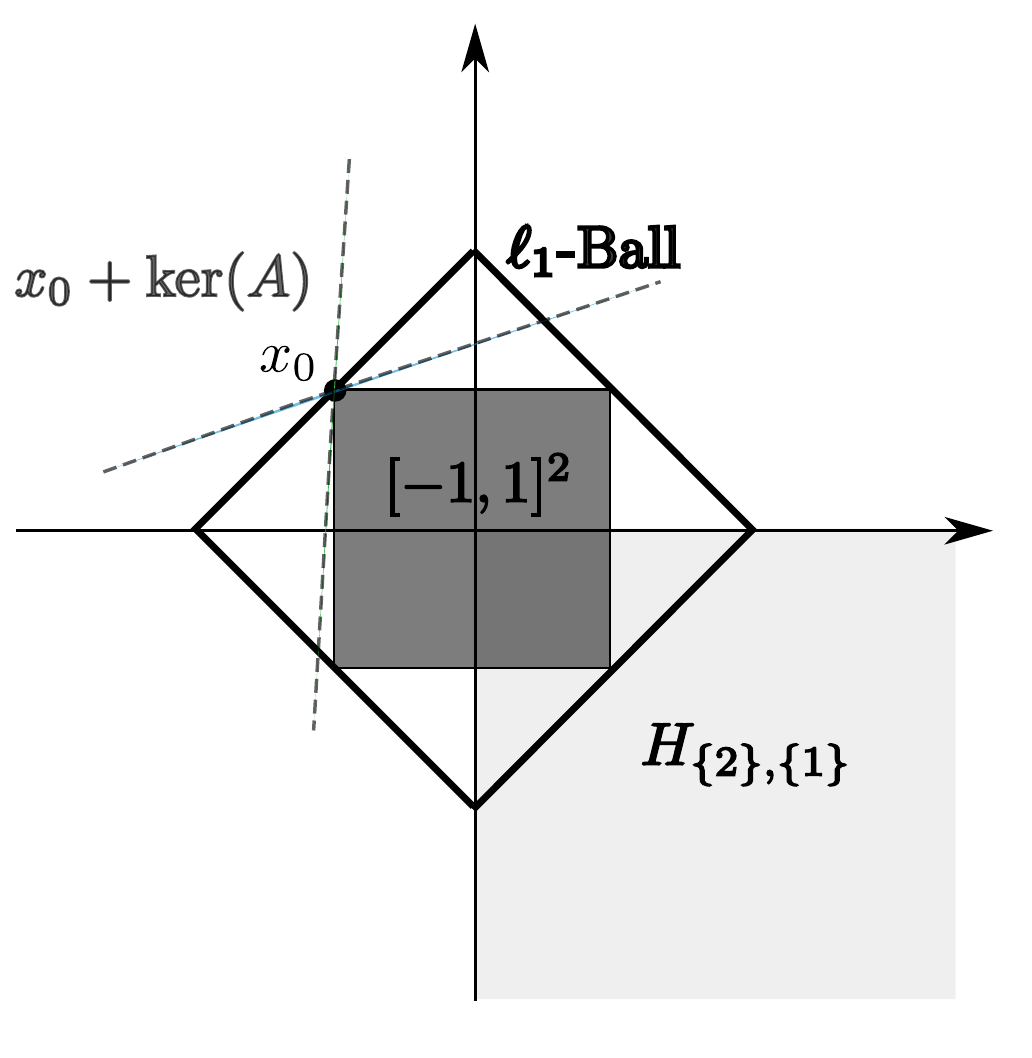}
 \caption{Geometrical interpretation of the BT-NSP. The NSP condition is equivalent to the condition that the kernel of $A$ does not intersect the descent cone $N_{\{2\}}\cap H_{\{2\},\{1\}}$ corresponding to \protect\eqref{Pbin}.}\label{fig:NSPPMBin}
 \end{center}
\end{figure}

The next theorem states that this condition is, indeed, equivalent to the success of \eqref{PM}.

\begin{theorem}\label{thm:BTNSP}
  Let $K_1,K_{-1}\subset [N]$ be two disjoint sets. Let further $x_{\pm1}=\mathds{1}_{K_1}-\mathds{1}_{K_{-1}}$ be a bipolar ternary $k$-sparse signal, 
  where $k=|K_1\cup K_{-1}|$. Then the following conditions are equivalent:
  \begin{enumerate}
  \item[(i)] $x_{\pm1}$ is the unique solution of \eqref{PM} with $b=A x_{\pm1}$.
  \item[(ii)] $A\in \R^{m\times N}$ satisfies BT-NSP with respect to sets $K_1$ and $K_{-1}$.
  \end{enumerate}
\end{theorem}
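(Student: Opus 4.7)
The plan is to replicate the structure of the proof of Theorem \ref{thm:BinNec}, with $\mathds{1}_K$ replaced by $x_{\pm 1}$ and the cone $H_K$ replaced by $H_{K_1,K_{-1}}$. The central object in both directions is the perturbation $u := x - x_{\pm 1}$, which lies in $\ker A$ whenever $x$ is feasible.

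For the implication (ii) $\Rightarrow$ (i), let $x$ be any feasible point of \eqref{PM}. Because $x_i \in [-1,1]$ while $(x_{\pm 1})_i = 1$ on $K_1$ and $-1$ on $K_{-1}$, the perturbation $u$ automatically belongs to $H_{K_1,K_{-1}}$. I then compare $\|x\|_1$ to $\|x_{\pm 1}\|_1 = k$ coordinate by coordinate: on $K^c$ one has $|x_i| = |u_i|$, while on $K$ a short case analysis yields the inequality $|x_i| - 1 \ge -|u_i|$. Summing gives
\begin{equation}
\|x\|_1 - \|x_{\pm 1}\|_1 \;\ge\; \sum_{i \in K^c} |u_i| - \sum_{i \in K} |u_i|.
\end{equation}
The BT-NSP then forces the right-hand side to be strictly positive whenever $u \ne 0$, establishing uniqueness.

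For (i) $\Rightarrow$ (ii), I argue by contraposition. If BT-NSP fails, I pick a nonzero $u \in \ker A \cap N_K \cap H_{K_1,K_{-1}}$ and, using that the intersection is a cone, rescale so that $\|u\|_\infty \le 1$. The sign information encoded in $H_{K_1,K_{-1}}$ guarantees that $x := x_{\pm 1} + u$ lies in $[-1,1]^N$ and is therefore feasible. In this regime the absolute values on $K$ open up explicitly as $|x_i| = 1 + u_i$ on $K_1$ and $|x_i| = 1 - u_i$ on $K_{-1}$, yielding
\begin{equation}
\|x\|_1 \;=\; k - \sum_{i\in K}|u_i| + \sum_{i\in K^c}|u_i| \;\le\; k,
\end{equation}
where the last inequality uses $u \in N_K$. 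Hence $x \ne x_{\pm 1}$ is an alternative minimizer.

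The main obstacle — and the only place where the bipolar ternary case genuinely departs from the binary one — is the forward direction: without any rescaling there is no a priori bound preventing $u_i$ from taking values in $[-2,0]$ on $K_1$ or in $[0,2]$ on $K_{-1}$, so the clean equality $|1+u_i| = 1 + u_i$ need not hold. The inequality $|1+u_i| - 1 \ge u_i = -|u_i|$ nevertheless does hold on all of $[-2,0]$ (and symmetrically on $K_{-1}$), which is precisely what the summation argument requires. This sign bookkeeping is the one new ingredient beyond the proof of Theorem \ref{thm:BinNec}; the reverse direction is then cleaner because the rescaling to $\|u\|_\infty \le 1$ puts the computation squarely in the regime where the absolute values linearize.
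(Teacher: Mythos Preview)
Your proof is correct and follows essentially the same approach as the paper's. The only cosmetic difference is in the (ii) $\Rightarrow$ (i) direction: the paper avoids your ``obstacle'' (the possibility $u_i \in [-2,-1)$ on $K_1$) by never computing $|x_i|$ on $K$ at all---it rewrites the strict BT-NSP inequality as $k < \sum_{i\in K^c}|x_i| + \sum_{i\in K_1} x_i - \sum_{i\in K_{-1}} x_i$ and then bounds the right-hand side by $\|x\|_1$ via $a \le |a|$, sidestepping the case distinction you needed for $|1+u_i|-1 \ge -|u_i|$.
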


\begin{proof}
We begin by proving (ii) $\Rightarrow$ (i). For this, assume that $A$ satisfies BT-NSP, and let $x$ be the solution of \eqref{PM} with $Ax_{\pm1}=b$.
Then $x-x_{\pm1}\in \ker{A}$. Using the fact that $x$ is the solution of \eqref{PM} and therefore all its entries lie between $-1$ and $1$, we
can conclude that
\begin{enumerate}
 \item[(a)] $(x-x_{\pm1})_i=x_i-1\le 0$ for $i\in K_1$, and 
 \item[(b)] $(x-x_{\pm1})_i=x_i+1\ge 0$ for $i\in K_{-1}$. 
\end{enumerate}
This shows that $x-x_{\pm1} \in H_{K_1,K_{-1}}$, and therefore the assumption (a)  implies
\begin{align}
 \sum_{i\in K}|(x-x_{\pm1})_i|\le\sum_{i\in K^C}|(x-x_{\pm1})_i|.
\end{align}
Due to properties (a) and (b), this is equivalent to
\begin{align}
  -&\sum_{i\in K_1}(x-x_{\pm1})_i+\sum_{i\in K_{-1}}(x-x_{\pm1})_i< \sum_{i\in K^C}|x_i|,
  \end{align}
which in turn holds if and only if
\begin{align}
  \quad\quad&\sum_{i\in K_1}(x_{\pm1})_i-\sum_{i\in K_{-1}}(x_{\pm1})_i< \sum_{i\in K^C}|x_i|+\sum_{i\in K_1}x_i-\sum_{i\in K_{-1}}x_i\le \|x\|_1.
  \end{align}
 This is equivalent to
 \begin{align}
  & \|x_{\pm1}\|_0=k=\|x_{\pm1}\|_1< \|x\|_1\le\|x\|_0,
\end{align}
implying that $x_{\pm1}$ is the unique minimizer.

Next, we turn to proving (i) $\Rightarrow$ (ii). For this, let $u\in \ker(A)\cap H_{K_1,K_{-1}}$. Notice that the linearity of $\ker(A)\cap H_{K_1,K_{-1}}$ implies
that we may assume $\|u\|_{\infty}\le1$. Thus $u_i\in [-1,0]$ for $i\in K_1$ and $u_i\in [0,1]$ for $i\in K_{-1}$. Next, set
\[
w:= x_{\pm1}+ u.
\]
By the previous considerations, it follows that $w\in [-1,1]^N$. Moreover, since $Aw=A(x_{\pm1}+u)=A(x_{\pm1})$, the vector $w$ is feasible for
\eqref{PM}. By assumption, $k=\|x_{\pm1}\|_1<\|w\|_1$ follows. Therefore
\begin{align}
 k< \sum_{i\in K_1}w_i-\sum_{i\in K_{-1}}w_i+\sum_{i\in K^C}|w_i|= \sum_{i\in K_1} 1 +u_i-\sum_{i\in K_{-1}}(-1+u_i)+ \sum_{i\in K^C}|u_i|,
 \end{align}
 which is equivalent to
 \begin{align}
  -\sum_{i\in K_1}u_i+\sum_{i\in K_{-1}}u_i+<\sum_{i\in K^C}|u_i|, \quad \mbox{i.e.,} \;  \|u_K\|_1<\|u_{K^C}\|_1.
 \end{align}
This shows that $u\notin N_K$, which proves (ii).
\end{proof}

\subsection{Phase Transition}\label{sec:PhaseTransPM}

In the previous subsection, we gave a necessary and sufficient condition for a sensing matrix to ensure unique recoverbility
of a bipolar ternary signal via \eqref{PM}. Our next step will be to study Gaussian matrices $A\in \R^{m \times N}$
and show that those satisfy this condition with high probability provided that $m$ is sufficiently large.

\begin{theorem}\label{mainthm2}
Fix a tolerance $\varepsilon >0$. Let $K\subset [N]$, $A\in \R^{m \times N}$ be Gaussian, and $b=Ax_{\pm1}$. Further set $k=|K|$. With
\begin{align}
 \Delta_{\pm\text{ter}}(k)=\inf_{\tau \ge 0} \left[ k\int^{\tau}_{-\infty}(u-\tau)^2\phi(u)du+ 2(N-k)\int_{\tau}^{\infty}(u-\tau)^2\phi(u)du \right],
\end{align}
the program \eqref{PM} will succeed
to recover $x_{\pm1}$ uniquely with probability larger than $1-\varepsilon$ provided that
\begin{align}
m\ge \Delta_{\pm\text{ter}}(k)+\sqrt{8\log(4/\varepsilon)N}.
\end{align}
\end{theorem}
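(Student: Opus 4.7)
The proof strategy parallels the binary case in Section~\ref{sec:PhaseTransBin}, with the essential difference appearing in the structure of the subdifferential. The plan is to recast the cone $\ker(A)\cap N_K\cap H_{K_1,K_{-1}}$ as a descent cone, bound its statistical dimension via the three-step recipe from \cite{ALMT} recalled in Section~\ref{sec:IntPhaseTrans}, and then apply the approximate kinematic formula to translate this bound into a measurement count.

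First I would define $f:\R^N\to\R_+$ by $f(x)=\|x\|_1$ on $[-1,1]^N$ and $f(x)=\infty$ elsewhere, and verify exactly as in the binary case that $\mathcal{D}(f,x_{\pm1})=N_K\cap H_{K_1,K_{-1}}$: indeed, if $f(x_{\pm1}+\tau y)\le f(x_{\pm1})$ then necessarily $x_{\pm1}+\tau y\in[-1,1]^N$, which forces $y_i\le 0$ for $i\in K_1$ and $y_i\ge 0$ for $i\in K_{-1}$, and the $\ell_1$ inequality then gives $y\in N_K$. By Theorem~\ref{thm:BTNSP} combined with the ALMT phase transition, it then suffices to upper-bound $\delta(\mathcal{D}(f,x_{\pm1}))$ by $\inf_{\tau\ge 0}\mathbb{E}[\dist{\mathbf{g}}{\tau\,\partial f(x_{\pm1})}^2]$.

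The core computation is the subdifferential $\partial f(x_{\pm1})$. Mirroring the proof of Lemma~\ref{lem:subdiff}, but now choosing the test vectors $y^j=x_{\pm1}\mp e_j$ for $j\in K_1\cup K_{-1}$ (interior perturbations of the active box-constraint coordinates) and $y^j=x_{\pm1}\pm e_j$ for $j\in K^C$ (perturbations in either direction, since $0$ is interior to $[-1,1]$), I expect to obtain
\begin{align}
\partial f(x_{\pm1})=\{s\in\R^N: s_i\ge 1 \text{ for } i\in K_1,\ s_i\le -1 \text{ for } i\in K_{-1},\ -1\le s_i\le 1 \text{ for } i\in K^C\}.
\end{align}
The main subtlety, and what I expect to be the main obstacle, lies here: the asymmetry between active indices (one-sided box constraint pins the subgradient component on one side) and inactive indices (two-sided $[-1,1]$ from the $\ell_1$-subgradient at $0$) is precisely what produces the factor $2$ multiplying $(N-k)$ in $\Delta_{\pm\text{ter}}(k)$.

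Once the subdifferential is in hand, the projection $\pi_{\tau\partial f(x_{\pm1})}(\mathbf{g})$ decouples coordinate-wise, giving
\begin{align}
\dist{\mathbf{g}}{\tau\partial f(x_{\pm1})}^2=\sum_{i\in K_1}\max\{\tau-g_i,0\}^2+\sum_{i\in K_{-1}}\max\{g_i+\tau,0\}^2+\sum_{i\in K^C}\max\{|g_i|-\tau,0\}^2.
\end{align}
Taking expectations and using the symmetry $\phi(u)=\phi(-u)$ to identify the $K_{-1}$-contribution with the $K_1$-contribution, and the $K^C$-contribution with $2\int_\tau^\infty(u-\tau)^2\phi(u)\,du$, yields exactly the function inside the infimum defining $\Delta_{\pm\text{ter}}(k)$. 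Finally, inserting $\Delta_{\pm\text{ter}}(k)$ into \cite[Thm.~II]{ALMT} (as in the proof of Theorem~\ref{mainthm1}) produces the claimed measurement bound $m\ge \Delta_{\pm\text{ter}}(k)+\sqrt{8\log(4/\varepsilon)N}$.
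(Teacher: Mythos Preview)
Your proposal is correct and follows essentially the same approach as the paper: the paper likewise defines $f(x)=\|x\|_1$ on $[-1,1]^N$ (and $+\infty$ outside), computes $\partial f(x_{\pm1})$ to be exactly the set you wrote using the same family of test vectors $y^j=x_{\pm1}\pm e_j$, obtains the coordinate-wise distance formula, and then invokes the symmetry of $\phi$ to merge the $K_1$ and $K_{-1}$ contributions and produce the factor $2$ on the $K^C$ term before applying \cite[Thm.~II]{ALMT}. Your explanation of \emph{why} the factor $2$ appears---the two-sided constraint $|s_i|\le 1$ at inactive indices versus the one-sided constraint at active indices---is in fact more explicit than the paper's own presentation.
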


For an illustration of the statistical dimension $\Delta_{\pm\text{ter}}(k)$ and therefore of the phase transition of \eqref{PM}, 
we refer to Figure \ref{fig:StatPM}. It may come as a surprise that we require more measurements $Ax_{\pm1}$ to recover bipolar ternary 
signals than to recover positive-valued signals; the reason being that one could shift the bipolar ternary signal to the positive axis, 
in which situation it seems to be less complex to recover a bipolar ternary signal than to recover a positive-valued signal. However, 
though the shifted signal $x_\pm=\mathds{1}_{[N]}$ is positive-valued, it does not need to be sparse.

\begin{figure}[ht]
\begin{center}
\includegraphics[scale=0.5]{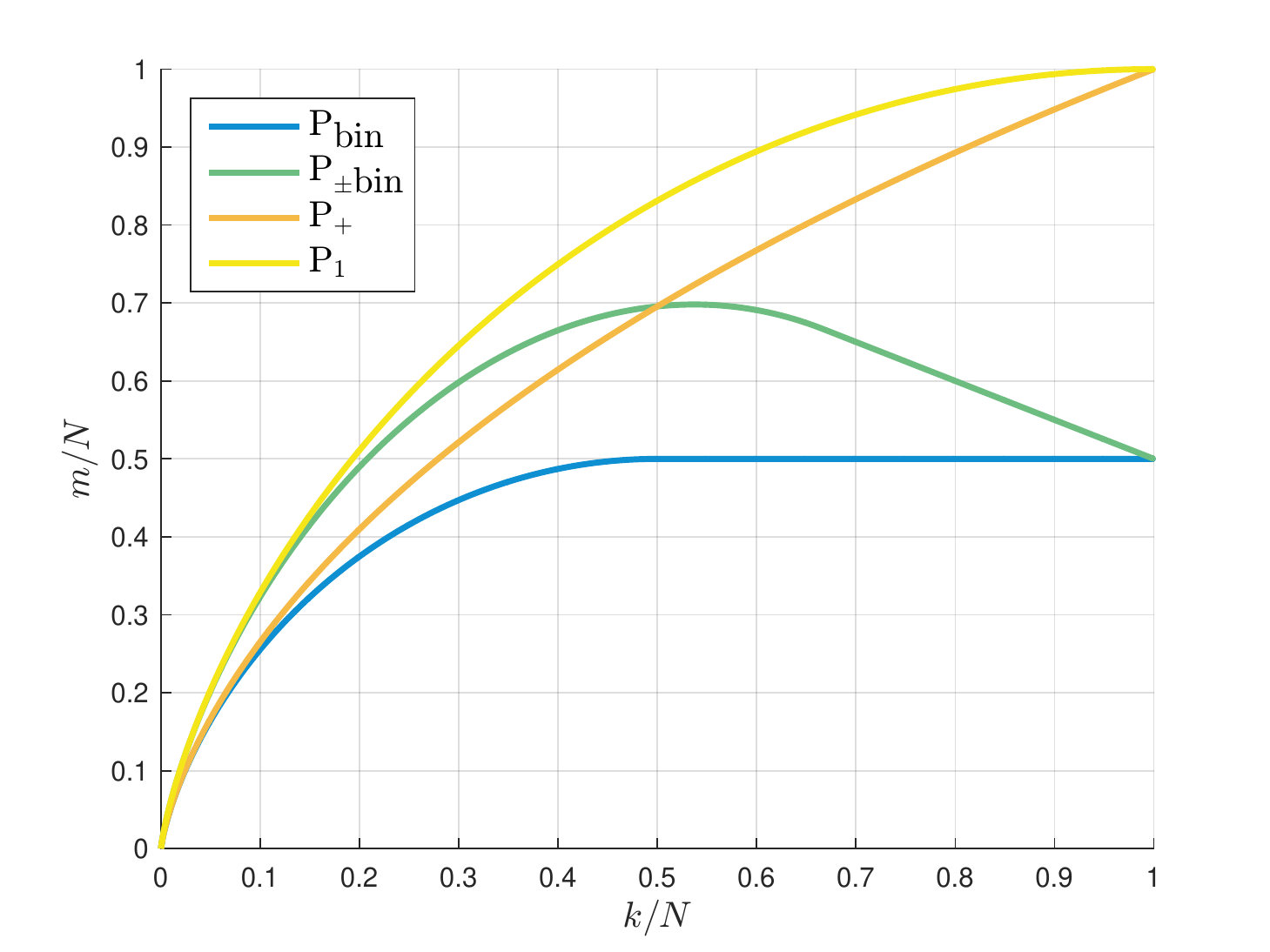}\includegraphics[scale=0.5]{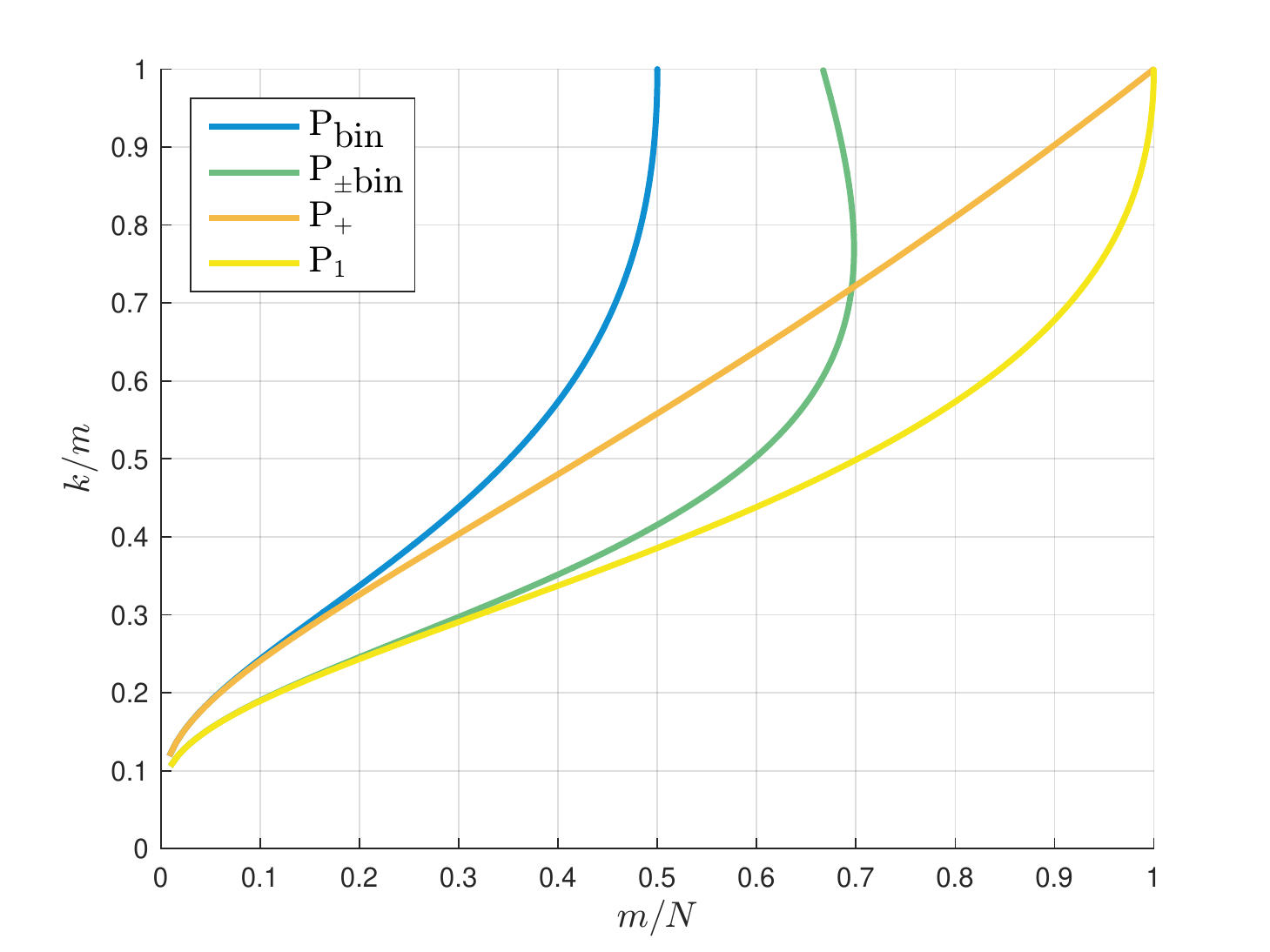}
\put(-320,-15){(a)}
\put(-108,-15){(b)}
\caption{Phase transition of the convex programs \protect\eqref{P1}, \protect\eqref{PM}, \protect\eqref{P+} and \protect\eqref{Pbin}. For the convenience of the reader, 
the following two illustrations are provided: Successful recovery related to the area above the curves in (a), and below the curves in (b).}
\label{fig:StatPM}
\end{center}
\end{figure}

To prove the previous result, we follow the same strategy as in the unipolar binary case and exploit properties of the statistical dimension. For this, 
we will consider the convex function $f$ given by
\begin{align} \label{eq:f_ter}
 f(x)=\begin{cases}
        \|x\|_1 & \text{ if } x\in [-1,1]^N,\\
        \infty & \text{ otherwise},
       \end{cases}
\end{align}
and describe its subdifferential in the next theorem.

\begin{theorem}
Let $f$ be defined as in \eqref{eq:f_ter}, and let $x_{\pm1} =\mathds{1}_{K_1}-\mathds{1}_{K_{-1}}$, where $K_1, K_{-1}$ are disjoint subsets of $[N]$. Then
\begin{align}\label{eqn:SubDiffTer}
\partial f(x_{\pm1})
=\{s\in \R^N: s_i\ge 1 \text{ for } i \in K_1 , \; s_i\le -1 \text{ for } i \in K_{-1}, \; |s_i|\le 1 \text{ for } i\in (K_{-1} \cup K_1)^C \}.
\end{align}
Furthermore, we obtain
\begin{align}
\delta(\mathcal{D}(f,x_{\pm1})) \le \Delta_{\pm\text{ter}}(k):=\inf_{\tau \ge 0} \left[ k\int^{\tau}_{-\infty}(u-\tau)^2\phi(u)du+ 2(N-k)\int_{\tau}^{\infty}(u-\tau)^2\phi(u)du \right],
\end{align}
where $k=|K_{-1}\cup K_{1}|$.
\end{theorem}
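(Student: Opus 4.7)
The plan is to follow the same two-step strategy as in the unipolar binary case (Lemma \ref{lem:subdiff} and the derivation leading to \eqref{eq:g}): first compute the subdifferential of $f$ at $x_{\pm 1}$ explicitly, and then use it to evaluate the expected squared distance $\mathbb{E}[\operatorname{dist}(\mathbf{g},\tau\,\partial f(x_{\pm1}))^2]$ coordinatewise, finally applying the general upper bound $\delta(\mathcal{D}(f,x_0))\le \inf_{\tau\ge 0} \mathbb{E}[\operatorname{dist}(\mathbf{g},\tau\,\partial f(x_0))^2]$ from \cite{ALMT}.

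For the subdifferential, since $f(y)=\infty$ outside $[-1,1]^N$, the defining inequality is trivially satisfied there, so it suffices to look at $y\in[-1,1]^N$, rewriting the condition as $\|y\|_1 \ge k + \sum_i s_i y_i - \sum_{i\in K_1} s_i + \sum_{i\in K_{-1}} s_i$. For the easy inclusion ($\supseteq$), I split the sum over $K_1$, $K_{-1}$ and $(K_1\cup K_{-1})^C$: on $K_1$, the bound $s_i\ge 1$ together with $y_i\le 1$ yields $(1-s_i)y_i \ge 1-s_i$; on $K_{-1}$, $s_i\le -1$ and $y_i\ge -1$ yield $(-1-s_i)y_i\ge -1-s_i$ after rearrangement; on the complement, $|s_i|\le 1$ gives $y_i-s_i y_i \ge -|y_i|\cdot 0$... actually a cleaner bookkeeping is to show $\|y\|_1-\langle s,y\rangle \ge k - \sum_{K_1}s_i + \sum_{K_{-1}}s_i = \|x_{\pm1}\|_1 - \langle s, x_{\pm 1}\rangle$ term by term. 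For the reverse inclusion, I plug in test vectors analogous to those in Lemma \ref{lem:subdiff}: for $j\in K_1$, $y^j = x_{\pm 1} - e_j$; for $j\in K_{-1}$, $y^j = x_{\pm 1} + e_j$; and for $j\in (K_1\cup K_{-1})^C$ both $y^j = x_{\pm 1} \pm e_j$ (this pair is what forces the two-sided constraint $|s_j|\le 1$). Each test vector lies in $[-1,1]^N$ and after simplification forces exactly one of the three inequalities in \eqref{eqn:SubDiffTer}.

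For the statistical dimension bound, the subdifferential $\tau\,\partial f(x_{\pm 1})$ is a Cartesian product of intervals, so the squared distance decomposes as a sum over coordinates:
\begin{align}
\operatorname{dist}(\mathbf{g},\tau\,\partial f(x_{\pm1}))^2
= \sum_{i\in K_1}\max\{\tau-g_i,0\}^2
+ \sum_{i\in K_{-1}}\max\{-\tau-g_i,0\}^2
+ \sum_{i\in (K_1\cup K_{-1})^C}\max\{|g_i|-\tau,0\}^2.
\end{align}
Taking expectations and using the standard substitution $u=g_i$: the $K_1$ contribution gives $|K_1|\int_{-\infty}^\tau (u-\tau)^2\phi(u)\,du$; the $K_{-1}$ contribution, by the symmetry $g_i\stackrel{d}{=}-g_i$, reduces to the same integral $|K_{-1}|\int_{-\infty}^\tau (u-\tau)^2\phi(u)\,du$; and the complement contribution, again by symmetry of $\phi$, equals $2(N-k)\int_{\tau}^\infty (u-\tau)^2\phi(u)\,du$. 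Summing the $K_1$ and $K_{-1}$ terms gives the factor $k$ in front of the first integral.

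Finally, invoking $\delta(\mathcal{D}(f,x_{\pm1}))\le \inf_{\tau\ge 0}\mathbb{E}[\operatorname{dist}(\mathbf{g},\tau\,\partial f(x_{\pm1}))^2]$ yields the claimed bound. The main obstacle is the subdifferential computation at coordinates in $(K_1\cup K_{-1})^C$ where the signal is zero and the constraint is two-sided $|s_i|\le 1$; this requires testing with both $+e_j$ and $-e_j$, and carefully handling that the constraint set $[-1,1]^N$ (not just $\ell_1$-smallness) is what eliminates the infinite-valued term in the subgradient inequality. The rest is bookkeeping and routine Gaussian expectation calculations.
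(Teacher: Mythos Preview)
Your proposal is correct and follows essentially the same approach as the paper: the same reduction to $y\in[-1,1]^N$, the same term-by-term verification of $S\subseteq\partial f(x_{\pm1})$ over $K_1$, $K_{-1}$, and $(K_1\cup K_{-1})^C$, the same test vectors $x_{\pm1}\pm e_j$ for the reverse inclusion, and the same coordinatewise distance computation followed by the bound from \cite{ALMT}. One minor slip to fix when you write it out: for $i\in K_{-1}$ the distance from $g_i$ to $(-\infty,-\tau]$ is $\max\{g_i+\tau,0\}$, not $\max\{-\tau-g_i,0\}$; after applying $g_i\stackrel{d}{=}-g_i$ this still yields the integral $\int_{-\infty}^{\tau}(u-\tau)^2\phi(u)\,du$ that you (correctly) state.
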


\begin{proof}
For $f$ defined as in \eqref{eq:f_ter}, we obtain
\begin{align}
\partial f(x_{\pm1}) &=\{s\in \R^N: \|y\|_1\ge \|x_{\pm1}\|_1+\langle s, y-x_{\pm1}\rangle \text{ for all } y\in [-1,1]^N\}\\
&=\{s\in \R^N: |K_1|+|K_{-1}|\le \|y\|_1-\langle s,y\rangle +\sum_{i\in {K_1}}s_i-\sum_{i\in K_{-1}}s_i \}.
\end{align}
First, we show that the set on the right hand side of \eqref{eqn:SubDiffTer}, say $S$, satisfies $S\subseteq \partial f(x_{\pm1})$. If $s\in S$, then
\begin{eqnarray*}
\lefteqn{\|y\|_1-\langle s,y\rangle +\sum_{i\in {K_1}}s_i-\sum_{i\in K_{-1}}s_i}\\
&=&\sum_{i\in (K_1 \cup K_{-1})^C}(|y_i|-s_iy_i)+\sum_{i\in K_1}\left(|y_i|+s_i(1-y_i)\right)+\sum_{i\in K_{-1}}\left(|y_i|-s_i(1+y_i)\right)\\
&=: I +II +III.
\end{eqnarray*}
To estimate $I$ we will use the fact that for $i\in (K_1 \cup K_2)^C$ it holds  $|s_i|\le 1$ and therefore
\begin{align}
 I= |y_i|-s_iy_i\ge0.
\end{align}
By distinguishing the cases $y_i\le 0$ and $y_i\ge 0$, one can easily estimate that
\begin{align}
 II=|y_i|+s_i(1-y_i)\ge s_i,
\end{align}
since $s_i\ge1$ for $i\in K_1$. And finally, because $s_i\le -1$ for  $i\in K_{-1}$, one can similarly estimate
\begin{align}
 III=|y_i|+s_i(1-y_i)\ge -s_i.
\end{align}
Together this yields
\begin{align}
 I+II+II\ge \sum_{i\in K_1}s_i-\sum_{i\in K_{-1}}s_i
\ge |K_1|+|K_{-1}|,
\end{align}
and therefore $s \in \partial f(x_{\pm1})$.

To show the reverse inclusion $\partial f(x_{\pm1})\subseteq S$, consider the vectors $y^j$ given by
\begin{align}
y^j: =x_{\pm1}+e_j\in[-1,1]^N,  \text{ for } j\in [N] \backslash K_1 \quad \text{and}\quad y^j:=x_{\pm1}-e_j\in[-1,1]^N, \text{ for } j\in [N] \backslash K_{-1}.
\end{align}
This proves Equation \eqref{eqn:SubDiffTer}, which in turn provides
\begin{align}
\mathbb{E}(\dist{\mathbf{g}}{\tau\partial f(x_{\pm1})}^2)=k_{-1}\int_{-\tau}^{\infty}(u-\tau)^2\phi(u)du +k_{1}\int^{\tau}_{-\infty}(u-\tau)^2\phi(u)du+ 2(1-k)\int_{\tau}^{\infty}(u-\tau)^2\phi(u)du,
\end{align}
with $k_i=|K_i|$. Exploring the symmetry of the integral kernel $\phi$ and applying the infimum, concludes the proof.
\end{proof}

\subsection{Robustness}

As in the unipolar binary case, one key aspect is robustness. In the bipolar ternary situation, the according recovery algorithm takes the form
\begin{align}
\label{SBBPDenoising}
\min\|z\|_{1} \quad \text{subject to} \quad \|Az - b \|_{2} \le \eta \quad \text{and} \quad z\in [-1,1]^N \tag{P$_{\pm\text{ter}}^\eta$}.
\end{align}
We start by defining a suitable form of the null space property as follows:

\begin{definition}
A matrix $A \in \R^{m \times N}$ is said to satisfy the \emph{robust bipolar ternary null space property} with constants $0 < \rho < 1$ and $\tau > 0$
relative to two disjoint sets $K_{-1}, K_1 \subset [N]$, if
\noeqref{RSBNSP}
\begin{align}
\sum_{i \in K_{-1}} v_i- \sum_{i \in K_1} v_i\le  \rho \sum_{i \in K^C} |v_i| + \tau \| Av\|_2
\quad
\textnormal{for any } v \in H_{K_1,K_{-1}} \cap \ker(A).\label{RSBNSP}\tag{RBT-NSP}
\end{align}

\end{definition}

Armed with this property, we derive a sufficient condition on the measurement matrix for robust recovery via \eqref{SBBPDenoising}. Since the proof
follows the same strategy as the one of Theorem \ref{thm:BBPDS}, we outsource it to Appendix \ref{sec:app1}.

\begin{theorem}
\label{thm:SBBPD1}
Let $A \in \mathbb{R}^{m \times N}$ satisfy the RBT-NSP with constants $0 < \rho < 1$ and $\tau > 0$ relative to the disjoint sets
$K_{-1},K_1 \subset [N]$. Further, assume the the measurements satisfy $b = A (x_\pm) + e$, where $\|e\|_{2} \le \eta$. Then a
solution $\hat{z}$ of \eqref{SBBPDenoising} approximates $x_{\pm1}$ with $\ell_1$-error
\begin{align}
\| \hat{z} -x_\pm \|_1 \leq \frac{4 \tau }{1 - \rho} \eta.
\end{align}
\end{theorem}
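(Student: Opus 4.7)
The plan is to mirror the proof of Theorem \ref{thm:BBPDS} step by step, with the main technical adjustment being the bookkeeping across the two index sets $K_1$ and $K_{-1}$. First, I would establish an analogue of Lemma \ref{thm_robust_NSP_z_between_0_alpha} in the bipolar ternary setting. Given any $z\in[-1,1]^N$, set $v := z - x_{\pm 1}$. Then $v_i = z_i - 1 \le 0$ for $i\in K_1$ and $v_i = z_i + 1 \ge 0$ for $i \in K_{-1}$, so $v\in H_{K_1,K_{-1}}$ automatically; moreover $v_{K^C} = z_{K^C}$. Because of the sign structure,
\begin{align}
\sum_{i \in K_{-1}} v_i - \sum_{i \in K_1} v_i \;=\; \sum_{i\in K_{-1}}|v_i| + \sum_{i\in K_1}|v_i| \;=\; \|v_K\|_1 \;=\; \|(z-x_{\pm 1})_K\|_1.
\end{align}
Applying RBT-NSP to $v$ (which does not require $v\in\ker A$ once the $\tau\|Av\|_2$ term is included) yields
\begin{align}
\|(z-x_{\pm 1})_K\|_1 \;\le\; \rho\,\|z_{K^C}\|_1 + \tau\,\|A(z-x_{\pm 1})\|_2.
\end{align}

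Next I would control $\|x_{\pm 1}\|_1 - \|z\|_1$. Since $|z_i|\ge z_i$ for $i\in K_1$ and $|z_i|\ge -z_i$ for $i\in K_{-1}$, I have $\|z\|_1 \ge \sum_{i\in K_1} z_i - \sum_{i\in K_{-1}} z_i + \|z_{K^C}\|_1$. Writing $\|x_{\pm 1}\|_1 = k = \sum_{i\in K_1}(1-z_i) + \sum_{i\in K_{-1}}(1+z_i) + \sum_{i\in K_1} z_i - \sum_{i\in K_{-1}} z_i$ and subtracting gives $\|x_{\pm 1}\|_1 - \|z\|_1 \le \|(z-x_{\pm 1})_K\|_1 - \|z_{K^C}\|_1$. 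Combining this with the inequality from Step 1 and solving for $\|z_{K^C}\|_1$ produces
\begin{align}
\|z_{K^C}\|_1 \;\le\; \frac{1}{1-\rho}\bigl(\|z\|_1 - \|x_{\pm 1}\|_1 + \tau\,\|A(z-x_{\pm 1})\|_2\bigr).
\end{align}

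The final assembly is identical in spirit to the binary case: decompose $\|z-x_{\pm 1}\|_1 = \|(z-x_{\pm 1})_K\|_1 + \|z_{K^C}\|_1$, substitute the two bounds above, and simplify using $\tfrac{1+\rho}{1-\rho} + 1 = \tfrac{2}{1-\rho}$ to get
\begin{align}
\|z-x_{\pm 1}\|_1 \;\le\; \frac{1+\rho}{1-\rho}\bigl(\|z\|_1 - \|x_{\pm 1}\|_1\bigr) + \frac{2\tau}{1-\rho}\,\|A(z-x_{\pm 1})\|_2.
\end{align}
Setting $z=\hat z$ makes the first bracket non-positive by optimality ($\|\hat z\|_1 \le \|x_{\pm 1}\|_1$), and the triangle inequality $\|A(\hat z - x_{\pm 1})\|_2 \le \|A\hat z - b\|_2 + \|b - Ax_{\pm 1}\|_2 \le 2\eta$ closes the argument with constant $4\tau/(1-\rho)$.

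The only genuinely new bit compared to the binary proof is the sign bookkeeping in Step 1: one must verify that the combination $\sum_{K_{-1}} v_i - \sum_{K_1} v_i$ appearing in RBT-NSP coincides with $\|v_K\|_1$ and, dually, that $\|x_{\pm 1}\|_1 - \|z\|_1$ can be bounded by $\|(z-x_{\pm 1})_K\|_1 - \|z_{K^C}\|_1$ when $z$ ranges over $[-1,1]^N$ rather than $[0,1]^N$. Both follow from the observation that the sign of $(x_{\pm 1})_i$ on $K$ exactly matches the direction in which $z_i$ is constrained to lie relative to it. Once this is in place, the rest is purely mechanical and inherits the constants from the binary argument.
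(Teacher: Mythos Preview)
Your proposal is correct and follows essentially the same approach as the paper's proof: the paper also first derives $\|(z-x_{\pm 1})_K\|_1 \le \rho\,\|z_{K^C}\|_1 + \tau\,\|A(z-x_{\pm 1})\|_2$ (via an auxiliary lemma rephrasing RBT-NSP for arbitrary $z\in[-1,1]^N$, which you simply inline), then bounds $\|z_{K^C}\|_1$ exactly as you do, combines the two, and specializes to the minimizer $\hat z$. Your parenthetical remark that RBT-NSP should apply to all $v\in H_{K_1,K_{-1}}$ rather than only $v\in H_{K_1,K_{-1}}\cap\ker(A)$ is well taken---the restriction to $\ker(A)$ in the stated definition is evidently a typo (otherwise the $\tau\|Av\|_2$ term would vanish), and the paper's own lemma and proof use the unrestricted version.
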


To conclude our analysis for bipolar ternary signals, we state the number of necessary measurements to ensure a robust approximation
of bipolar ternary signals. Since this proof is almost the same as the one of Theorem \ref{thm:PhaseTransNoise}, we omit it.

\begin{theorem}\label{thm:SBBPD2}
Let $x_\pm=\mathds{1}_{K_1}-\mathds{1}_{K_{-1}}$, and let $A\in \R^{m \times N}$ be randomly chosen from the Gaussian distribution.
Assume that noisy measurements $y=Ax_\pm+e$ are taken with $\|e\|\le \eta$. If, for $\varepsilon\in (0,1)$ and $\tau >0$,
 \begin{align}\label{mainthm3.1}
  \frac{m^2}{m+1}\ge \left(\sqrt{\ln(\varepsilon^{-1})}+\sqrt{\Delta_{\pm\text{ter}}(k)}+\tau\right)^2,
 \end{align}
then with probability at least $1-\varepsilon$, every minimizer $\hat{x}$ of \eqref{SBBPDenoising} satisfies
\begin{align}
 \|x-\hat{x}\|_2\le \frac{2\eta}{\tau}.
\end{align}
In particular, if
 \begin{align}
  \frac{m^2}{m+1}> \left(\sqrt{ln(\varepsilon^{-1})}+\sqrt{\partial_\pm(k)}+4\eta\right)^2,
 \end{align}
 then $x$ is the unique solution of \eqref{PbinDR}.
  \end{theorem}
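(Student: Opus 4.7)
The plan is to mirror the argument of Theorem \ref{thm:PhaseTransNoise} verbatim, replacing the unipolar binary descent cone and statistical dimension bound with their bipolar ternary counterparts from Section \ref{sec:PhaseTransPM}. First, let $\mathcal{D}_\pm := \mathcal{D}(f, x_{\pm 1})$ denote the descent cone of the convex function $f$ defined in \eqref{eq:f_ter} at $x_{\pm 1} = \mathds{1}_{K_1} - \mathds{1}_{K_{-1}}$; the same kind of computation as at the end of Section \ref{sec:PhaseTransBin} shows that $\mathcal{D}_\pm = N_K \cap H_{K_1,K_{-1}}$. By \cite[Thm.~4.36]{FR}, applied to the convex program \eqref{SBBPDenoising}, it suffices to establish that
\begin{align}
\inf_{v \in \mathcal{D}_\pm \cap \mathbb{S}^{N-1}} \|Av\|_2 \,\ge\, \tau
\end{align}
holds with probability at least $1-\varepsilon$; on this event every minimizer $\hat{x}$ of \eqref{SBBPDenoising} will automatically satisfy $\|x_{\pm 1} - \hat{x}\|_2 \le 2\eta/\tau$.

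Next, I would invoke Gordon's escape-through-a-mesh theorem in the form \cite[Thm.~9.21]{FR}:
\begin{align}
\mathbb{P}\Bigl(\inf_{v \in T}\|Av\|_2 \le E_m - \ell(T) - t\Bigr) \,\le\, e^{-t^2/2},
\end{align}
applied to $T = \mathcal{D}_\pm \cap \mathbb{S}^{N-1}$, where $E_m \ge m/\sqrt{m+1}$ and $\ell(T) = \mathbb{E}\sup_{v \in T}\langle \mathbf{g}, v\rangle$ is the Gaussian width. Combining the inequality $\ell(T) \le \sqrt{\delta(T)}$ from \cite[Rem.~3.5]{OT} with the identity $\delta(\mathcal{D}_\pm \cap \mathbb{S}^{N-1}) = \delta(\mathcal{D}_\pm)$ (which holds because $\mathcal{D}_\pm$ is a convex cone, just as in \eqref{eq:bin}) and the bound $\delta(\mathcal{D}_\pm) \le \Delta_{\pm\text{ter}}(k)$ from Theorem \ref{mainthm2}, one obtains
\begin{align}
E_m - \ell(T) - t \,\ge\, \frac{m}{\sqrt{m+1}} - \sqrt{\Delta_{\pm\text{ter}}(k)} - t.
\end{align}
Choosing $t = \sqrt{\ln(\varepsilon^{-1})}$, the hypothesis \eqref{mainthm3.1} rearranges precisely to $E_m - \ell(T) - t \ge \tau$, yielding the required tail bound and hence the first conclusion $\|x_{\pm 1} - \hat{x}\|_2 \le 2\eta/\tau$.

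For the second conclusion, I would specialize $\tau = 4\eta$: the noisy recovery bound then gives $\|x_{\pm 1} - \hat{x}\|_2 < 1/2$ under the strict version of the hypothesis, so $\|x_{\pm 1} - \hat{x}\|_\infty < 1/2$ coordinatewise. Since $x_{\pm 1}$ is integer-valued, coordinatewise rounding of $\hat{x}$ returns $x_{\pm 1}$ exactly, establishing uniqueness of $x_{\pm 1}$ as the solution of the rounded ternary program (the reference to \eqref{PbinDR} in the statement being the ternary analogue, defined by rounding a minimizer of \eqref{SBBPDenoising} onto $\{-1,0,1\}^N$). I do not expect any real obstacle here: every ingredient — the descent cone, the statistical dimension bound $\Delta_{\pm\text{ter}}(k)$, Gordon's inequality, and the rounding step — has been prepared in the preceding sections, so the argument is essentially a bookkeeping exercise that replaces the subscript $\text{bin}$ with $\pm\text{ter}$ throughout the proof of Theorem \ref{thm:PhaseTransNoise}.
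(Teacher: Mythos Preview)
Your proposal is correct and follows exactly the route the paper intends: the paper explicitly omits the proof because it is ``almost the same as the one of Theorem~\ref{thm:PhaseTransNoise}'', and your write-up carries out precisely that substitution, replacing $\mathcal{D}_K$ and $\Delta_{\text{bin}}(k)$ by $\mathcal{D}_\pm$ and $\Delta_{\pm\text{ter}}(k)$ while invoking the same three ingredients (\cite[Thm.~4.36]{FR}, \cite[Thm.~9.21]{FR}, and \cite[Rem.~3.5]{OT}). Your observation that the reference to \eqref{PbinDR} in the second conclusion should really be the ternary rounded program is also on point.
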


\section{Generalizations to Larger Alphabets}\label{sec:larger}

We have seen that for unipolar binary signals (resp.~bipolar ternary signals), the performance of basis pursuit can be 
enhanced by imposing the additional constraint $x \in [0,1]^N$ (resp.~$x \in [-1,1]^N$). This raises the question of 
whether the improvements can also be carried over to signals with entries in larger finite alphabets such as, for
example, $\mathcal{A}=\{0,1,2\}$ or $\mathcal{A}=\{0,\pm1,\pm2\}$. The answer is yes and the results interestingly
depend only on the number of zero entries and the number of entries having largest amplitude. 

As indicated by the previous 
results, we need to to discuss unipolar finite-valued signals  ($\mathcal{A}=\{0,\dots, L\}$, $L\in \mathbb{Z}$) and 
bipolar finite-valued signals ($\mathcal{A}=\{-L_1,\dots, 0,\dots, L_2\}$, $L_1,L_2\in \mathbb{N}$), separately. 
Let us start with unipolar finite alphabets and note that in this case $L$ can be indeed also chosen to be negative.

\subsection{Unipolar Finite Alphabets}

As a straightforward adaptation of basis pursuit to signals $x_0$ with entries in the alphabet $\mathcal{A}=\{0,\dots,L\}$, 
to which we refer as \emph{unipolar finite-valued signals}, we obtain the minimization given by
\begin{align}
  \min\|x\|_1 \text{ subject to } Ax=b \text{ and } x\in [0,L]^N  \label{PUF}\tag{$P_{\mathcal{UF}}$}.
\end{align}
The solvability of this problem can be characterized by the following variant of NSP.

\begin{definition}
Let $K_L\subset K\subset [N]$. A matrix $A\in \R^{m\times N}$ is said to satisfy the \emph{unipolar finite NSP} with respect $K_L$ and $K$, if
 \noeqref{TNSP}
 \begin{align}
  \ker(A)\cap N^+\cap H_{K_L,K^C} =\{0\},\label{TNSP}\tag{$\mathcal{UF}$-NSP}
 \end{align}
where $H_{K_L,K^C}=\{w\in \R^N: w_i\le 0 \text{ for }i \in K_L \text{ and } w_i\ge 0 \text{ for } i \in K^C \}$.
\end{definition}

This enables us to prove that the $\mathcal{UF}$-NSP is indeed necessary and sufficient to recover a unipolar finite-valued signal via \eqref{PUF}. In the following we will always denote the unipolar binary vector we aim to recover by
\begin{align}\label{NotU:1}
x_0=\sum_{i=1}^Li\mathds{1}_{K_i},
\end{align}
with $K_1,\dots, K_L$ being disjoint subsets. We will also sometimes used
\begin{align}\label{NotU:2}
 K=\bigcup_{i=1}^L K_i \quad \text{and} \quad \hat{K}=K\setminus K_L.
\end{align}

\begin{theorem}\label{thm:NSPUF}
Let $x_0$ be defined as in \eqref{NotU:1} and $K,\hat{K}$ as in \eqref{NotU:2}. Further let $A\in \R^{m\times N}$. 
Then the following conditions are equivalent:
\begin{enumerate}
\item[(i)] The measurement matrix $A$ satisfies $\mathcal{UF}$-NSP with respect to sets $K_L$ and $K$.
\item[(ii)] The vector $x_0$ is the unique solution of \eqref{PUF} with $b=Ax_0$.
\end{enumerate}
\end{theorem}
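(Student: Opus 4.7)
My plan is to mirror the two-direction argument used in Theorems~\ref{thm:BinNec} and \ref{thm:BTNSP}, after noting that the cone $N^+ \cap H_{K_L,K^C}$ is exactly the descent cone at $x_0$ of the convex function
\begin{align*}
f(x) = \begin{cases} \|x\|_1 & \text{if } x \in [0,L]^N, \\ \infty & \text{otherwise}, \end{cases}
\end{align*}
whose minimization under $Ax=b$ is precisely \eqref{PUF}. Indeed, for small $\tau>0$ the constraint $x_0+\tau y \in [0,L]^N$ forces $y_i \ge 0$ on $K^C$ (where $(x_0)_i=0$) and $y_i \le 0$ on $K_L$ (where $(x_0)_i=L$), while on $\hat K$ the entry $(x_0)_i \in \{1,\dots,L-1\}$ lies in the interior of $[0,L]$ so no sign condition arises. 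Since both $x_0$ and $x_0+\tau y$ are then nonnegative, $\|x_0+\tau y\|_1 \le \|x_0\|_1$ is equivalent to $\sum_i y_i \le 0$, i.e., $y \in N^+$. So the equivalence we seek is the general uniqueness criterion $\ker(A) \cap \mathcal{D}(f,x_0) = \{0\}$ from \cite{ALMT}, and it only remains to give the direct verification in the style of the earlier theorems.

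For (i) $\Rightarrow$ (ii), I would let $x$ be a solution of \eqref{PUF} with $b=Ax_0$ and set $u := x - x_0 \in \ker(A)$. The membership $x \in [0,L]^N$ combined with $(x_0)_i = 0$ for $i \in K^C$ and $(x_0)_i = L$ for $i \in K_L$ gives $u_i \ge 0$ on $K^C$ and $u_i \le 0$ on $K_L$, so $u \in H_{K_L, K^C}$. Nonnegativity of $x$ and $x_0$ then yields $\sum_i u_i = \|x\|_1 - \|x_0\|_1 \le 0$ by optimality of $x$, so $u \in N^+$. The $\mathcal{UF}$-NSP then forces $u=0$, so $x=x_0$.

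For (ii) $\Rightarrow$ (i), I argue contrapositively: suppose there is a nonzero $u \in \ker(A) \cap N^+ \cap H_{K_L,K^C}$. Using that this intersection is a cone, rescale $u$ by a positive factor so that $\|u\|_\infty \le 1$. Then $w := x_0 + u$ satisfies $w \in [0,L]^N$: on $K^C$, $w_i = u_i \in [0,1]$; on $K_L$, $w_i = L + u_i \in [L-1, L]$; on $\hat K$, $w_i \in [(x_0)_i - 1, (x_0)_i + 1] \subset [0,L]$ since $1 \le (x_0)_i \le L-1$. Moreover $Aw = Ax_0 = b$ and $\|w\|_1 = \|x_0\|_1 + \sum_i u_i \le \|x_0\|_1$, contradicting uniqueness of $x_0$ as a minimizer of \eqref{PUF}.

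The proof is essentially bookkeeping; the one technical step is the scaling argument in the second direction, which works precisely because $\hat K$ is defined as $K \setminus K_L$ so that $(x_0)_i \in \{1,\dots,L-1\}$ sits strictly inside $[0,L]$ on $\hat K$, leaving slack on every coordinate. When $L=1$ one has $\hat K = \emptyset$ and the statement collapses to Theorem~\ref{thm:BinNec}, which is the expected sanity check.
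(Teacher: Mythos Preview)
Your proof is correct and follows exactly the approach the paper indicates: it explicitly states that the proof of Theorem~\ref{thm:NSPUF} ``follows the same arguments as the proofs for Theorem~\ref{thm:BinNec} and~\ref{thm:BTNSP}, and is therefore omitted.'' Your (i)$\Rightarrow$(ii) direction is in fact slightly cleaner than the paper's template (you show $u\in\ker(A)\cap N^+\cap H_{K_L,K^C}$ directly and conclude $u=0$, rather than deriving a strict inequality), and your observation that the cone structure---not a subspace structure---is what justifies the rescaling in the converse direction is the right one.
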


The proof of Theorem \ref{thm:NSPUF} follows the same arguments as the proofs for Theorem \ref{thm:BinNec} and \ref{thm:BTNSP}, and is therefore 
omitted. At a glance ,this result may look promising for the recovery of {unipolar finite-valued} signals. It states that recovery guarantees can 
be highly improved as long as the set $\bigcup_{i=1}^{L-1}K_i$ is not too large in comparison to $K$ (see also Figure \ref{fig:StatTer}). However, 
the following theorem clarifies that it does not entirely use the discrete structure of unipolar finite-valued signals.

\begin{theorem}\label{thm:terKsimple}
Let $x_0$ be defined as in \eqref{NotU:1} and $K,\hat{K}$ as in \eqref{NotU:2}. Further let $A\in \R^{m\times N}$.
Then the following conditions are equivalent:
\begin{enumerate}
\item[(i)] The vector $x_0$ is the unique solution of \eqref{PUF} with $b=Ax_0$.
\item[(ii)] Every $\hat{x} \in \R^N$ of the form $\hat{x}=\tilde{x}_{\hat{K}}+L\mathds{1}_{K_L}$ with $\tilde{x}\in (0,L)^N$ 
is the unique solution of \eqref{PUF} with $b=Ax$.
\end{enumerate}
\end{theorem}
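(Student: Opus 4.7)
The plan is to route through Theorem \ref{thm:NSPUF}: both (i) and (ii) will be shown equivalent to $A$ satisfying \eqref{TNSP} with respect to $K_L$ and $K$, hence to each other.

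The implication (ii) $\Rightarrow$ (i) is essentially free, because $x_0$ itself is of the prescribed form. Indeed, $x_0 = \sum_{i=1}^{L} i \mathds{1}_{K_i}$ has value $L$ on $K_L$ and values in $\{1,2,\dots,L-1\} \subset (0,L)$ on $\hat{K}$, so picking any $\tilde{x} \in (0,L)^N$ that agrees with $x_0$ on $\hat{K}$ (its values off $\hat{K}$ are irrelevant for the definition $\tilde{x}_{\hat{K}} + L \mathds{1}_{K_L}$) realizes $x_0$ as such an $\hat{x}$, and (ii) delivers the uniqueness.

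For (i) $\Rightarrow$ (ii), I would first invoke Theorem \ref{thm:NSPUF} to rephrase (i) as
\[
\ker(A) \cap N^+ \cap H_{K_L, K^C} = \{0\}.
\]
Fix an arbitrary $\hat{x} = \tilde{x}_{\hat{K}} + L \mathds{1}_{K_L}$ with $\tilde{x} \in (0,L)^N$, let $x \in [0,L]^N$ be any minimizer of \eqref{PUF} with $Ax = A\hat{x}$, and set $u := x - \hat{x} \in \ker(A)$. The sign pattern of $u$ follows from $x \in [0,L]^N$ together with $\hat{x}_i = L$ on $K_L$, $\hat{x}_i = 0$ on $K^C$, and $\hat{x}_i \in (0,L)$ on $\hat{K}$: respectively $u_i \le 0$ on $K_L$, $u_i \ge 0$ on $K^C$, and no constraint on $\hat{K}$, so $u \in H_{K_L, K^C}$. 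Since $x$ and $\hat{x}$ both lie in $\R_+^N$,
\[
\|x\|_1 - \|\hat{x}\|_1 = \sum_{i=1}^{N} u_i,
\]
and minimality $\|x\|_1 \le \|\hat{x}\|_1$ yields $u \in N^+$. The \eqref{TNSP} then forces $u = 0$, so $x = \hat{x}$, establishing the uniqueness for every admissible $\hat{x}$.

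The only point requiring care is the strict interiority $\tilde{x}|_{\hat{K}} \in (0,L)^{\hat{K}}$: this is precisely what prevents any additional one-sided constraint on the $\hat{K}$-coordinates of $u$ and makes the recovery condition at $\hat{x}$ match $\mathcal{UF}$-NSP exactly. Were an entry on $\hat{K}$ to touch $0$ or $L$, an extra sign constraint would shrink the relevant cone and the equivalence could break. Beyond this bookkeeping, no substantial obstacle is expected; the argument simply mirrors the binary and ternary proofs (Theorems \ref{thm:BinNec} and \ref{thm:BTNSP}).
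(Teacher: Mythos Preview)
Your proof is correct and follows essentially the same route as the paper: both directions go through Theorem~\ref{thm:NSPUF}, with (ii)$\Rightarrow$(i) immediate since $x_0$ is a particular instance of the admissible $\hat{x}$, and (i)$\Rightarrow$(ii) using the $\mathcal{UF}$-NSP to force any competing feasible point to have strictly larger $\ell_1$-norm. The only cosmetic difference is that the paper argues directly with an arbitrary feasible $z\neq\hat{x}$ (showing $z-\hat{x}\notin N^+$, hence $\|z\|_1>\|\hat{x}\|_1$), whereas you phrase it via a minimizer $x$ and conclude $x-\hat{x}=0$ from the NSP intersection; these are logically interchangeable.
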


\begin{proof}
  It is clear that (i) follows from (ii), as $x_0$ particularly is of the form stated in ii). For the other direction let $x_0$ be the unique solution of \eqref{PUF} and let $\hat{x}=\tilde{x}_{\hat{K}}+L\mathds{1}_{K_L}$ with 
 $\tilde{x}\in (0,L)^N$. Respecting Theorem \ref{thm:NSPUF} the measurement matrix $A$ needs to fulfill the $\mathcal{UF}$-NSP condition.
 Let now $z\in \R^N$ feasible for $\eqref{PUF}$ with $b=Ax$, then $z-\hat{x}\in \ker{A}\cap H_{K_L,K^C}$ and hence, $z-\hat{x}\notin N^+$. This shows
 that $\|\hat{x}\|_1< \|z\|_1$ and therefore that $x$ is the unique solution.
\end{proof}

The last theorem states that recovery guarantees only depend on the entries of the signal which are equal to zero or having largest amplitude.

It remains to compute the number of necessary measurements for \eqref{PUF} to succeed in the case that $A\in \R^{m \times N}$ is a Gaussian matrix. With the same ideas as used in Subsections \ref{sec:PhaseTransBin} and \ref{sec:PhaseTransPM}, we will compute the statistical dimension of the cone $N^+\cap H_{K_L,K^C}$ for a unipolar finite valued signal $x_0=\sum_{i=1}^Li\mathds{1}_{K_i}$, with $K_1,\dots, K_L$ disjoint subsets in $[N]$. As indicated by Theorem \ref{thm:terKsimple} the descent cone does not depend on the fact that for $i\in [L-1]$ the entries of $x$ on $K_i$ are equal to $i$.

The phase transition depends highly on size of $K_L$ relative to $K=\bigcup_{i=1}^LK_i$. In the worst case, namely that $K_L=\emptyset$, the phase transition 
coincides with the one of \eqref{P+}; in the best case, namely that $\hat{K}=\emptyset$, it is as good as for \eqref{Pbin}.
For an illustration we refer to Figure \ref{fig:StatTer}.

\begin{figure}
\begin{center}
\includegraphics[scale=0.5]{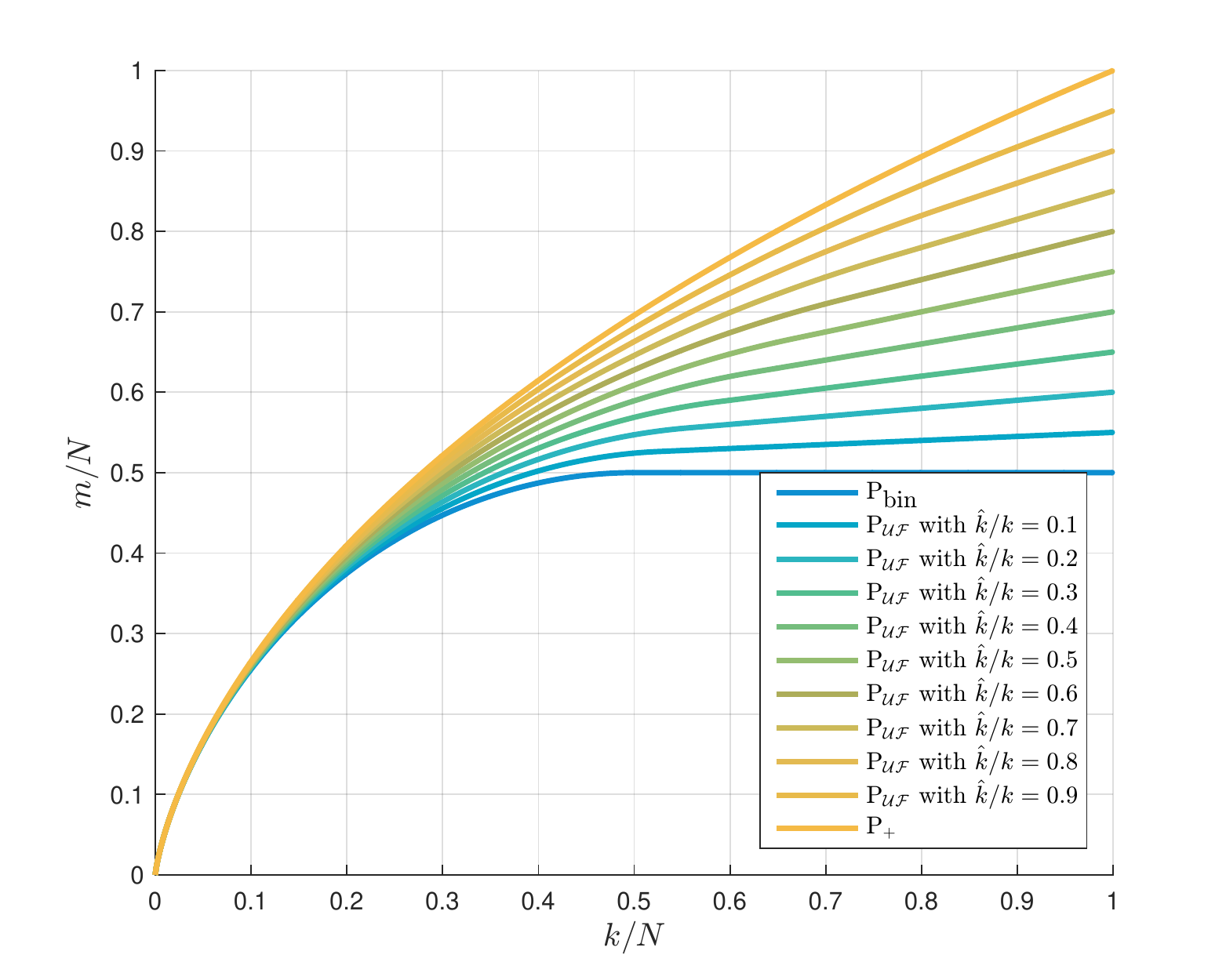}\includegraphics[scale=0.5]{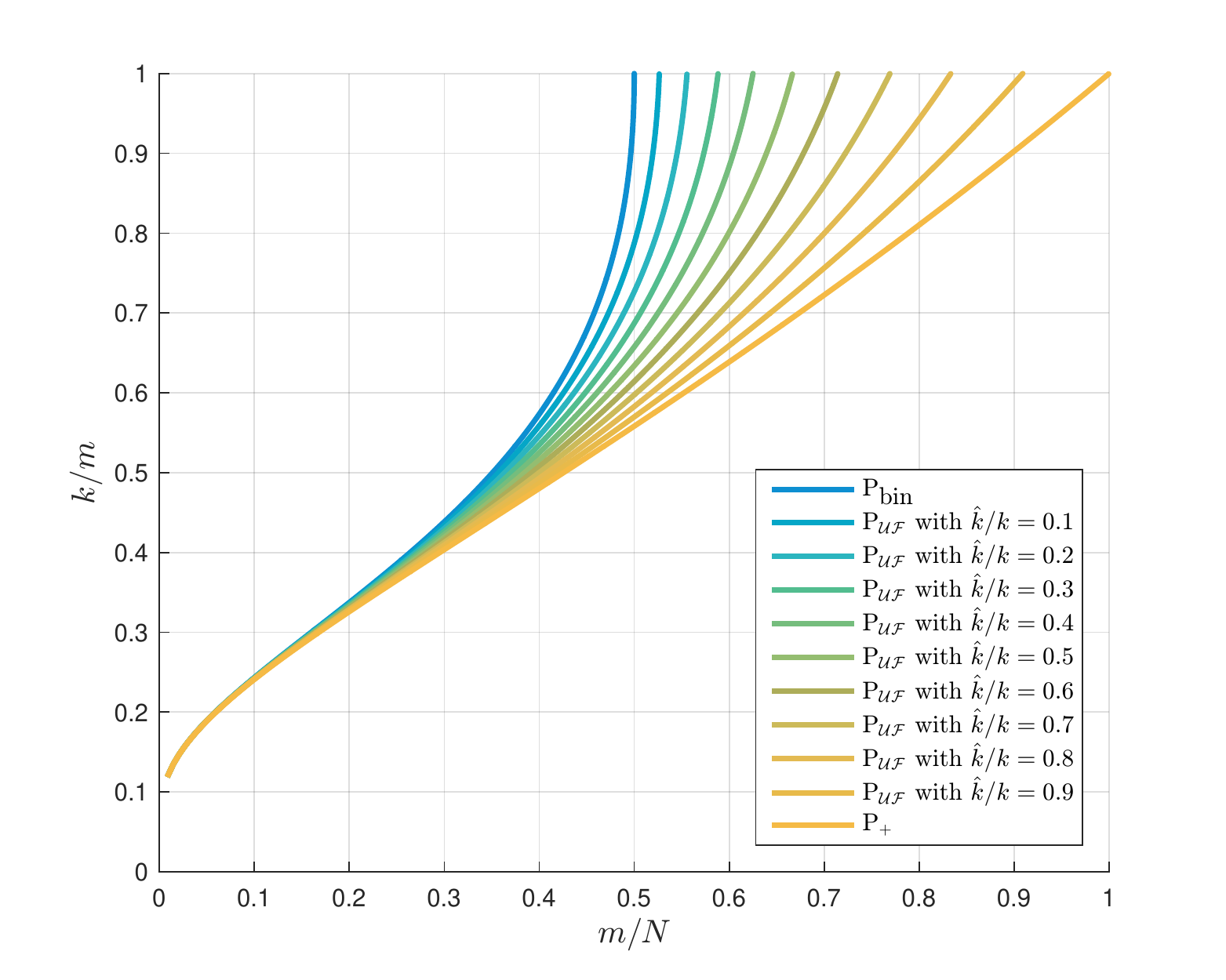}
\put(-343,-15){(a)}
\put(-115,-15){(b)}
\caption{Phase transition of the convex program \protect\eqref{PUF} according to the ratio of $\hat{k}$ to $k$, where $k$ is the size of the entire support of a 
unipolar finite-valued signal and $\hat{k}$ the number of entries in the signal not equal to zero or to the largest value of the given alphabet. For the convenience of the reader,
the following two illustrations are provided: Successful recovery related to the area above the curves in (a), and below the curves in (b).}
\label{fig:StatTer}
\end{center}
\end{figure}

\begin{theorem}\label{mainthm3}
Fix a tolerance $\varepsilon >0$. Let $x_0$ be defined as in \eqref{NotU:1} and $K,\hat{K}$ as in \eqref{NotU:2}. Further let $A\in \R^{m \times N}$ be Gaussian and $b=Ax_0$. 
With $\hat{k}=|\hat{K}|$, and $k=|K|$, provided
\begin{align}
 m\ge \inf_{\tau\ge0}\left\{\hat{k}(1+\tau^2)+k_L\int_{-\infty}^{\tau}(u-\tau)^2\phi(u)du + (N-k)\int_{\tau}^{\infty}(u-\tau)^2\phi(u)du,\right\}+\sqrt{8\log(4/\varepsilon)N},
\end{align}
the program \eqref{PUF} will succeed to recover $x$ uniquely with probability larger $1-\varepsilon$.
\end{theorem}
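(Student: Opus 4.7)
The plan is to adapt the statistical-dimension strategy used in the proofs of Theorems \ref{mainthm1} and \ref{mainthm2}. The first step is to absorb the box constraint into the objective, defining $f(x)=\|x\|_1$ for $x\in [0,L]^N$ and $f(x)=+\infty$ otherwise, so that \eqref{PUF} becomes $\min f(x)$ subject to $Ax=b$. A direct verification along the lines of Section \ref{sec:PhaseTransBin} shows that the descent cone $\mathcal{D}(f,x_0)$ coincides with $N^+\cap H_{K_L,K^c}$, the cone appearing in $\mathcal{UF}$-NSP; this aligns the convex-geometry recovery condition with the characterization of Theorem \ref{thm:NSPUF} and justifies applying the Gaussian phase-transition machinery of \cite{ALMT} to $f$.

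The second step is to compute the subdifferential $\partial f(x_0)$ following Lemma \ref{lem:subdiff}. Rewriting the defining inequality $\|y\|_1\geq\|x_0\|_1+\langle s,y-x_0\rangle$, valid for all $y\in[0,L]^N$, as $\sum_i(1-s_i)(y_i-x_0(i))\geq 0$, the sign of the admissible differences $y_i-x_0(i)$ constrains $s_i$: nonpositive on $K_L$ forces $s_i\geq 1$, nonnegative on $K^c$ forces $s_i\leq 1$, and arbitrary sign on $\hat{K}$ (because $x_0(i)$ is strictly interior to $[0,L]$ there) forces the equality $s_i=1$. Suitable test vectors $x_0\pm e_j$ supply the converse inclusions and yield
\[
\partial f(x_0)=\{s\in\R^N : s_i\geq 1 \text{ for } i\in K_L,\; s_i=1 \text{ for } i\in\hat{K},\; s_i\leq 1 \text{ for } i\in K^c\}.
\]

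With this in hand I would compute the Euclidean projection onto $\tau\partial f(x_0)$ coordinatewise. The squared distance splits into three independent contributions: $\max(\tau-g_i,0)^2$ on $K_L$, $(g_i-\tau)^2$ on $\hat{K}$, and $\max(g_i-\tau,0)^2$ on $K^c$. Taking expectations and using the identity $\mathbb{E}[(g-\tau)^2]=1+\tau^2$ for the singleton-subdifferential contributions on $\hat{K}$ gives
\[
J(\tau)=k_L\int_{-\infty}^{\tau}(u-\tau)^2\phi(u)\,du+\hat{k}(1+\tau^2)+(N-k)\int_{\tau}^{\infty}(u-\tau)^2\phi(u)\,du.
\]
An appeal to \cite[Thm.~II]{ALMT} with the bound $\delta(\mathcal{D}(f,x_0))\leq\inf_{\tau\geq 0}J(\tau)$ and the concentration term $\sqrt{8\log(4/\varepsilon)N}$ then delivers the announced condition on $m$.

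Structurally the argument follows the pattern established for the unipolar binary and bipolar ternary cases, so I do not expect a serious obstacle. The only genuinely new point is the presence of the interior support $\hat{K}$, on which the subdifferential collapses to the single value $1$; this is exactly the ingredient that produces the $\hat{k}(1+\tau^2)$ term and explains why the phase transition interpolates between those of \eqref{P+} (when $K_L=\emptyset$, so $\hat{k}=k$) and \eqref{Pbin} (when $\hat{K}=\emptyset$, so $k_L=k$), as anticipated in the paragraph preceding the theorem.
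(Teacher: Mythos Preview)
Your proposal is correct and follows essentially the same route as the paper: absorb the box constraint into $f$, compute $\partial f(x_0)$ (this is precisely Lemma~\ref{lem:subUF}, proved in Appendix~\ref{subsecAStat1} with the same test-vector argument you sketch), read off $J(\tau)$ coordinatewise, and invoke \cite[Thm.~II]{ALMT}. Your identification of the new ingredient---that the interior support $\hat{K}$ forces $s_i=1$ and hence contributes the full variance term $\hat{k}(1+\tau^2)$---is exactly the point the paper highlights.
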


Note that Theorem \ref{mainthm3} is true for signals of the form $\hat{x}=\tilde{x}_{\hat{K}}+L\mathds{1}_{K_L}$, where $\tilde{x}\in (0,L)^N$. As for Theorem \ref{mainthm1}, the proof is straightforward once we have identified the subdifferential 
of $f$. In the following lemma will describe the subdifferential for the more general signal $\hat{x}$, to find that all $\hat{x}$, including $x_0$, have the same subdifferential. However, due to the similarity to the proof of Lemma \ref{lem:subdiff}, we will postpone the proof to the Appendix \ref{subsecAStat1}.

\begin{lemma}\label{lem:subUF}
Let $\hat{x}=\tilde{x}_{\hat{K}} + L\mathds{1}_{K_L}$ for some $\tilde{x}\in (0,L)^N$ and $K_L,\hat{K}$ be 
two disjoint subsets of $[N]$. The descent cone of
\begin{align}
f(x)=\begin{cases}\label{fUF}
        \|x\|_1 & \text{ if } x\in [0,L]^N,\\
        \infty & \text{ otherwise}
       \end{cases}
\end{align}
is given by $\mathcal{D}(f,\hat{x})=N_{\hat{K}\cup K_L} \cap H_{K_L,K^C}$, where $K=K_L\cup\hat{K}$, and the corresponding subdifferential is
\begin{align}\label{eq:subUF}
\partial f(\hat{x})=\{s\in \R^N: s_i \ge 1 \text{ for } i\in K_L, s_i=1 \text{ for } i\in \hat{K}, s_i\le 1 \text{ for } i\in K^C \}.
\end{align}
In particular,
\begin{align}J_{\hat{k},k_L}(\tau)=& \mathbb{E}[\dist{\mathbf{g}}{\tau \partial f(\hat{x})}^2 ] \\
=&\hat{k}(1+\tau^2)+k_L\int_{-\infty}^{\tau}(u-\tau)^2\phi(u)du + (N-k)\int_{\tau}^{\infty}(u-\tau)^2\phi(u)du,
\end{align}
where $\hat{k}=|\hat{K}|$, $k_L=|K_L|$ and $k=|K|$.
\end{lemma}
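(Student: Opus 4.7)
The plan is to exploit that $f$ separates coordinate-wise. Writing $f(x) = \sum_{i=1}^N f_i(x_i)$ with $f_i(t) = t$ on $[0,L]$ and $+\infty$ elsewhere, one gets $\partial f(\hat{x}) = \partial f_1(\hat{x}_1) \times \cdots \times \partial f_N(\hat{x}_N)$ and, by the additive splitting of squared Euclidean distance over product sets, $\dist{\mathbf{g}}{\tau \partial f(\hat{x})}^2 = \sum_{i=1}^N \dist{g_i}{\tau \partial f_i(\hat{x}_i)}^2$. This reduces every assertion to a one-dimensional case analysis depending only on whether $\hat{x}_i$ is $0$, equals $L$, or lies strictly between.

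First I would compute each $\partial f_i(\hat{x}_i)$ directly from the defining inequality $f_i(t) \ge f_i(\hat{x}_i) + s(t - \hat{x}_i)$ for all $t \in [0,L]$. For $i \in K^C$, where $\hat{x}_i = 0$, the inequality is $t \ge s\,t$ on $[0,L]$, equivalent to $s \le 1$. For $i \in K_L$, where $\hat{x}_i = L$, the inequality becomes $t - L \ge s(t-L)$ with $t-L \le 0$, forcing $s \ge 1$. For $i \in \hat{K}$, where $\hat{x}_i = \tilde{x}_i \in (0,L)$, perturbations $t - \hat{x}_i$ of both signs are admissible in a neighborhood, so $s$ is pinned to the classical derivative $f_i'(\tilde{x}_i) = 1$. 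Assembling coordinate-wise gives \eqref{eq:subUF}.

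For the descent cone, I would argue directly: $y \in \mathcal{D}(f,\hat{x})$ iff some $\tau > 0$ satisfies $\hat{x} + \tau y \in [0,L]^N$ and $\|\hat{x}+\tau y\|_1 \le \|\hat{x}\|_1$. The box constraint forces $y_i \le 0$ on $K_L$ and $y_i \ge 0$ on $K^C$ (so $y \in H_{K_L,K^C}$), while coordinates in $\hat{K}$ are unconstrained because $\tilde{x}_i$ is interior to $[0,L]$. Under these sign conditions $\hat{x}+\tau y \ge 0$, and the norm condition reduces to the linear inequality $\sum_i y_i \le 0$, which after splitting the sum into $K_L$, $\hat{K}$, $K^C$ parts rearranges into the form stated in the lemma.

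Finally, I would evaluate $J_{\hat{k},k_L}(\tau)$ by summing coordinate expectations. For $i \in K^C$, $\tau\,\partial f_i(\hat{x}_i) = (-\infty, \tau]$, so $\dist{g_i}{\tau S_i}^2 = \max\{g_i - \tau, 0\}^2$ with expectation $\int_\tau^\infty (u-\tau)^2 \phi(u)\,du$. For $i \in K_L$, $\tau S_i = [\tau,\infty)$, giving $\max\{\tau - g_i, 0\}^2$ with expectation $\int_{-\infty}^\tau (u-\tau)^2 \phi(u)\,du$. For $i \in \hat{K}$, $\tau S_i = \{\tau\}$, giving $(g_i - \tau)^2$ with expectation $1 + \tau^2$. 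Summing across coordinates yields the closed-form expression for $J_{\hat{k},k_L}(\tau)$. The main subtlety is the $\hat{K}$ case: identifying that the two-sided subgradient inequality pins $s_i = 1$ exactly is what produces the dominant $\hat{k}(1+\tau^2)$ contribution, which is precisely what distinguishes the phase transition of \eqref{PUF} from the purely unipolar binary one in Theorem \ref{mainthm1}.
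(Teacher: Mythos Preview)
Your separability argument is correct and genuinely different from the paper's proof. The paper works directly in $\R^N$: it verifies $S \subseteq \partial f(\hat x)$ by an explicit chain of inequalities on $\sum_i (1-s_i)y_i + \sum_{i\in K}(s_i-1)\hat x_i$, and proves $\partial f(\hat x)\subseteq S$ by plugging in test vectors $y^j=\hat x-\hat x_j e_j$, $y^j=\hat x+e_j$, and $z^j=\hat x+(L-\hat x_j)e_j$. Your route---factoring $f=\sum_i f_i$ and reducing to the three one-dimensional cases $\hat x_i\in\{0,L\}\cup(0,L)$---is shorter and more conceptual; it also handles the $J_{\hat k,k_L}(\tau)$ formula in the same stroke, whereas the paper declares that part ``straightforward'' and omits it. What the paper's approach buys is that it never invokes the product rule $\partial(\sum_i f_i)=\prod_i\partial f_i$, so a reader who does not know that fact can still follow it.

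There is one gap in your descent-cone paragraph. Your derivation correctly yields $y\in H_{K_L,K^C}$ together with $\sum_i y_i\le 0$, i.e.\ $y\in N^+$. But you then claim this ``rearranges into the form stated in the lemma,'' which is $N_{\hat K\cup K_L}=\{w:\|w_K\|_1\ge\|w_{K^C}\|_1\}$. That rearrangement fails: coordinates in $\hat K$ carry no sign constraint in $H_{K_L,K^C}$, so $\sum_{i\in\hat K} y_i$ cannot be converted into $\sum_{i\in\hat K}|y_i|$. (Concretely, with $K_L=\{1\}$, $\hat K=\{2\}$, $K^C=\{3\}$ and $y=(-1,10,5)$, one has $y\in N_K\cap H_{K_L,K^C}$ but $\sum y_i=14>0$.) What you have actually proved is $\mathcal D(f,\hat x)=N^+\cap H_{K_L,K^C}$, which is precisely the cone appearing in the $\mathcal{UF}$-NSP definition and in Theorem~\ref{thm:NSPUF}; the $N_{\hat K\cup K_L}$ in the lemma statement appears to be a typo. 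Either flag the discrepancy or silently write $N^+$---do not claim the two coincide.
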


\subsection{Bipolar Finite Alphabets}

Finally we will analyze recovery guarantees for \emph{bipolar finite-valued} signals having entries in an alphabet of the 
form $\mathcal{A}=\{-L_1,\dots, L_2\}$, with $L_1$ and $L_2$ being positive integers. As before we will first introduce 
an NSP condition of the measurement matrix $A\in \R^{m\times N}$ which is necessary and sufficient for basis pursuit with box 
constraints to successfully recover bipolar finite-valued signals $x_0$ from the measurements $Ax_0$. We will then compute 
the phase transition for Gaussian matrices, i.e., the sufficient number of measurements such that basis pursuit with box 
constraints will succeed with high probability. In the following we will always denote the bipolar finite-valued signal we 
aim to recover by
\begin{align}\label{Not:1}
 x_0=\sum_{i=-L_1}^{L_2}i\mathds{1}_{K_i},
\end{align}
with $K_{-L_1},\dots, K_{L_2}\subset [N]$ being disjoint and $\bigcup_{i=-L_1}^{L_2}K_i=[N]$. We will also sometimes use
\begin{align}\label{Not:2}
 K=\bigcup_{i\in \{-L_1,\dots,L_2\}\setminus \{0\}}K_i \quad \text{ and } \quad \hat{K}=K\setminus(K_{-L_1}\cup K_{L_2})
\end{align}
and, hence, $K_0=K^C$.

The straightforward adaptation of basis pursuit to bipolar finite-valued signals is the program given by
\begin{align}
  \min\|x\|_1 \text{ subject to } Ax=b \text{ and } x\in [-L_1,L_2]^N  \label{PBFin}\tag{$P_{\mathcal{F}}$}.
\end{align}

The following variant of the NSP characterizes the solvability of this program. 

\begin{definition}
 Let $K_{-L_1},K_{L_2}\subset K \subset [N]$ with $K_{-L_1}\cap K_{L_2}=\emptyset $. A matrix $A\in \R^{m\times N}$ is said to satisfy 
 the \emph{finite NSP} with respect $K_{-L_1}$, $K_{L_2}$, and $K$, if
 \noeqref{FNSP}
 \begin{align}
  \ker(A)\cap N_K\cap H_{K_{L_2},K_{-L_1}} =\{0\},\label{FNSP}\tag{$\mathcal{F}$-NSP}
 \end{align}
where $H_{K_{L_2},K_{-L_1}}=\{w\in \R^N: w_i\le 0 \text{ for }i \in K_{L_2} \text{ and } w_i\ge 0 \text{ for } i \in K_{-L_1} \}$.
\end{definition}

 We will now see that the $\mathcal{F}$-NSP is indeed necessary and sufficient to recover a bipolar finite-valued signal via \eqref{PBFin}.

\begin{theorem}\label{thm:NSPBF}
Let $x_0 \in \R^N$ and $K_{-L_1},K_{L_2}\subset [N]$ be defined as in \eqref{Not:1} and \eqref{Not:2}, and let $A\in \R^{m\times N}$. 
Then the following conditions are equivalent:
\begin{enumerate}
\item[(i)] The vector $x_0$ is the unique solution of \eqref{PBFin} with $b=Ax_0$.
\item[(ii)] The matrix $A$ fulfills the $\mathcal{F}$-NSP with respect to the sets $K_{-L_1}, K_{L_2}$ and $K=\bigcup_{i\in\{-L_1,\dots, L_2\}\setminus \{0\}}K_i$.
\end{enumerate}
\end{theorem}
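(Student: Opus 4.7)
I would follow the exact two-direction pattern used for Theorem \ref{thm:BTNSP}, of which this is the natural generalization: setting $L_1=L_2=1$ makes $\hat{K}$ empty and recovers the bipolar ternary statement. The plan is first to prove the sufficient direction (ii) $\Rightarrow$ (i) by a triangle-inequality argument on the perturbation $v = x - x_0$, and then to prove the converse (i) $\Rightarrow$ (ii) by contraposition, by exhibiting an explicit feasible competitor $w = x_0 + u$ of $x_0$ with $\|w\|_1 \le \|x_0\|_1$.

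For (ii) $\Rightarrow$ (i), I would take a feasible $x$ for \eqref{PBFin} with $Ax = Ax_0$ and set $v := x - x_0 \in \ker(A)$. The box constraints force $v \in H_{K_{L_2}, K_{-L_1}}$: on $K_{L_2}$, $(x_0)_i = L_2$ and $x_i \le L_2$ give $v_i \le 0$, and symmetrically $v_i \ge 0$ on $K_{-L_1}$. If $v \ne 0$, the $\mathcal{F}$-NSP yields $\|v_K\|_1 < \|v_{K^C}\|_1$. Combining the coordinate inequality $|x_i| \ge |(x_0)_i| - |v_i|$ on $K$ with $|x_i| = |v_i|$ on $K^C$ gives
\[
\|x\|_1 \,\ge\, \|x_0\|_1 + \bigl( \|v_{K^C}\|_1 - \|v_K\|_1 \bigr) \,>\, \|x_0\|_1,
\]
contradicting the optimality of $x$.

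For (i) $\Rightarrow$ (ii) I would argue by contraposition. Assume the $\mathcal{F}$-NSP fails and pick a nonzero $u \in \ker(A) \cap N_K \cap H_{K_{L_2}, K_{-L_1}}$. Since the intersection is a cone, rescale so that $\|u\|_\infty$ is smaller than $L_1 + L_2$ and smaller than the minimum distance from each $(x_0)_i$, $i \in \hat{K}$, to $\{-L_1, 0, L_2\}$. Then $w := x_0 + u$ still lies in $[-L_1, L_2]^N$ (the sign conditions on $K_{L_2}$ and $K_{-L_1}$ control those blocks, and the smallness controls $\hat{K} \cup K^C$), while $Aw = Ax_0$ and $w \ne x_0$. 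With this scaling $\mathrm{sgn}(w_i) = \mathrm{sgn}((x_0)_i)$ on $\hat{K}$, $L_2 + u_i \ge 0$ on $K_{L_2}$ and $-L_1 + u_i \le 0$ on $K_{-L_1}$, so a block-by-block expansion of $\|w\|_1$ gives
\[
\|w\|_1 - \|x_0\|_1 \,=\, -\|u_{K_{L_2}}\|_1 - \|u_{K_{-L_1}}\|_1 + \sum_{i \in \hat{K}} \mathrm{sgn}\bigl((x_0)_i\bigr)\, u_i + \|u_{K^C}\|_1 .
\]
The goal is then to conclude that this quantity is $\le 0$ from $u \in N_K$, contradicting the unique recoverability of $x_0$.

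The main obstacle is the mixed-sign term over $\hat{K}$: the hypothesis $u \in N_K$ controls $\|u_{\hat{K}}\|_1$ only in absolute value, whereas the displayed expression contains the signed quantity $\sum_{\hat{K}} \mathrm{sgn}((x_0)_i)\, u_i$, and the $\mathcal{F}$-NSP does not pin down this sign. In the bipolar ternary case this term is vacuous ($\hat{K} = \emptyset$), which is why the proof of Theorem \ref{thm:BTNSP} goes through verbatim. For general alphabets I would first reduce the problem in the spirit of Theorem \ref{thm:terKsimple}, by showing that unique recoverability of $x_0$ is equivalent to unique recoverability of every vector sharing the same extremal support $K_{L_2} \cup K_{-L_1}$ and taking arbitrary interior values in $(-L_1, L_2)\setminus\{0\}$ on $\hat{K}$. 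Once $\hat{K}$ is effectively absorbed into this class, the problematic signed term vanishes and the expansion collapses to $\|u_{K^C}\|_1 - \|u_{K_{L_2}}\|_1 - \|u_{K_{-L_1}}\|_1 \le 0$, which is exactly $u \in N_K$, closing the contradiction.
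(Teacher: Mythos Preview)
The paper gives no proof of Theorem~\ref{thm:NSPBF}; it only asserts (for the unipolar analog) that the argument ``follows the same arguments as the proofs for Theorem~\ref{thm:BinNec} and~\ref{thm:BTNSP}'' and treats the bipolar case the same way. So there is nothing to compare against, and your job is really to check whether the bipolar--ternary argument extends.

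Your proof of (ii) $\Rightarrow$ (i) is correct and in fact slightly cleaner than the coordinate bookkeeping in the proof of Theorem~\ref{thm:BTNSP}: the reverse triangle inequality on $K$ together with $(x_0)_{K^C}=0$ gives $\|x\|_1 \ge \|x_0\|_1 + (\|v_{K^C}\|_1 - \|v_K\|_1) > \|x_0\|_1$.

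The obstacle you flag in (i) $\Rightarrow$ (ii) is genuine, and your proposed fix does not remove it. Your reduction ``in the spirit of Theorem~\ref{thm:terKsimple}'' would require that unique recoverability of $x_0$ be equivalent to unique recoverability of any $x_0'$ sharing the same $K_{L_2},K_{-L_1},K^C$ but with arbitrary signs on $\hat K$; this is false. Take $N=3$, $L_1=L_2=2$ and $\ker(A)=\spann\{(-1,3,1)\}$. Then $x_0=(2,1,0)$ is the unique minimizer of \eqref{PBFin} (the feasible points are $x_0+t(-1,3,1)$ with $t\in[0,\tfrac13]$, and there $\|x_0+t(-1,3,1)\|_1=3+3t$), while $x_0'=(2,-1,0)$ is not (the feasible range is $t\in[0,1]$ and the minimum norm, equal to $2$, is attained at $t=\tfrac13$). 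More importantly, this same example refutes the implication (i) $\Rightarrow$ (ii) itself: with $K_{L_2}=\{1\}$, $K_{-L_1}=\emptyset$, $K=\{1,2\}$ one has $(-1,3,1)\in\ker(A)\cap N_K\cap H_{K_{L_2},K_{-L_1}}$, so the $\mathcal F$-NSP fails although $x_0=(2,1,0)$ is uniquely recovered.

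The underlying issue is the paper's assertion, just before Lemma~\ref{lem:subBF}, that ``the descent cone $\mathcal D(f,x_0)$ \ldots\ equals precisely the cone $N_K\cap H_{K_{L_2},K_{-L_1}}$.'' That identity holds only when $\hat K=\emptyset$. In general the descent cone depends on the sign pattern of $x_0$ on $\hat K$ (as is already visible in Lemma~\ref{lem:subBF}, where $\hat K^+$ and $\hat K^-$ enter separately) and is a \emph{strict} subset of $N_K\cap H_{K_{L_2},K_{-L_1}}$. Hence the $\mathcal F$-NSP as stated is sufficient but not necessary for unique recovery of $x_0$; the equivalence in Theorem~\ref{thm:NSPBF} is correct only in the extremal case $\hat K=\emptyset$, which is exactly the case you noted goes through verbatim.
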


It remains to compute the number of measurements for \eqref{PBFin}, which are sufficient to succeed in the case that $A\in \R^{m \times N}$ 
is a Gaussian matrix. With the same ideas as used in Subsections \ref{sec:PhaseTransBin} and \ref{sec:PhaseTransPM}, we will compute the statistical dimension of the 
cone corresponding to \eqref{PBFin} for a bipolar finite signal $x_0$.

With the convex function
\begin{align}\label{fBF}
 f(x)=\begin{cases}
    \|x\|_1 & \text{for} \quad x\in [-L_1,L_2]^N\\
    \infty & \text{otherwise,}
   \end{cases}
\end{align}
the descent cone $\mathcal{D}(f,x_0)$ of $f$ at $x_0$ equals precisely the cone  $N_K\cap H_{K_{L_2},K_{-L_1}}$. Thus, we now describe the 
subdifferential of $f$ at $x_0$ in order to compute the statistical dimension of the cone $N_K\cap H_{K_{L_2},K_{-L_1}}$. The proof of the following lemma is again postponed to the Appendix \ref{subsecAStat2}.

\begin{lemma}\label{lem:subBF}
Let $x_0 \in \R^N$ and $K_{-L_1},K_{L_2}\subset [N]$ be defined as in \eqref{Not:1} and \eqref{Not:2}. Further let $\hat{K}^+=\bigcup_{i=1}^{L_2-1}K_i$ and $\hat{K}^-=\bigcup_{i=-L_1+1}^{-1}K_i$. Then the subdifferential takes the form
 \begin{align}
  \partial f(x_0)=\left\{s\in \R^N: s_i\ge 1 \text{ for }  i\in K_{L_2}, s_i\le -1 \text{ for }i\in K_{-L_1}, s_i=1\text{ for }  i\in \hat{K}^+,\right.\\ s_i=-1\text{ for }  i\in \hat{K}^-
  \left.\text{ and } |s_i|\le 1 \text{ for } i\in K_0\right\}.
 \end{align}
\end{lemma}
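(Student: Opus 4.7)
The plan is to reduce the computation to a pointwise (coordinatewise) scalar optimization, exploiting both the separable structure of $\|\cdot\|_1$ and of the box constraint $[-L_1,L_2]^N$. Starting from the definition
\[
\partial f(x_0)=\bigl\{s\in\R^N : f(y)\ge f(x_0)+\langle s,y-x_0\rangle \text{ for all } y\in\R^N\bigr\},
\]
the inequality is vacuous for $y\notin[-L_1,L_2]^N$ since $f(y)=\infty$ there. Consequently $s\in\partial f(x_0)$ if and only if $x_0$ maximizes $\langle s,y\rangle-\|y\|_1$ over the box. Writing this as $\sum_{j=1}^N\bigl(s_jy_j-|y_j|\bigr)$ and using the product structure of $[-L_1,L_2]^N$, the condition decouples: $s\in\partial f(x_0)$ iff for every coordinate $j$ the value $(x_0)_j$ is a maximizer of the scalar function $h_j(t):=s_j t-|t|$ on $[-L_1,L_2]$.

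Next, I would run through the five coordinate types and determine the admissible $s_j$ for each. For $j\in K_0$ (so $(x_0)_j=0$), $h_j$ is piecewise linear with slopes $s_j-1$ on $[0,L_2]$ and $s_j+1$ on $[-L_1,0]$; the point $0$ is a maximizer iff $s_j\le 1$ and $s_j\ge -1$, giving $|s_j|\le 1$. For $j\in\hat K^+$ (so $(x_0)_j\in\{1,\dots,L_2-1\}$ is an interior point of $[-L_1,L_2]$), optimality at an interior point forces the derivative to vanish, hence $s_j=1$; symmetrically $s_j=-1$ for $j\in\hat K^-$. For $j\in K_{L_2}$ (so $(x_0)_j=L_2$ is on the boundary), the first-order optimality condition at the right endpoint reads $h_j'(L_2^-)=s_j-1\ge 0$, i.e.\ $s_j\ge 1$ (with no upper bound, since the slope on $[-L_1,0]$ is $s_j+1>0$, so $0$ beats $-L_1$ and $L_2$ beats $0$); symmetrically $s_j\le -1$ for $j\in K_{-L_1}$.

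This coordinatewise analysis already yields both inclusions, since at each $j$ the argument is an equivalence: the set of admissible slopes is exactly what is described. Assembling the per-coordinate conditions reproduces the claimed characterization of $\partial f(x_0)$. The only mildly delicate point — and the one I would state carefully rather than skim over — is the boundary case $j\in K_{L_2}$ (and symmetrically $K_{-L_1}$): one must verify that the right-endpoint first-order condition $s_j\ge 1$ is not only necessary but also sufficient globally on $[-L_1,L_2]$, which follows because $s_j\ge 1$ implies $h_j$ is nondecreasing on $[0,L_2]$ and that the negative branch has value at most $h_j(0)=0\le h_j(L_2)=L_2(s_j-1)$. No genuine obstacle beyond this bookkeeping arises; the result is essentially Lemma~\ref{lem:subdiff} replicated in each of the five coordinate regimes, and the separability of the box constraint is what allows the argument to go through cleanly without the global test-vector gymnastics used in the unipolar binary case.
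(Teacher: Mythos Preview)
Your proof is correct and takes a genuinely different route from the paper's.

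The paper proves the two inclusions separately and by brute force. For $S\subseteq\partial f(x_0)$ it fixes $s\in S$ and an arbitrary $y\in[-L_1,L_2]^N$, writes the subgradient inequality as $\sum_i a_i\ge 0$ with $a_i=|y_i|-s_iy_i+s_i(x_0)_i-|(x_0)_i|$, and then verifies $a_i\ge 0$ in each of the five coordinate regimes, sometimes splitting further on the sign of $y_i$. For the reverse inclusion it plugs in explicit test vectors $y^j=x_0-(x_0)_je_j$, $y^j=x_0+e_j$, $z^j=x_0-((x_0)_j-L_2)e_j$, etc., one per coordinate, to squeeze out the constraints on $s_j$.

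Your reformulation---$s\in\partial f(x_0)$ iff $x_0$ maximizes $\langle s,y\rangle-\|y\|_1$ over the box---together with separability collapses all of this to five one-dimensional optimality checks for the piecewise-linear function $h_j(t)=s_jt-|t|$ on $[-L_1,L_2]$. Because each 1D analysis is a genuine equivalence, you obtain both inclusions at once without constructing test vectors or splitting on $\operatorname{sign}(y_i)$. The trade-off is that the paper's argument is entirely elementary and self-contained, while yours relies on the (standard but not explicitly stated) observation that the argmax of a separable concave function over a product set is the product of the coordinate-wise argmaxes. Your handling of the boundary cases $K_{L_2}$ and $K_{-L_1}$ is correct; the only thing I would make slightly more explicit is the $\hat K^+$ case: since $(x_0)_j\in(0,L_2)$ lies in the interior of a segment on which $h_j$ is \emph{linear} with slope $s_j-1$, maximality forces $s_j-1=0$ (a linear function on an interval has no interior maximizer unless it is constant), and then one checks the negative branch as you do for the other cases.
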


With this description we can prove the following theorem on the statistical dimension and therefore on the sufficient number of measurements. 
Due to the similarity to the proofs of the corresponding theorems in the last subsections, we again omit the proof. The
phase transition which this result determines is illustrated in Figure \ref{fig:StatBFin}.

\begin{theorem}\label{mainthm4}
Fix a tolerance $\varepsilon >0$. Let $A\in \R^{m \times N}$ be Gaussian, $b=Ax_0$ and let $x_0$, $K$, and $\hat{K}$ be defined as in 
\eqref{Not:1} and \eqref{Not:2}. Further set $k_i=|K_i|$, for $i\in \{-L_1,0,L_2\}$, and $\hat{k}=|\hat{K}|$. If
\begin{align}
 m\ge& \inf_{\tau\ge0}\left\{\hat{k}(1+\tau^2)+k_{-L_1}\int^{\infty}_{-\tau}(u-\tau)^2\phi(u)du+k_{L_2}\int_{-\infty}^{\tau}(u-\tau)^2\phi(u)du + k_0\int_{\tau}^{\infty}(u-\tau)^2\phi(u)du,\right\}\\&+\sqrt{8\log(4/\varepsilon)N},
\end{align}
then \eqref{PBFin} will succeed to recover $x$ uniquely with probability larger $1-\varepsilon$.
\end{theorem}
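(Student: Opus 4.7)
The plan is to follow exactly the same template used in the proofs of Theorems \ref{mainthm1} and \ref{mainthm2}. First, I would verify that the descent cone $\mathcal{D}(f,x_0)$ of the objective $f$ in \eqref{fBF} at the bipolar finite-valued signal $x_0$ coincides with $N_K \cap H_{K_{L_2},K_{-L_1}}$, the cone appearing in $\mathcal{F}$-NSP. This is a direct calculation: $f(x_0+\tau y) \le f(x_0) < \infty$ forces $x_0 + \tau y \in [-L_1,L_2]^N$ for small $\tau > 0$, which translates to the inequalities $y_i \le 0$ on $K_{L_2}$ and $y_i \ge 0$ on $K_{-L_1}$ (the $H$-part), while the $\ell_1$ inequality gives the $N_K$-part once one expands $\|x_0 + \tau y\|_1$ using the known signs of $(x_0)_i$ on each $K_i$. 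This step is essentially bookkeeping and entirely parallel to the derivation in Section \ref{sec:PhaseTransBin}.

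Next, I would apply Lemma \ref{lem:subBF} and compute $\dist{\mathbf{g}}{\tau\partial f(x_0)}^2$ coordinate by coordinate, using that the subdifferential is a product set over the five index classes $K_{L_2}, K_{-L_1}, \hat{K}^+, \hat{K}^-, K_0$. For $i \in \hat{K}^+ \cup \hat{K}^-$ the set $\tau \partial f(x_0)$ pins the $i$-th coordinate to the single value $\pm\tau$, so the squared distance from a standard normal $g_i$ has expectation $1+\tau^2$, producing the $\hat{k}(1+\tau^2)$ term. For $i \in K_{L_2}$ the admissible set is $[\tau,\infty)$, yielding expectation $\int_{-\infty}^{\tau}(u-\tau)^2\phi(u)\,du$; for $i \in K_{-L_1}$ the admissible set is $(-\infty,-\tau]$, and by symmetry of $\phi$ its contribution matches the $K_{L_2}$ integral. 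For $i \in K_0$ the admissible set is $[-\tau,\tau]$, giving $2\int_{\tau}^{\infty}(u-\tau)^2\phi(u)\,du$ per index.

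Summing these independent contributions gives the function $J(\tau)$ appearing under the infimum in the theorem; by the three-step recipe from \cite{ALMT} recalled in Subsection \ref{sec:IntPhaseTrans}, the infimum is an upper bound on $\delta(\mathcal{D}(f,x_0))$. Finally, I would invoke \cite[Thm.~II]{ALMT} exactly as in the proofs of Theorems \ref{mainthm1} and \ref{mainthm2} to turn this statistical-dimension bound into the probabilistic guarantee: once $m$ exceeds this infimum plus the slack $\sqrt{8\log(4/\varepsilon)N}$, the kernel of the Gaussian matrix $A$ misses $\mathcal{D}(f,x_0)$ with probability at least $1-\varepsilon$, and by Theorem \ref{thm:NSPBF} this is equivalent to unique recovery of $x_0$ via \eqref{PBFin}.

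The whole argument is a routine generalization of the binary and ternary cases, the only genuinely new content being the five-way coordinate partition induced by $\hat{K}^\pm$ (coordinates where $\partial f$ is a singleton) versus the two boundary sets $K_{\pm L_j}$ (where $\partial f$ is a half-line) versus $K_0$ (where $\partial f$ is an interval). The one place to be careful is checking that the $K_{-L_1}$ contribution really equals the $K_{L_2}$ one despite the asymmetric statement, which is a matter of the substitution $u \mapsto -u$ together with $\phi(-u)=\phi(u)$. There is no conceptual obstacle beyond what has already been handled in the earlier theorems.
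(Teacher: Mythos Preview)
Your proposal is correct and follows precisely the template the paper itself indicates (it omits the proof, citing the similarity to Theorems \ref{mainthm1} and \ref{mainthm2} together with Lemma \ref{lem:subBF}). One small remark: your coordinate-wise computation is right, and in fact it exposes what appear to be two typos in the printed formula of Theorem \ref{mainthm4}---the $K_{-L_1}$ integrand should read $(u+\tau)^2$ (equivalently, by your symmetry observation, the $K_{-L_1}$ and $K_{L_2}$ integrals coincide), and the $k_0$ term should carry a factor $2$, exactly as in the ternary case $\Delta_{\pm\text{ter}}$; so your statement that the sum ``gives the function $J(\tau)$ appearing under the infimum'' is correct in spirit but does not literally match the displayed expression.
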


\begin{figure}
\begin{center}
\includegraphics[scale=0.5]{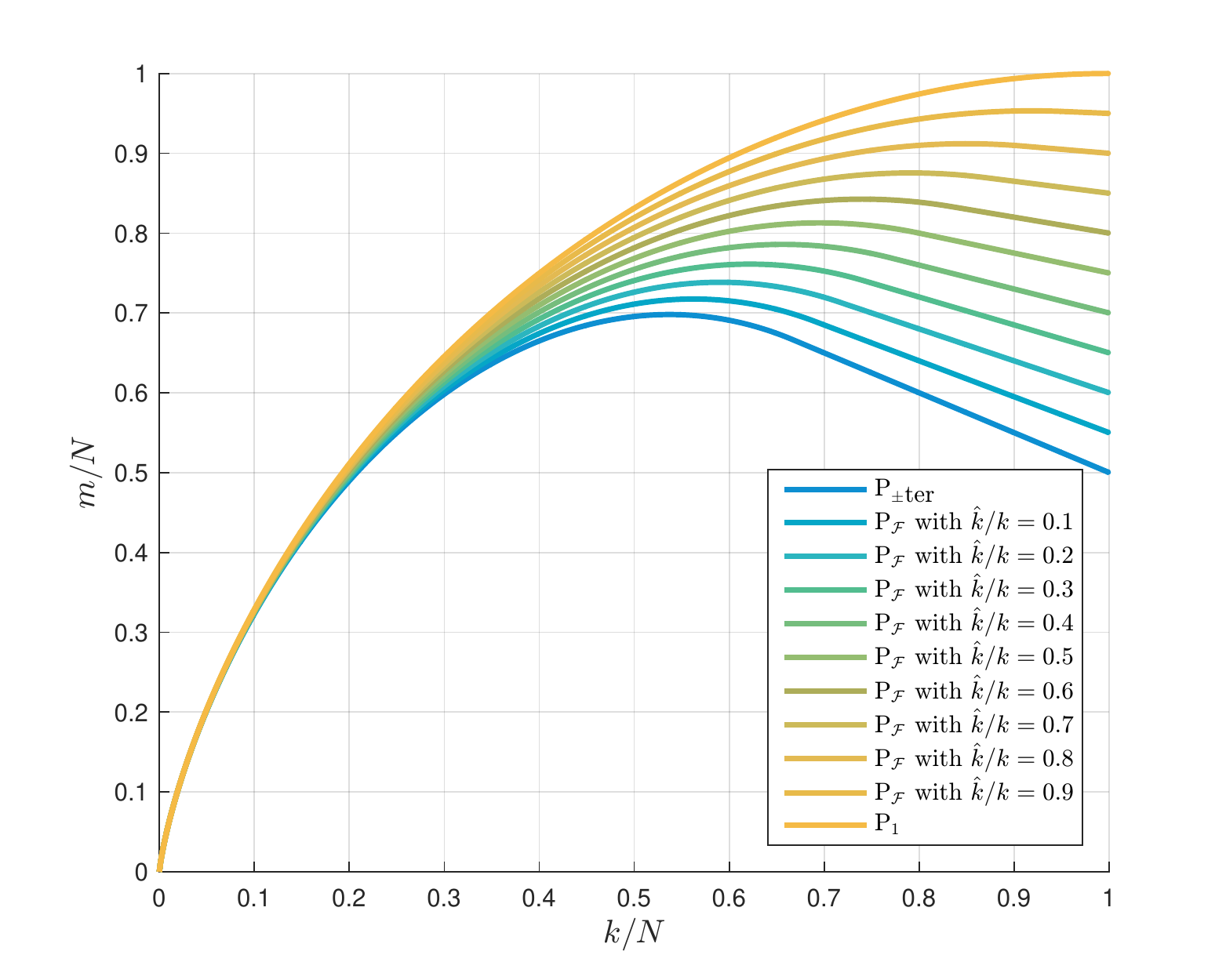}\includegraphics[scale=0.5]{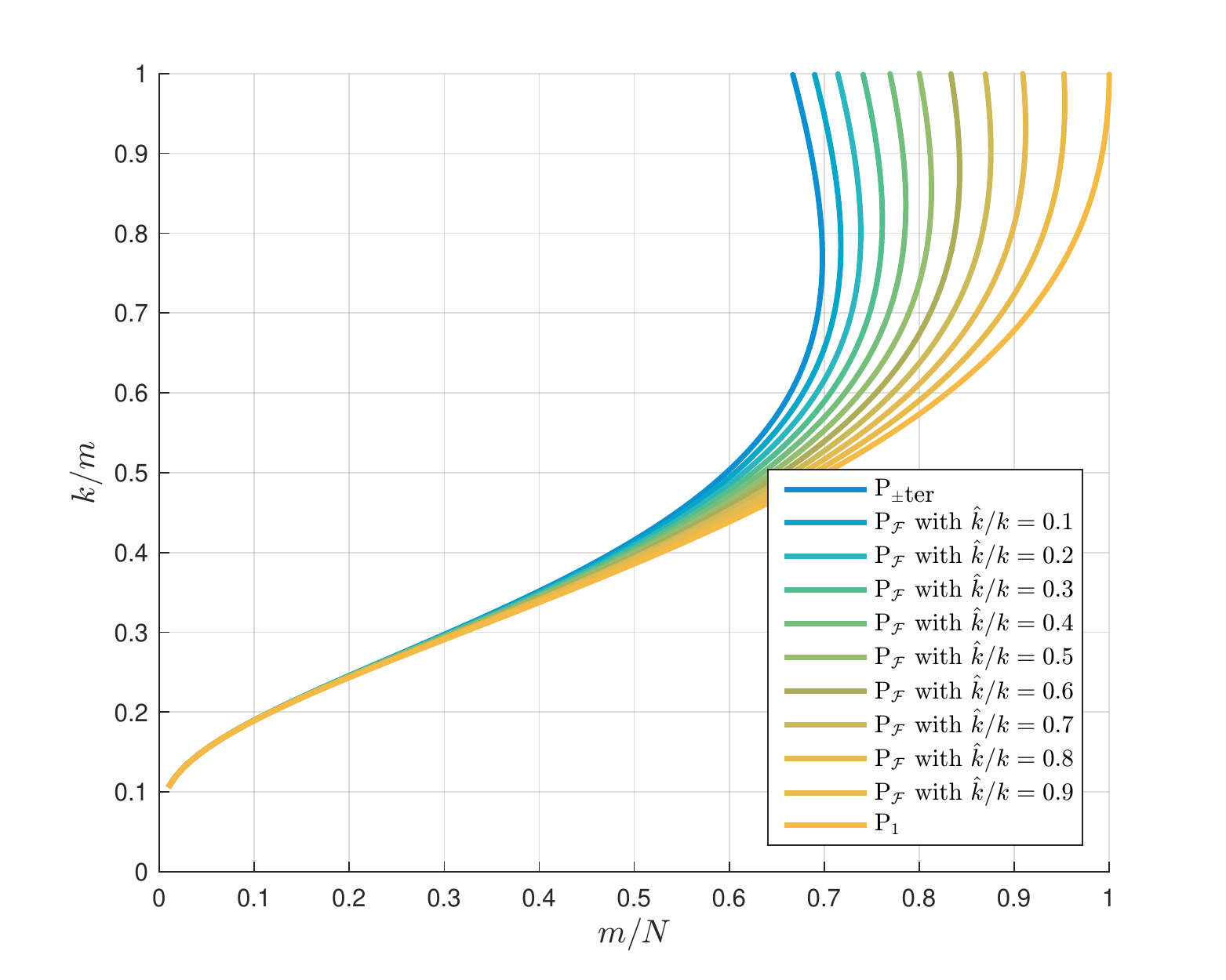}
\caption{Phase transition of the convex program \protect\eqref{PBFin} according to the ratio of $\hat{k}$ to $k$, where $k$ is the size of the whole support of a bipolar finite-valued signal and $\hat{k}$ the number of entries in the signal not equal to zero, to the smallest or to the largest value of the given alphabet. In the left illustration recovery is likely above the curves and in the right illustration below the curves.}\label{fig:StatBFin}
\end{center}
\end{figure}

We conclude the theoretical part with an overview of the different NSP conditions.

\begin{remark}\rm
Table \ref{table1} provides an overview of the NSP conditions introduced throughout this paper. 
 \begin{table}[H]
\begin{center}
\renewcommand{\arraystretch}{2}
\begin{tabular}{ | c | c | c | c | c | }
\hline
$\mathcal{A}$ & NSP & Condition
\\ \hline
$\{ 0, 1 \}$ & B-NSP & $\ker(A)\cap N_K\cap H_{K,K^C} =\{0\}$ or $\ker(A)\cap N^+\cap H_{K,K^C} =\{0\}$
\\ \hline

$\{ 0, \pm 1 \}$ & BT-NSP & $\ker(A)\cap N_K\cap H_{K_1,K_{-1}} =\{0\}$
\\ \hline
$\{ 0, \dots, L \}$ & $\mathcal{UF}$-NSP & $\ker(A)\cap N^+\cap H_{K_L,K^C} =\{0\}$
\\ \hline
$\{ -L_1, \dots, L_2 \}$ & $\mathcal{F}$-NSP & $ \ker(A)\cap N_K\cap H_{K_{L_2},K_{-L_1}} =\{0\}$
\\ \hline
\end{tabular}
\caption{A summary of all NSP conditions that appeared in this paper.}\label{table1}
\end{center}
\end{table}
\renewcommand{\arraystretch}{1}
\end{remark}

\section{Numerical Results}\label{sec:numerics}

In this last section we will empirically investigate our results. We first consider the noiseless case with the ambient dimension
$N=500, 1000$. The experiments are then conducted as follows. For each sparsity level $k\in \{0.02,0.04, \dots, 1\}\cdot N$ and
number of measurements $m\in \{0.02,0.04, \dots, 1\}\cdot N$, we draw a support set $K$ of size $k$ uniformly at random, as well
as a Gaussian matrix $A\in \R^{m \times N}$. To check the algorithms \eqref{Pbin} and \eqref{PM} and to compare it with \eqref{P1},
we chose the elements on the support $K$ equal to $1$ to form the vector $x_0$ and for \eqref{PM} we chose the elements on $K$
uniformly at random $\pm 1$.

Figure \ref{fig:Num} shows the reconstruction results of \eqref{Pbin} and \eqref{PM} in the ambient dimension $N=1000$. To illustrate
that the curves appear exactly the same in other dimensions and for
a comparison with \eqref{P+}, Figure \ref{fig:Comp} shows the reconstruction results, but also the runtime, of \eqref{Pbin} and
\eqref{P+} for $N=500$. In Figure \ref{fig:Comp}, we illustrate not only that the reconstruction becomes better, but also the
algorithms are faster. Figure \ref{fig:Num} also points out that the in Sections \ref{sec:PhaseTransBin} and \ref{sec:PhaseTransPM} theoretically derived phase transitions describe the numerical experiments very well. To see this compare Figure \ref{fig:Num}~(a) with Figure \ref{fig:StatBin} as well as Figure \ref{fig:Num}~(b) with Figure \ref{fig:StatPM}. Thus we see that to recover a unipolar binary signals at most around $N/2$ measurements are needed for \eqref{Pbin} to succeed (see Figure \ref{fig:Num}~(a)) and in the case of bipolar ternary signals at most around $0.7N$ measurements are needed for \eqref{PM} to succeed (see Figure \ref{fig:Num}~(b)).

\begin{figure}[ht]
\begin{center}
\includegraphics[scale=0.37]{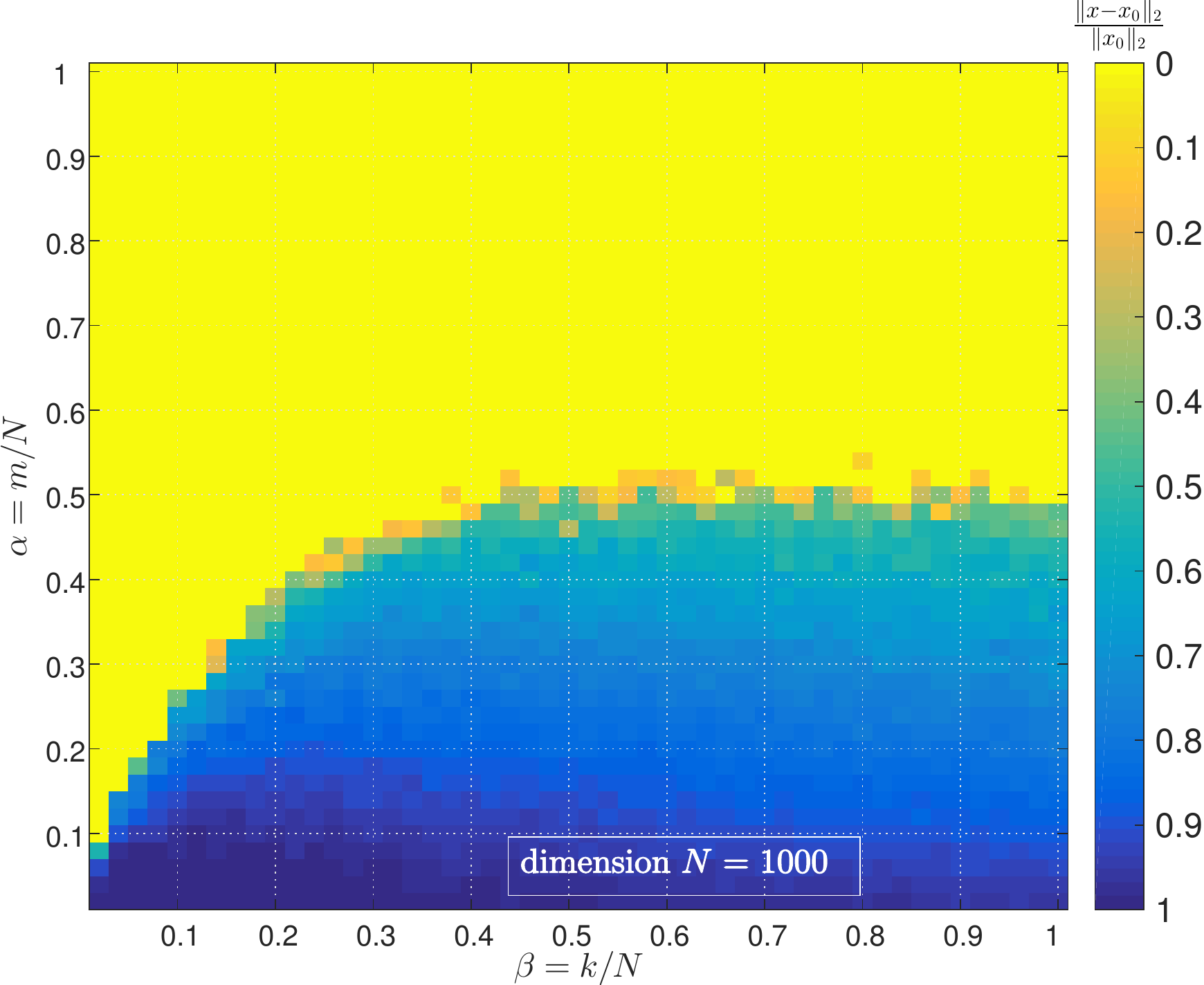}\hspace{1cm}\includegraphics[scale=0.37]{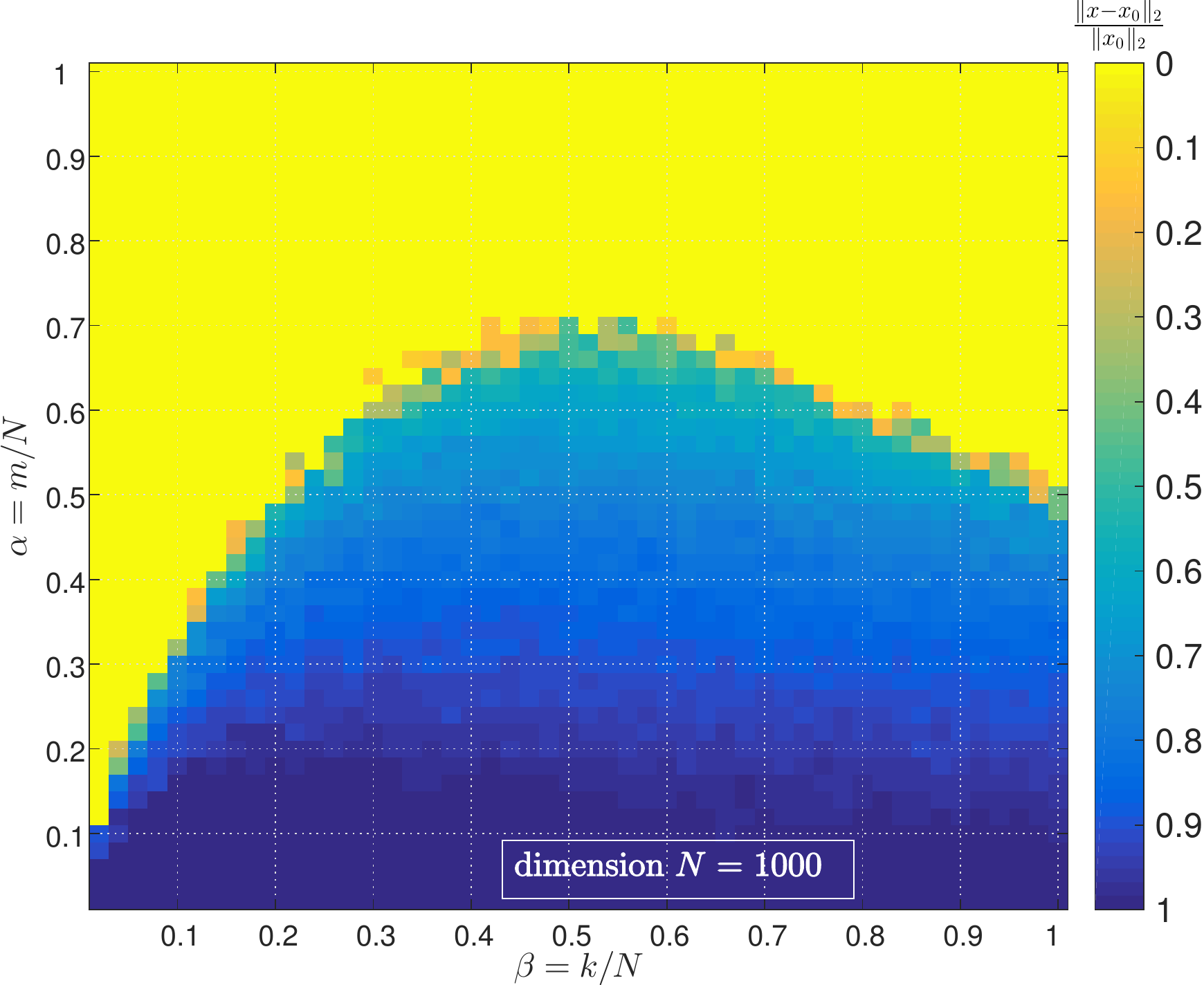}
\put(-310,-15){(a)}
\put(-96,-15){(b)}
\caption{Reconstruction error of the convex programs \protect\eqref{Pbin} for unipolar binary vectors (see (a)) and of \protect\eqref{PM} for bipolar ternary vectors (see (b)) 
depending on the number of measurements $m$ and the support size $k$ in the ambient dimension $N=1000$.}\label{fig:Num}
\end{center}
\end{figure}

\begin{figure}[ht]
\begin{center}
\includegraphics[scale=0.37]{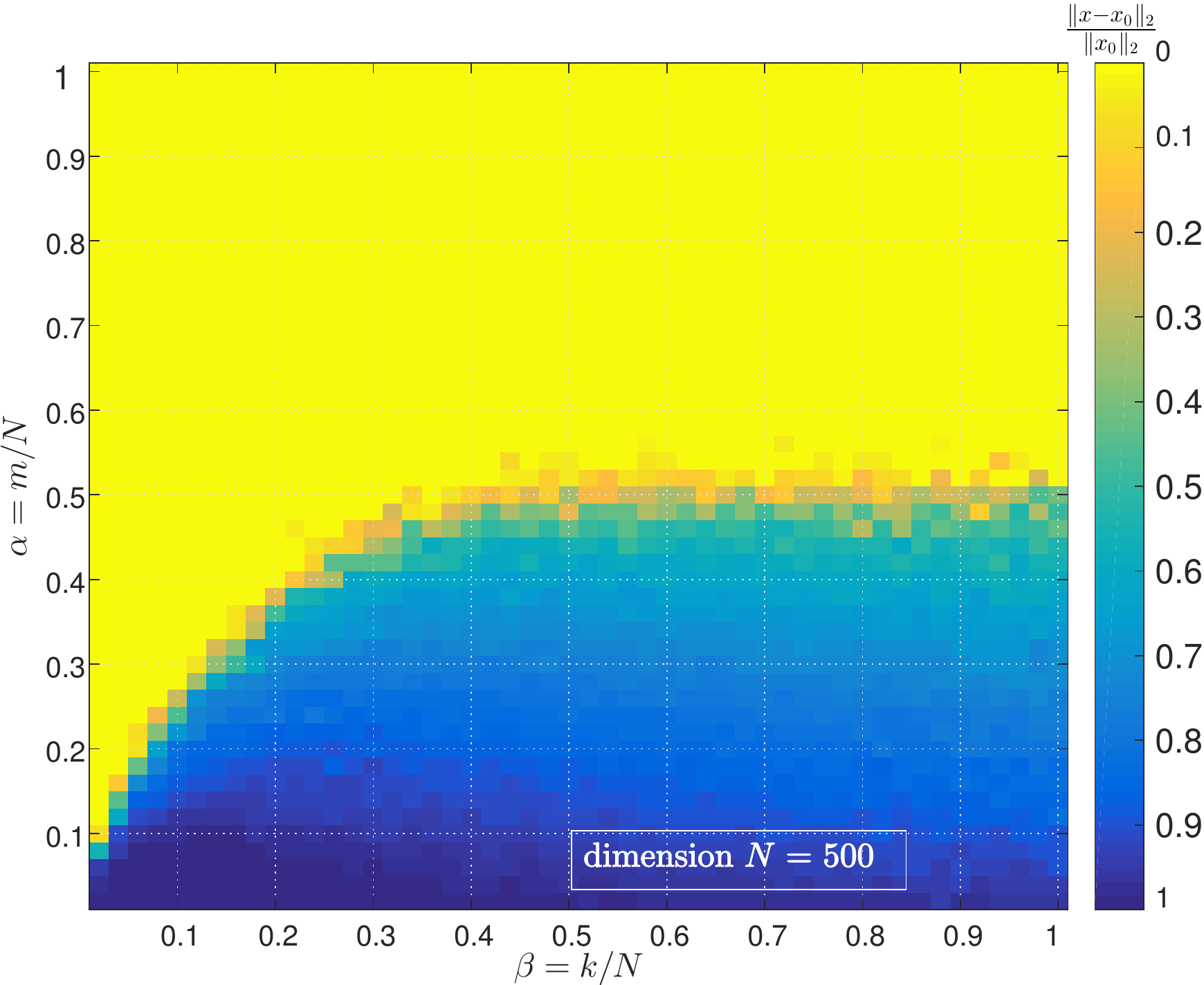}\hspace{1cm}\includegraphics[scale=0.37]{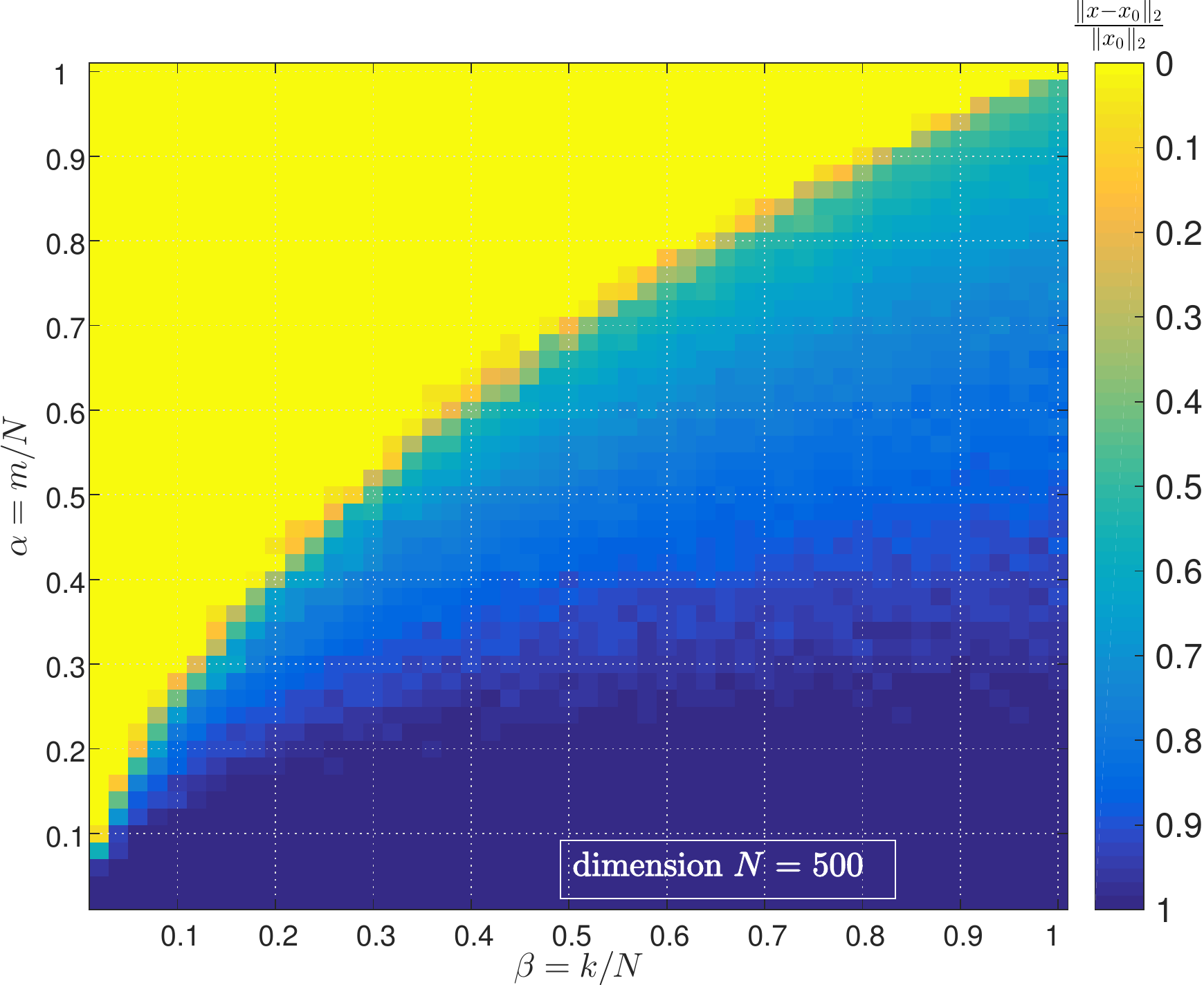}
\put(-310,-15){(a)}
\put(-83,-15){(b)}\\[2ex]
\includegraphics[scale=0.37]{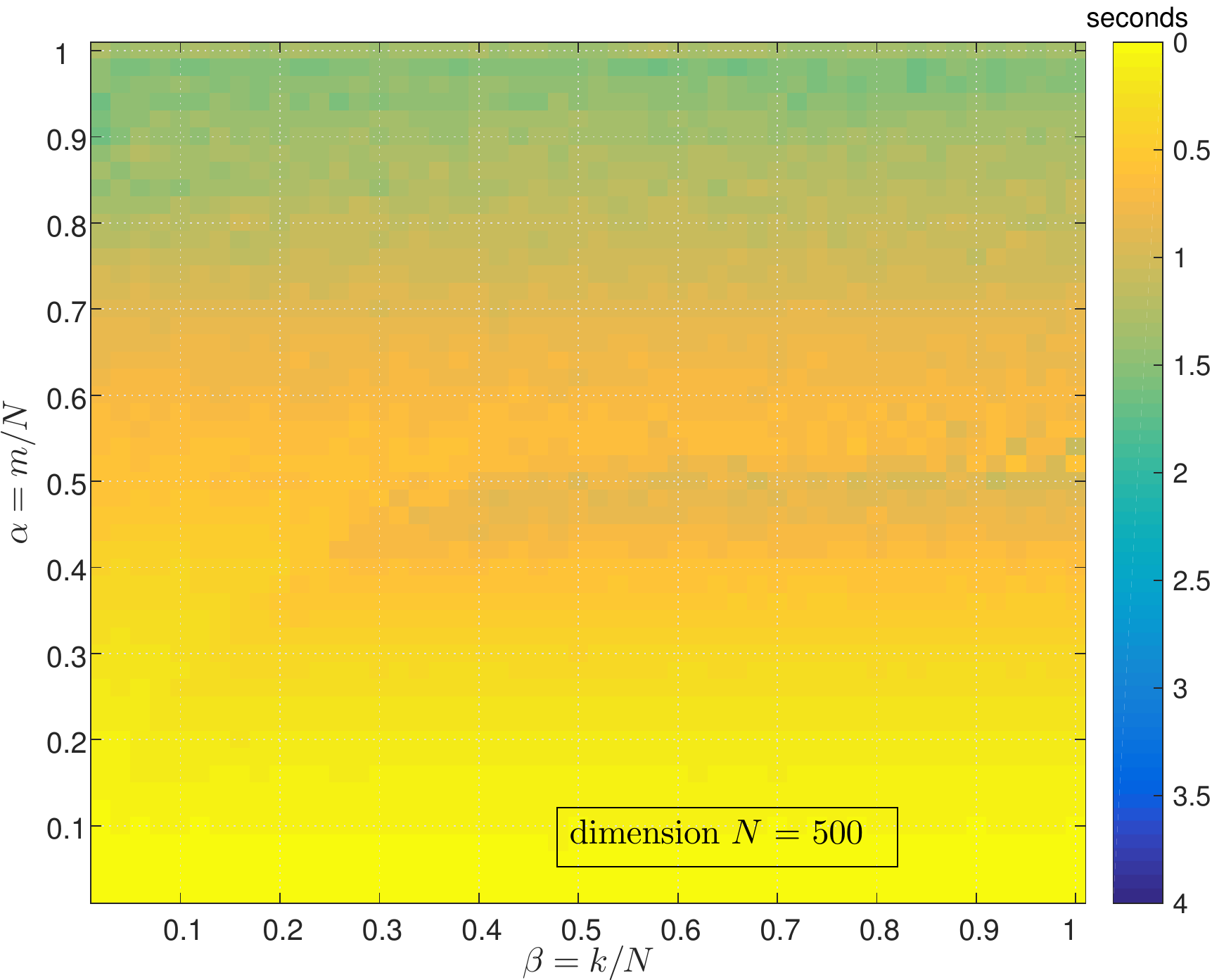}\hspace{1cm}\includegraphics[scale=0.37]{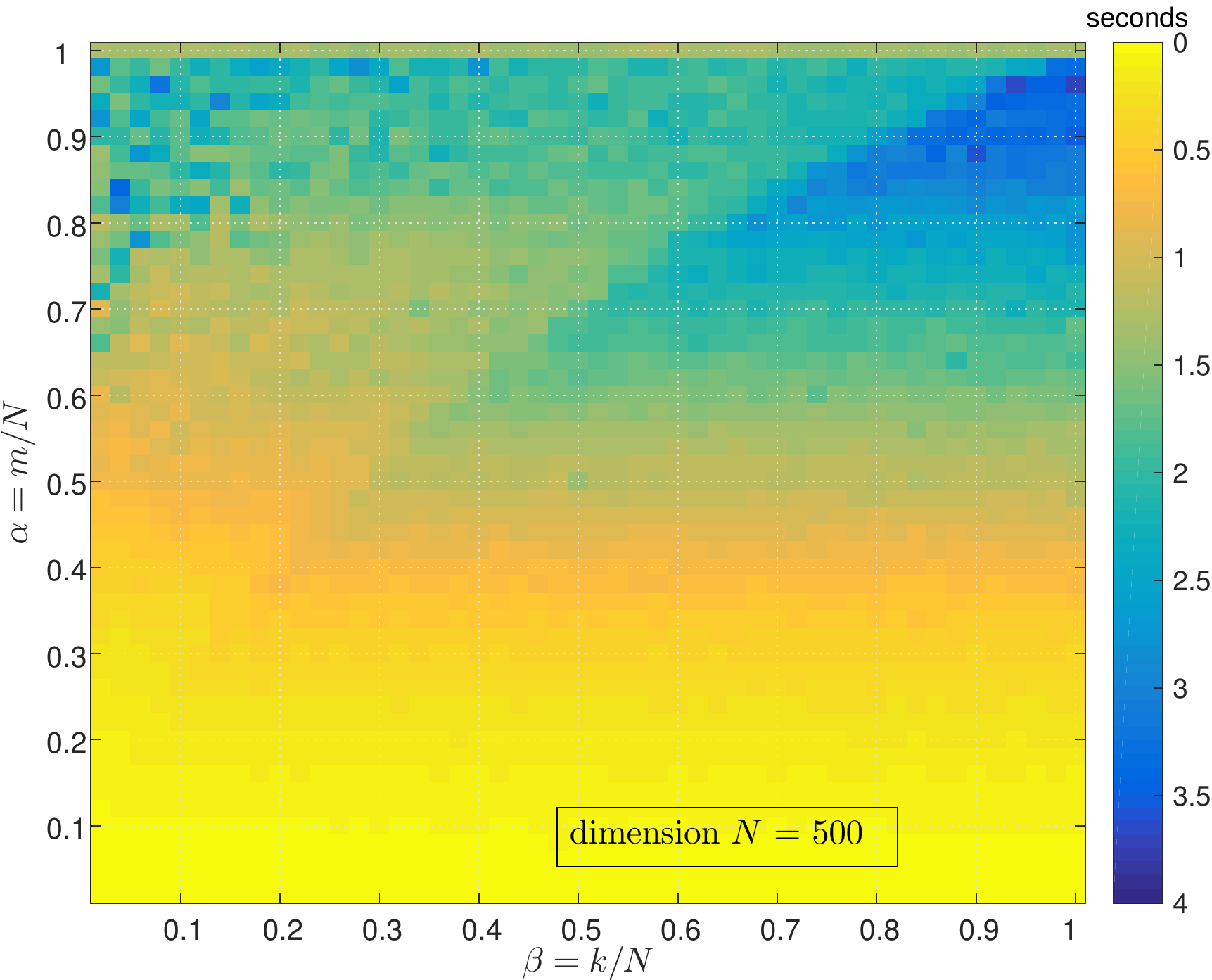}
\put(-310,-15){(c)}
\put(-83,-15){(d)}
\caption{Reconstruction error (see (a) and (b)) and runtime (see (c) and (d)) of the convex programs \protect\eqref{Pbin} (see (a) and (c)) and \protect\eqref{P+} 
(see (b) and (d)) for unipolar binary vectors depending on the number of measurements $m$ and the support size $k$ in the ambient dimension $N=500$.}\label{fig:Comp}
\end{center}
\end{figure}

To investigate the results for \eqref{PUF}, we considered unipolar ternary signals having entries in $\mathcal{A}=\{0,1,2\}$. We draw for 
each number of measurements $m$, as well as for each sparsity level $k$ and for
$k_1\in \{\frac{k}{10}, \frac{3k}{10}, \frac{5k}{10}, \frac{7k}{10}, \frac{9k}{10}\}$ two support sets $K_1$ of size $k_1$ and $K_2$ of size $k-k_1$. We then chose the elements on
$K_1$ equal to $1$ and on $K_2$ equal to $2$ to form the vector $x_0$. Thus, as done theoretically in Section \ref{sec:larger},  we compute the recovery error depending on the ratio of $|K_1|$ and $|K_2|$. Figure \ref{fig:NumTer} illustrates
the results for each $k_1=|K_1|$. If we compare the theoretically in Section \ref{sec:larger} derived phase transition  with the numerically derived one (compare also Figure \ref{fig:StatTer} with Figure \ref{fig:NumTer}~(a)-(e)), we again see that the theory describes the experiments very well. We also see that, as the relative size of $K_1$ to $K$ increases  the number of measurements for \eqref{PUF} to succeed increases. Thus, if $k_1/k$ is very small, the numerically derived phase transition is nearly the same as for \eqref{Pbin} (compare Figure \ref{fig:Num} with \ref{fig:NumTer}~(a)), and if $k_1/k$ is around $1$ it is nearly the same as for \eqref{P+} (compare Figure \ref{fig:Comp} with Figure \ref{fig:NumTer}~(e)).

\begin{figure}[H]
\begin{center}
\includegraphics[scale=0.3]{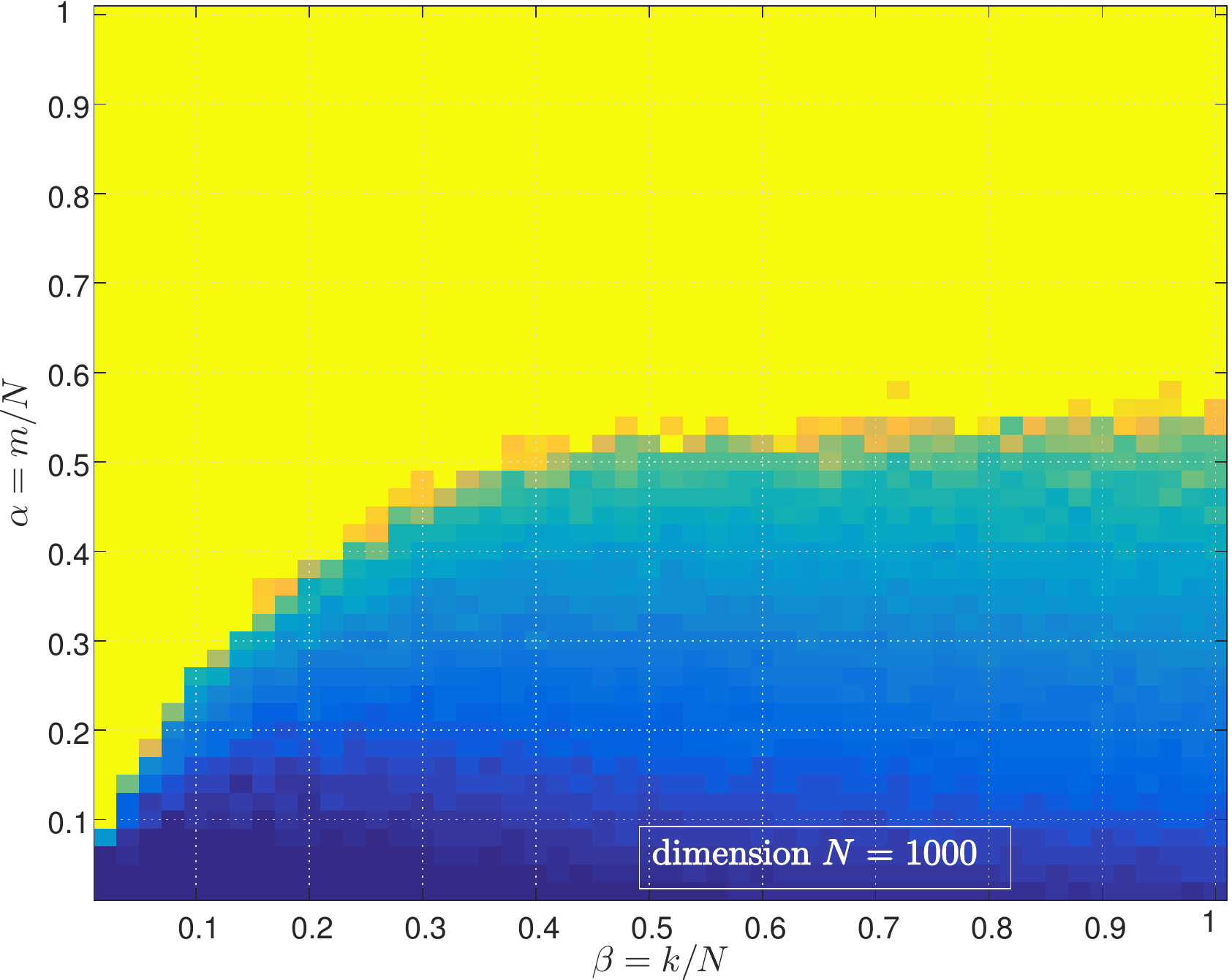}\includegraphics[scale=0.3]{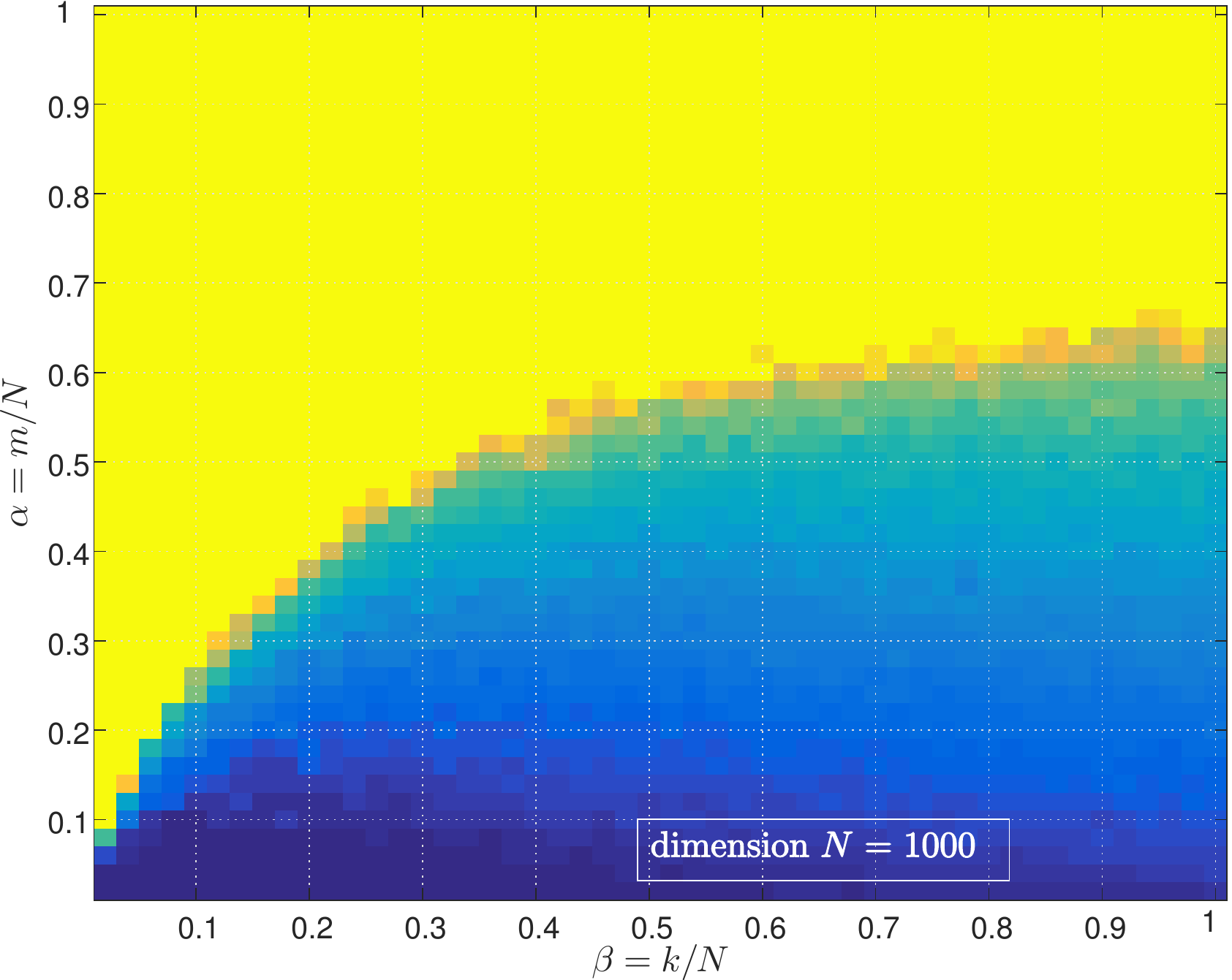}
\includegraphics[scale=0.3]{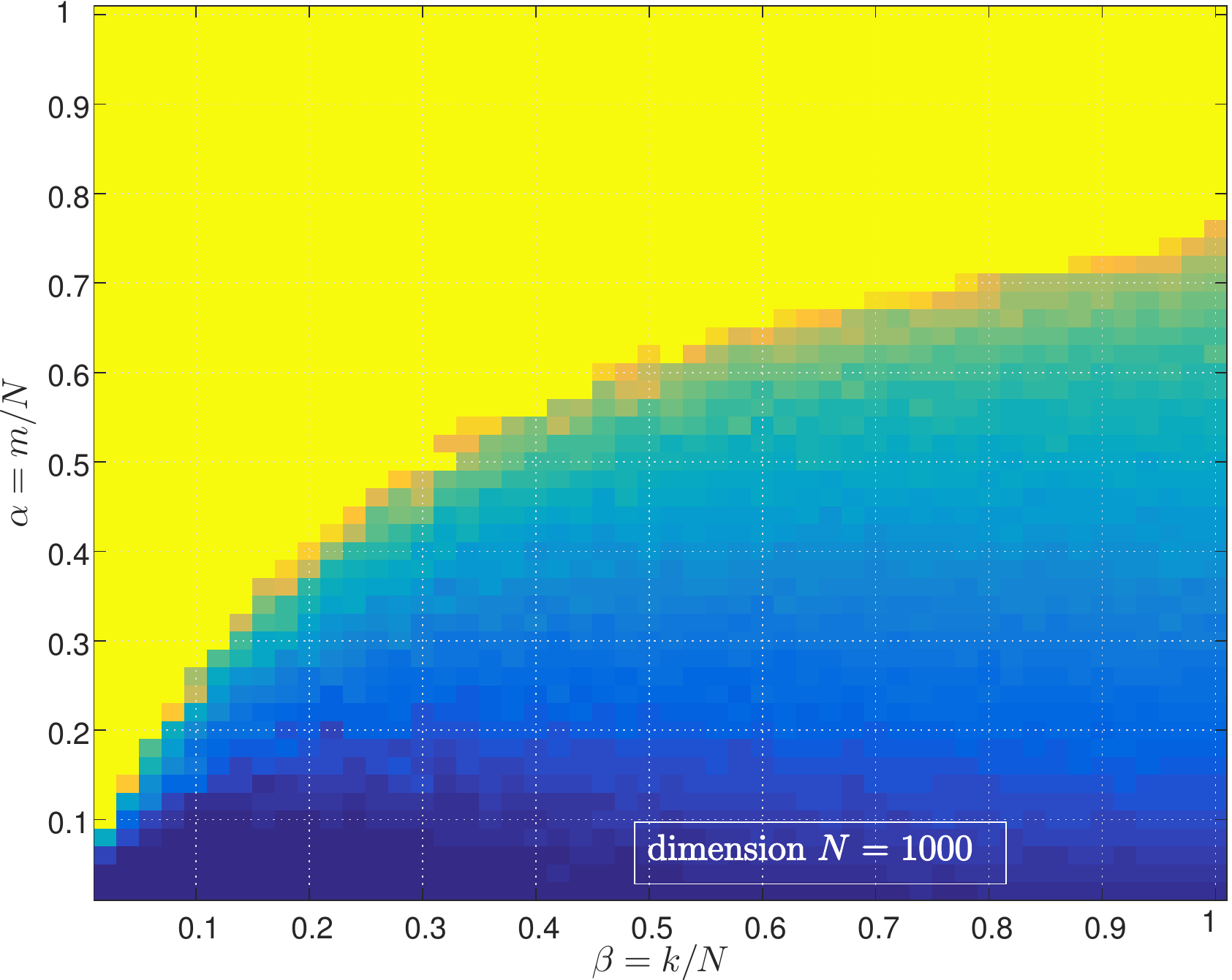}
\put(-369,-15){(a)}
\put(-226,-15){(b)}
\put(-76,-15){(c)}\\[2ex]
\includegraphics[scale=0.3]{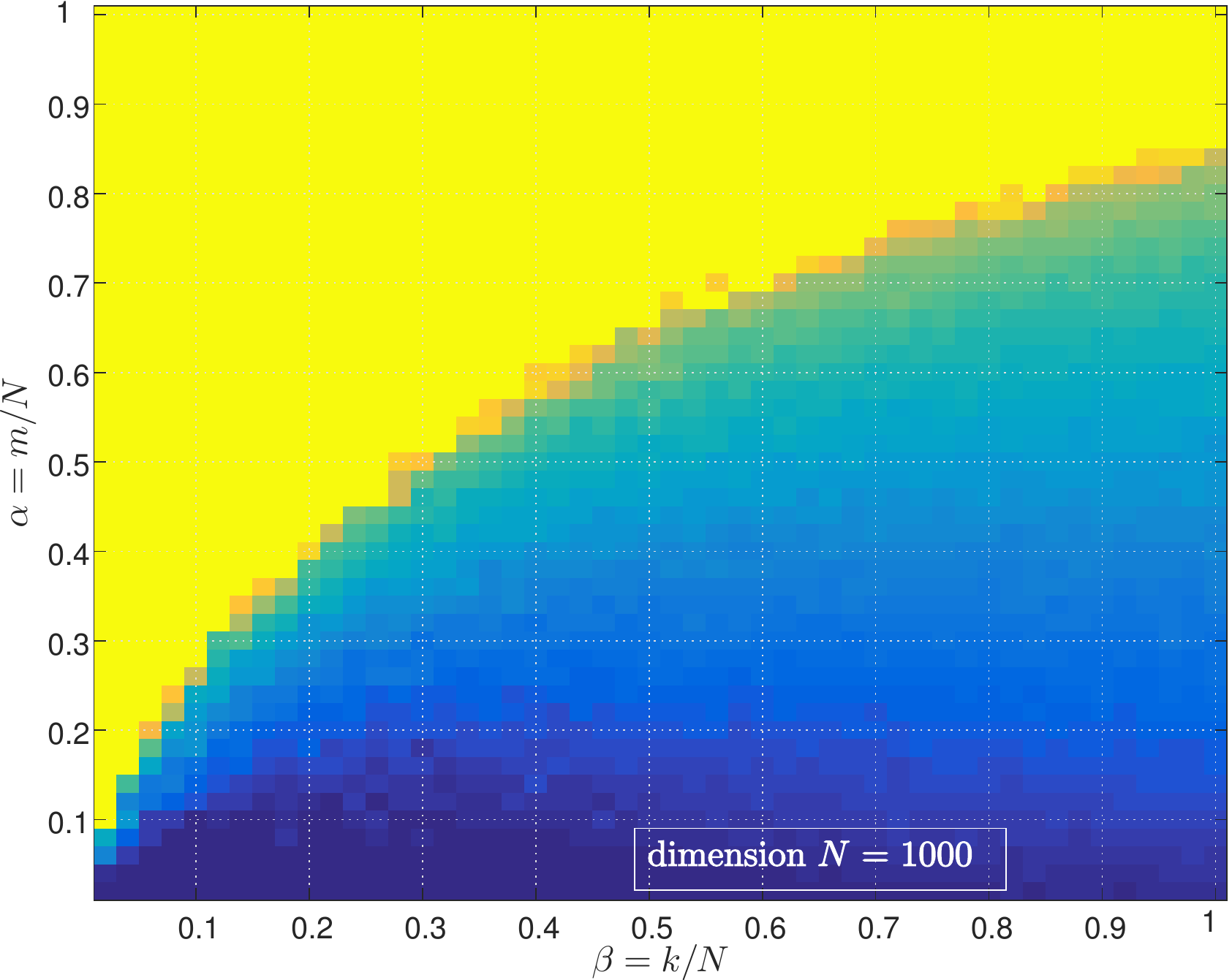}
\includegraphics[scale=0.3]{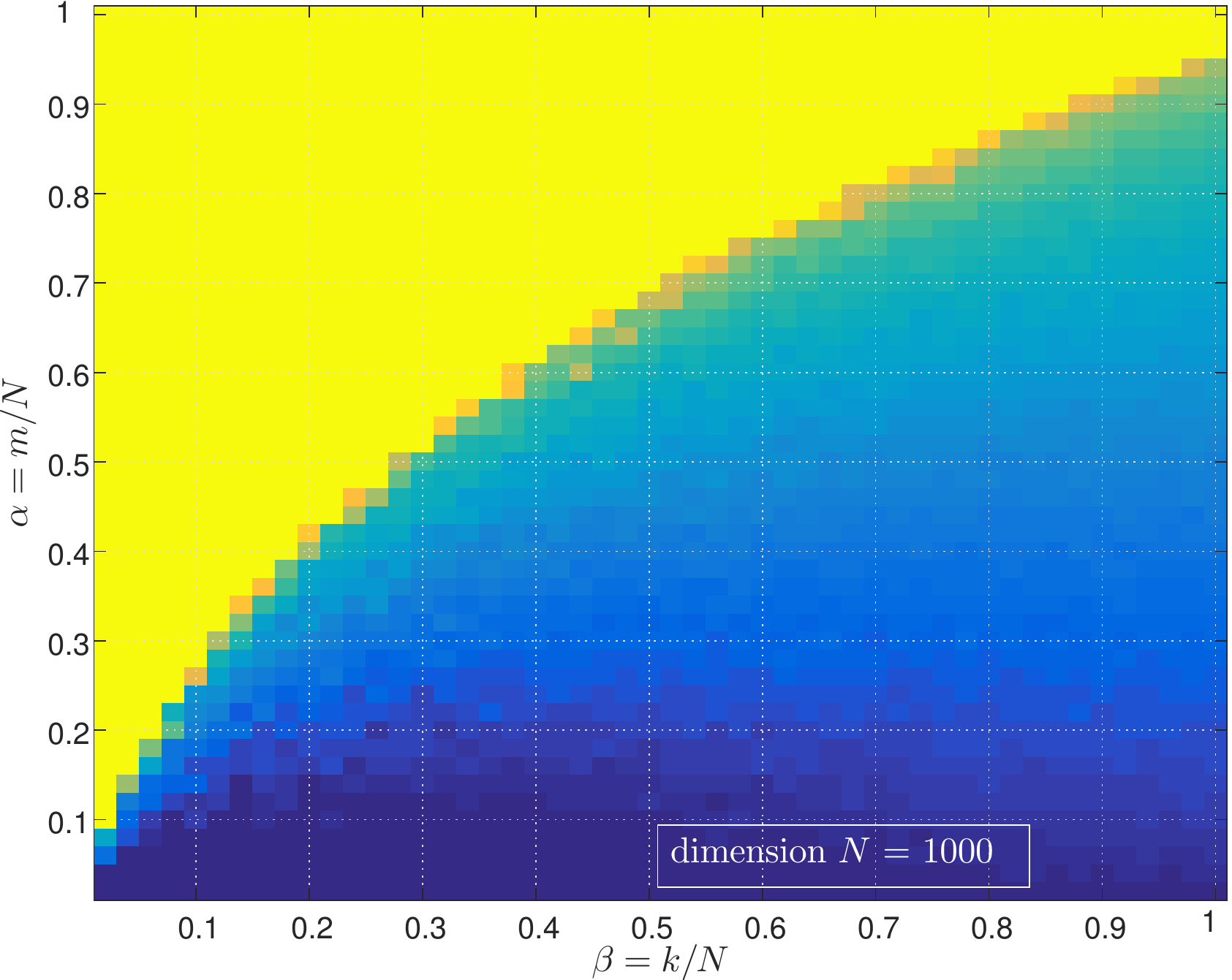}
\put(-226,-15){(d)}
\put(-73,-15){(e)}
\caption{Reconstruction error of the convex program \protect\eqref{PUF} for unipolar ternary vectors depending on the number of measurements $m$, the support size $k$ and the relative size of $k_1$ to $k$. We have chosen the ration $k_1/k$ in (a) as $k_1/k=0.1$, in (b) as $k_1/k=0.3$, in (c) as $k_1/k=0.5$, in (d) as $k_1/k=0.7$ and in (e) as $k_1/k=0.9$.}\label{fig:NumTer}
\end{center}
\end{figure}

We further tested the robustness of our algorithms numerically. For this purpose, we chose a noise level $\eta=0.3$, i.e., we performed
\eqref{PbinDR} with noisy measurements $b=Ax+e$ and $\|x\|_2\le0.3$ to check robustness. We chose as ambient dimension $N=500$. We then draw for each sparsity level
$k\in \{10,20, \dots, 500\}$ and number of measurements $m\in \{10,20, \dots, 500\}$, a set $K$ of size $k$ uniformly at random as well
as a Gaussian matrix $A\in \R^{m \times N}$, and constructed $x_0$ as in the noiseless case. Figure \ref{fig:NumDen} shows the results and particularly illustrates that basis pursuit with box constraints is robust to noise consider additionally inequality constraints. Note that as indicated by the theoretical results (c.f. Theorems \ref{thm:PhaseTransNoise} and \ref{thm:SBBPD1}), we might choose $\eta$ smaller in smaller ambient dimension to get comparable result. Thus, in dimension $N=1000$, we might have seen that less measurements are necessary for \eqref{PbinDR} and for \eqref{SBBPDenoising} to succeed in the case of the same noise level $\eta=0.3$. 
\begin{figure}[ht]
\begin{center}
\includegraphics[scale=0.37]{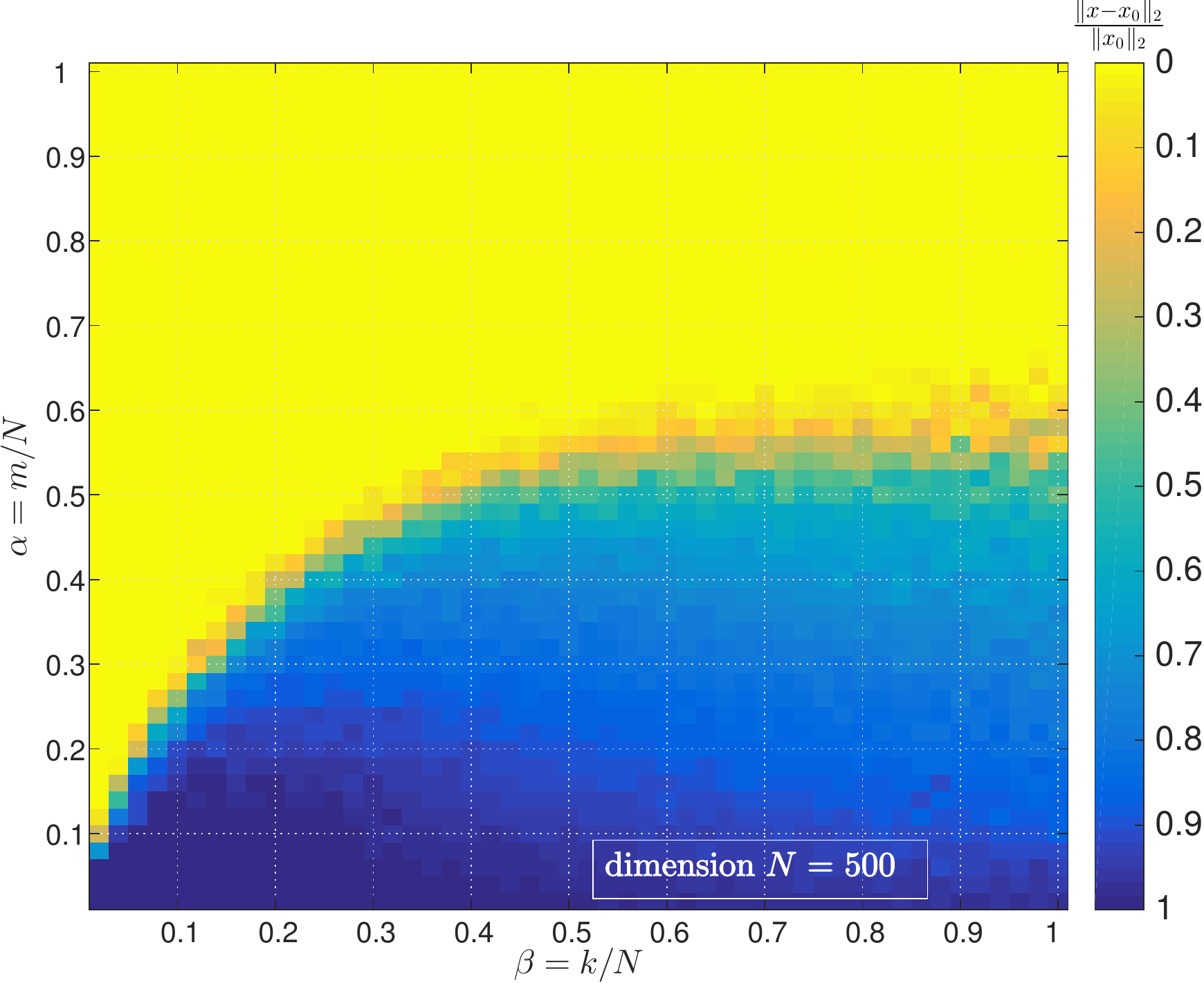}\hspace*{1cm}\includegraphics[scale=0.37]{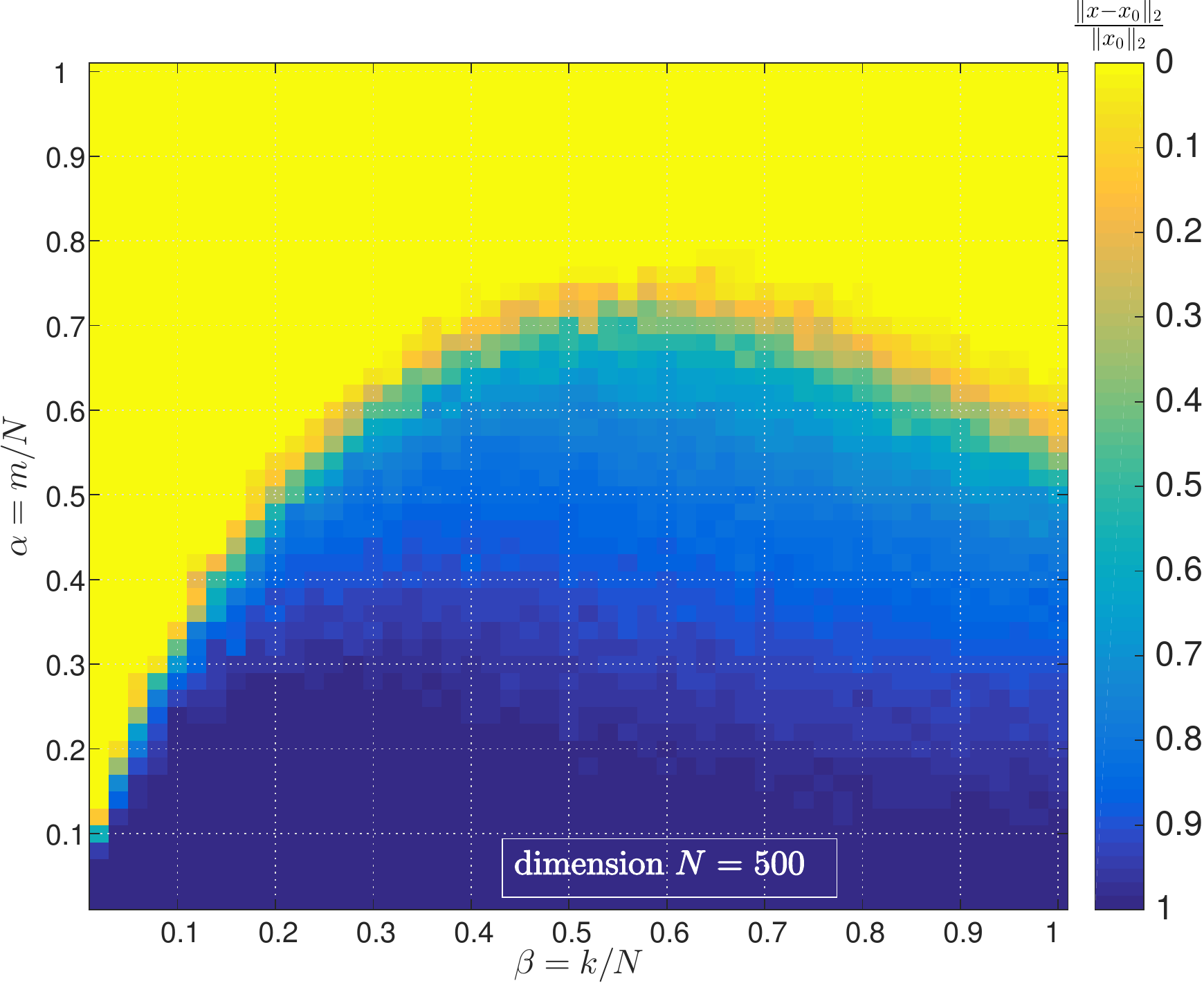}
\put(-310,-15){(a)}
\put(-96,-15){(b)}
\caption{Reconstruction error of the convex programs \protect\eqref{PbinDR} for unipolar binary vectors (see (a)) and of \protect\eqref{SBBPDenoising} for bipolar ternary
vectors (see (b)) depending on the number of measurements $m$ and the support size $k$ with an measurement error $e$ of $\|e\|_2\le 0.3$.}\label{fig:NumDen}
\end{center}
\end{figure}

\subsection*{Acknowledgements}

G.~Kutyniok acknowledges partial support by the Einstein Foundation Berlin, the Einstein Center for Mathematics Berlin (ECMath), the
European Commission-Project DEDALE (contract no. 665044) within the H2020 Framework Program, DFG Grant KU 1446/18, DFG-SPP
1798 Grants KU 1446/21 and KU 1446/23, the DFG Collaborative Research Center TRR 109 Discretization in Geometry and Dynamics,
and by the DFG Research Center {\sc Matheon} `Mathematics for Key Technologies' in Berlin.
S.~Keiper acknowledges support by the DFG Grant 1446/18.
G.E.~Pfander and D.G.~Lee acknowledge support by the DFG Grant PF 450/6 and would like to thank Tianlin Liu, Palina Salanevich and Weiqi Zhou for helpful discussions.

\begin{appendix}

\section{Proofs}

\subsection{Proof of Proposition \ref{prop:classicL1}} \label{sec:app0}

\begin{proof}That
(ii) follows from (i) is immediate.

To prove (ii) $\Rightarrow$ (i), let us assume that $\mathds{1}_K$ is the unique solution of \eqref{P+}. We will show that in this
case $A$ fulfills the NSP+ (cf. definition in Theorem \ref{thm:Uniform}~(iii)). Since it is known from \cite{Stojnic+} that this condition is sufficient to uniquely recover every positive-valued signal supported on $K$, the statement (i) follows. Thus we need to prove that for  $u\in \ker(A)\cap\{w\in \R^N:w_i\ge0 \quad \text{for} \quad i\in K^C\}$, we have $\sum_{i}^Nu_i>0$.

 The scaling invariance of this NSP condition allows us to assume $\|u\|_{\infty}\le 1$. It then follows that $\mathds{1}_K+u$ is feasible for \eqref{P+} and therefore $|K|=\|\mathds{1}_K\|_1< \|\mathds{1}_K+u\|_1=|K|+\sum_{i=1}^Nu_i$. Hence, $\sum_{i=1}^Nu_i> 0$, which concludes the proof.
\end{proof}

\subsection{Proof of Theorem \ref{thm:SBBPD1}} \label{sec:app1}

Before proving Theorem \ref{thm:SBBPD1}, we require the following lemma, which provides some equivalent properties
of the RBT-NSP. 

\begin{lemma}\label{lem:a}
Let $A \in \R^{m \times N}$, $K \subset [N]$, $0 < \rho < 1$, and $\tau > 0$. Then the following conditions are equivalent:
\begin{itemize}
\item[(i)]
$A$ satisfies the RBT-NSP with $0 < \rho < 1$ and $\tau > 0$ relative to $K$, that is, \eqref{RSBNSP} holds.
\item[(ii)]
For any $z \in \R^{N}$ with $z\in [-1,1]^N$,
\[  |K| -\sum_{i\in K_{1}}z_i+\sum_{i\in K_{-1}}z_i \le \rho \|z_{K^C}\|_1+ \tau\|A(z - x_\pm) \|_2 .
\]
\end{itemize}
\end{lemma}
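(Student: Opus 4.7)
The plan is to establish the equivalence via the affine substitution $v = z - x_{\pm1}$, in direct parallel with the proof of Lemma \ref{thm_robust_NSP_z_between_0_alpha}. The componentwise bookkeeping is that since $(x_{\pm1})_i = 1$ for $i \in K_1$, $(x_{\pm1})_i = -1$ for $i \in K_{-1}$, and $(x_{\pm1})_i = 0$ for $i \in K^C$, the box constraint $z \in [-1,1]^N$ translates into $v_i \in [-2,0]$ on $K_1$, $v_i \in [0,2]$ on $K_{-1}$, and $v_i \in [-1,1]$ on $K^C$. In particular every such $v$ lies in the cone $H_{K_1,K_{-1}}$, and conversely any $v$ in this cone can, after sufficient downward scaling, be written as $z - x_{\pm1}$ for some $z$ in the box. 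Both sides of the RBT-NSP inequality are positively homogeneous of degree one in $v$, so any such scaling is harmless.

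For (i) $\Rightarrow$ (ii), I would fix $z \in [-1,1]^N$, set $v := z - x_{\pm1}$, and apply RBT-NSP to $v$. The left-hand side rearranges as
\begin{align}
\sum_{i \in K_{-1}} v_i - \sum_{i \in K_1} v_i
= \sum_{i \in K_{-1}} (z_i+1) - \sum_{i \in K_1} (z_i-1)
= |K_{-1}|+|K_1| - \sum_{i \in K_1} z_i + \sum_{i \in K_{-1}} z_i,
\end{align}
which equals $|K| - \sum_{i\in K_1} z_i + \sum_{i\in K_{-1}} z_i$ since $K_1$ and $K_{-1}$ are disjoint with $K = K_1 \cup K_{-1}$. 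Combining this with $\|v_{K^C}\|_1 = \|z_{K^C}\|_1$ (because $x_{\pm1}$ vanishes on $K^C$) and $Av = A(z - x_{\pm1})$ gives (ii).

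For (ii) $\Rightarrow$ (i), I would take a nonzero $v \in H_{K_1,K_{-1}} \cap \ker(A)$ and choose $t = 1/\max(\|v\|_\infty,1)$, so that $z := x_{\pm1} + tv$ lies in $[-1,1]^N$ by the componentwise analysis above. Applying (ii) to this $z$ and running the same substitution in reverse yields
\begin{align}
t\Big(\sum_{i \in K_{-1}} v_i - \sum_{i \in K_1} v_i\Big) \le t\rho\,\|v_{K^C}\|_1 + \tau\|A(tv)\|_2,
\end{align}
and dividing by $t$, together with $Av = 0$, recovers the RBT-NSP inequality. The only real step is the scaling to land inside the box; once that is done the rest reduces to the algebraic identity above, so I do not anticipate any serious obstacle.
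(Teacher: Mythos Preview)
Your proof is correct and follows essentially the same route as the paper: apply the substitution $v = z - x_{\pm1}$ for (i) $\Rightarrow$ (ii), and for (ii) $\Rightarrow$ (i) scale a given $v \in H_{K_1,K_{-1}}$ so that $x_{\pm1} + tv$ lands in the box, then divide out $t$. Your explicit choice $t = 1/\max(\|v\|_\infty,1)$ is slightly more concrete than the paper's ``for sufficiently small $t>0$'', and your final appeal to $Av=0$ is unnecessary since the $\tau\|Av\|_2$ term already scales correctly under $t$ (the paper in fact proves the inequality for all $v \in H_{K_1,K_{-1}}$, not just those in $\ker(A)$).
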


\begin{proof}
(i) $\Rightarrow$ (ii): This follows by applying the condition for the (RBT-NSP) to $v = z - x_\pm$.

(ii) $\Rightarrow$ (i): Let $v \in H_{K_1,K_{-1}} \setminus \{ 0 \}$. (If $v = 0$, then \eqref{RSBNSP} holds trivially.)
For sufficiently small $t > 0$, $z^t= x_\pm + tv$ satisfies $z^t\in [-1,1]^N$. Thus, by (ii), we obtain that
\[
 |K|-\sum_{i\in K_{1}}z^t_i+\sum_{i\in K_{-1}}z^t_i\le \rho\|z^t_{K^C}\|+ \tau \| A (z^t-x_\pm) \|_2
 \]
is equivalent to
\[
 |K|-\sum_{i\in K_{1}}(1+tv_i)+\sum_{i\in K_{-1}}(-1+tv_i)\le \rho\|tv_{K^C}\|+ \tau \| A (tv) \|_2.
\]
But this holds if and only if
\begin{align}
t\sum_{i\in K_{-1}}v_i-t\sum_{i\in K_{1}}v_i \le \rho t \sum_{i\in K^C} |v_i|+t\tau \|Av\|_2,
\end{align}
which is equivalent to the desired inequality in (ii).
\end{proof}

\begin{proof}[Proof of Theorem \ref{thm:SBBPD1}]
Let $z \in \R^{N}$ be such that $z\in [-1,1]$. Then, by Lemma \ref{lem:a}, 
\begin{align}
|K| -\sum_{i\in K_{1}}z_i+\sum_{i\in K_{-1}}z_i \le \rho \|z_{K^C}\|_1+ \tau\|A(z - x_\pm) \|_2.
\end{align}
Hence, 
\begin{align}
\|(z - x_\pm)_K\|_1&= \sum_{i\in K_{1}}(1-z_i)-\sum_{i\in K_{-1}}(-1-z_i)=|K|-\sum_{i\in K_{1}}z_i+\sum_{i\in K_{-1}}z_i\\&\le \rho \| z_{K^C} \|_1 + \tau \| A (z - x_\pm) \|_2.\label{App1}
\end{align}
We also have
\begin{align}
|K| -\sum_{i\in K_{1}}z_i+\sum_{i\in K_{-1}}z_i- \|z_{K^C}\|_1 \le (\rho-1) \|z_{K^C}\|_1+ \tau\|A(z - x_\pm) \|_2,\end{align}
which yields
\begin{align}
\|x_\pm\|_1-\|z\|_1\le (\rho-1) \|z_{K^C}\|_1+ \tau\|A(z - x_\pm) \|_2.
\end{align}
But this is in turn equivalent to
\begin{align}\label{App2}
\| z_{K^C} \|_1\le \frac{1}{ 1- \rho } \left( \|z \|_1 -\|x_\pm \|_1  + \tau \| A(z -x_\pm )\|_2 \right) .
\end{align}
From \eqref{App1} and \eqref{App2}, we can deduce
\begin{align}
\|z - x_\pm \|_1 &= \| (z - x_\pm)_{K} \|_1 + \| z_{K^C} \|_1 \\
&\le \rho \| z_{K^C} \|_1 + \tau \| A (z - x_\pm) \|_2 +\| z_{K^C} \|_1\\
&=(1+\rho) \| z_{K^C} \|_1+\tau \| A (z - x_\pm)\\
&\le \frac{1+\rho}{1-\rho} \left( \|z \|_1 -\|x_\pm \|_1  + \tau \| A(z -x_\pm )\|_2 \right)+\tau \| A (z - x_\pm)\|_2\\
&=\frac{1+\rho}{1-\rho} \left(\|z \|_1 -\|x_\pm \|_1\right)+\frac{2\tau}{1-\rho}\| A (z - x_\pm)\|_2,
\end{align}
where \eqref{App1} was applied in the second step and \eqref{App2} in fourth step. Now if $z = \hat{z}$ is a minimizer of 
\eqref{SBBPDenoising}, then $\| \hat{z} \|_1 \le \| \mathds{1}_K \|_1$, which implies
\begin{align}
\| \hat{z} -  \mathds{1}_K \|_1 \le \frac{2 \tau }{ 1- \rho } \| A (\hat{z} -  x_\pm) \|_2 .
\end{align}
Finally, noticing $\| A (\hat{z} - x_\pm) \|_2 \le 2 \eta$ provides the desired error bound.
\end{proof}

\subsection{Proof of Lemma \ref{lem:subUF}}\label{subsecAStat1}
We will only prove Equation \eqref{eq:subUF}, the remaining is straightforward.
First notice that the subdifferential of $f$ defined in Equation \eqref{fUF} simplifies to
\begin{align}
\partial f(\hat{x})=\{s\in \R^N: \|y\|_1\ge \|\hat{x}\|_1 + \langle s, y-\hat{x} \rangle \text{ for all } y\in [0,L]^N\},
\end{align}
since for given $y\notin [0,L]^N$, the inequality $f(y)\ge f(\hat{x}) + \langle s, y-\hat{x}\rangle$ is vacuously true for all $s\in\R^N$.
Thus
\begin{align}
\partial f(\hat{x}) &=\{s\in \R^N: \sum_{i=1}^N (1-s_i) y_i + \sum_{i\in K}(s_i-1) \hat{x}_i\geq0,  y\in [0,L]^N \}.
\end{align}
Next, for the sake of brevity, define
\[
S := \{s\in \R^N: s_i \ge 1 \text{ for } i\in K_L,s_i=1 \text{ for } i\in K\setminus K_L \text{ and } s_i\le 1 \text{ for } i\in K^C \}.
\]
To prove $S \subseteq \partial f(\hat{x})$, let $s\in S$ and observe
\begin{align}
\sum_{i=1}^N (1-s_i) y_i + \sum_{i\in K}(s_i-1)\hat{x}_i
&=\sum_{i\in K_L}(1-s_i)y_i + \sum_{i\in K^C}(1-s_i)y_i+\sum_{i\in K_L}(s_i-1)L \\
&= \sum_{i\in K_L}(1-s_i)(y_i-L) + \sum_{i\in K^C}(1-s_i)y_i \\
&\ge 0 +\sum_{i\in K^C}(1-s_i)y_i\\
&\ge0
\end{align}
where we used in the first step that $(1-s_i)= 0$ for $i\in K\setminus K_L$ and $\hat{x}_i=l$ for $i\in K_L$, in the third step that $L\ge y_i$ and $s_i\ge1$ for $i\in K_L$ and in the last step that $s_i\le 1$ for $i\in K^C$.
Thus, indeed, $S \subseteq \partial f(\mathds{1}_K)$.

To prove  the other inclusion, i.e., $\partial f(\hat{x}) \subseteq S$, we consider the following points $y^j\in [0,L]^N$ for $j\in [N]$:
\begin{align}
y^j= \left\{
   \begin{array}{ll} \hat{x} - \hat{x}_je_j:  \quad &  j \in K,\\[3pt]
                     \hat{x} + e_j: \quad & j\in K^C
   \end{array}
   \right.
\end{align}
and for $j\in K\setminus K_L$ additionally the points:
\begin{align}
z^j= \hat{x}+(L-\hat{x}_j)e_j.
\end{align}
Since $y^j\in [0,L]^N$ for all $j$, every $s \in \partial f(\hat{x})$ must satisfy
\begin{align}\label{subUF}
0\le \sum_{i=1}^N (1-s_i) y^j_i + \sum_{i\in K}(s_i-1)\hat{x}_i, \quad j=1,\dots, N.
\end{align}
For $j\in K$, this inequality is equivalent to
\begin{align}
0\le (s_j-1)\hat{x}_j,
\end{align}
which yields equivalently $s_j \geq 1$. And for $j\in K^C$, we have
\begin{align}
0\le (1-s_j),
\end{align}
which is equivalent to $s_j \le 1$. 
Additionally for $j\in K\setminus K_L$ replacing $y^j$ with $z^j\in [0,L]^N$ in \eqref{subUF} yields
\begin{align}
 0\le(1-s_j)(L-\hat{x}_j),
\end{align}
and, hence, $s_j\le1$, because $\hat{x}_j<L$ for $j\in K\setminus K_L$. But we have already seen that $s_j\ge1$ for $j\in K$ and, hence, it needs to hold $s_j=1$ for $j\in K\setminus K_L$.
This implies $\partial f(\hat{x}) \subseteq S$. The claim is proven.

\subsection{Proof of Lemma \ref{lem:subBF}}\label{subsecAStat2}
To  prove Lemma \ref{lem:subBF} we first observe that the subdifferential simplifies to
\begin{align}
\partial f(x_0)=\{s\in \R^N: \|y\|_1\ge \|x_0\|_1 + \langle s, y-x_0 \rangle \text{ for all } y\in [-L_1,L_2]^N\},
\end{align}
where $x_0$ is defined as in Equation \eqref{Not:1} and $f$ as in Equation \eqref{fBF}.
Thus
\begin{align}
\partial f(x_0) &=\{s\in \R^N: \sum_{i=1}^N|y_i|-\sum_{i=1}^N s_i y_i + \sum_{i\in K}s_ix_{0,i}- \sum_{i\in K}|x_{0,i}|\geq0,  y\in [-L_1,L_2]^N \}.
\end{align}
Next, for the sake of brevity, define
\begin{align}
S := \left\{s\in \R^N: s_i \ge 1 \text{ for } i\in K_{L_2}, s_i=1 \text{ for } i\in \hat{K}^+,
|s_i|\le 1 \text{ for } i\in K^C, \right.\\ \left. s_i=-1 \text{ for }i\in \hat{K}^- \text{ and } s_i\le -1 \text{ for } i\in K_{-L_1}\right\},
\end{align}
where $\hat{K}^+=\bigcup_{i=1}^{L_2-1}K_i$ and $\hat{K}^-=\bigcup_{i=-L_1+1}^{-1}K_i$.
To prove $S \subseteq \partial f(x_0)$, let $s\in S$ and show that for all $y\in [-L_1,L_2]^N$ it holds
\begin{align}\label{sumBF}
0\le \sum_{i=1}^N|y_i|-\sum_{i=1}^N s_i y_i + \sum_{i\in K}s_ix_{0,i}- \sum_{i\in K}|x_{0,i}| =\sum_{i=1}^Na_i,
\end{align}
where $a_i:=|y_i|-s_i y_i + s_ix_{0,i}-|x_{0,i}|$ for $i\in [N]$. We will prove that $a_i\ge 0$ for all $i\in [N]$.

For $i\in K_{L_2}$ and $y_i\le 0$ we have
    \begin{align}
     a_i=|y_i|-s_iy_i+(s_i-1)L_2=-(1+s_i)y_i+(s_i-1)L_2\ge 0,
    \end{align}
because $s_i\ge 1$. For $i\in K_{L_2}$ and $y_i\ge 0$ it follows
\begin{align}
a_i=(s_i-1)(L_2-y_i)\ge0, 
\end{align}
because $s_i\ge 1$ and $y_i\le L_2$.
For $i\in\hat{K}^+$ we have
\begin{align}
 a_i=|y_i|-y_i+x_{0,i}- x_{0,i}=|y_i|-y_i\ge0,
\end{align}
where we used that $s_i=1$ and $x_{0,i}\ge0$.
For $i\in K^C$ we have
\begin{align}
 a_i=|y_i|-s_iy_i\ge0,
\end{align}
because $x_{0,i}=0$ and $|s_i|\le1$.
For $i\in\hat{K}^-$ we have
\begin{align}
 a_i=|y_i|+y_i-x_{0,i}- x_{0,i}=|y_i|+y_i\ge0,
\end{align}
because $s_i=-1$ and $x_{0,i}\le0$.
For $i\in K_{-L_1}$ and $y_i\le0$ we have
\begin{align}
     a_i=|y_i|-s_iy_i-s_iL_1-L_1=-(1+s_i)y_i-(s_i+1)L_1=-(1+s_i)(y_i+L_1)\ge0,
    \end{align}
because $s_i\le -1$ and $y_i\ge -L_1$. For $i\in K_{-L_1}$ and $y_i\ge 0$ it follows
\begin{align}
a_i=(1-s_i)y_i-(s_i+1)L_1\ge0,
\end{align}
because $s_i\le -1$.
Thus it is proven that $\sum_{i=1}^Na_i\ge 0$ and therefore $s\in \partial f(x_0)$.

To prove  the other inclusion, i.e., $\partial f(x_0) \subseteq S$, we consider the following points $y^j,zj\in [-L_1,L_2]^N$ for $j\in [N]$ 
\begin{align}
y^j= \left\{
   \begin{array}{ll} x_0 - x_{0,j}e_j  \quad & \text{for } j\in K,\\[3pt]
                     x_0 + e_j \quad &\text{for }j\in K^C,
   \end{array}
   \right. \quad \text{and} \quad
z^j= \left\{
   \begin{array}{ll} x_0 - (x_{0,j}- L_2)e_j  \quad & \text{for } j\in K^+,\\[3pt]
                     x_0 - e_j \quad &\text{for }j\in K^C,\\[3pt]
                     x_0 - (x_{0,j}+ L_1)e_j  \quad &\text{for }  j\in K^-,
   \end{array}
   \right.
\end{align}
where $K^+=\hat{K}^+\cup K_{L_{2}}$ and $K^-=\hat{K}^-\cup K_{-L_{1}}$. Note that $z^j=x_0$ for $j\in K_{L_2}\cup K_{-L_1}$.
We now substitute $y$ in Equation \eqref{sumBF} with these vectors. It is then easy to see that $s\in S$.
This implies $\partial f(\hat{x}) \subseteq S$ and the claim is proven.

\end{appendix}

\nocite{*}
\bibliographystyle{amsplain}
\bibliography{D-NSP_v3.4.bib}

\end{document}